\title[Pieri formula]{Pieri-type multiplication formula for quantum Grothendieck polynomials}
\author{Satoshi Naito and Daisuke Sagaki}
\address[Satoshi Naito]{Department of Mathematics, Tokyo Institute of Technology, 
2-12-1 Oh-okayama, Meguro-ku, Tokyo 152-8551, Japan.}
\email{naito.s.ac@m.titech.ac.jp}
\address[Daisuke Sagaki]{Department of Mathematics, 
Faculty of Pure and Applied Sciences, University of Tsukuba, 
1-1-1 Tennodai, Tsukuba, Ibaraki 305-8571, Japan.}
\email{sagaki@math.tsukuba.ac.jp}
\keywords{quantum Grothendieck polynomials, quantum $K$-theory, Pieri formula, quantum Bruhat graph. \newline
Mathematics Subject Classification 2020: 
Primary 05E05, 05E14; 
Secondary 14M15, 14N35, 14N15}
\numberwithin{equation}{section}
\newcommand{\BZ}{\mathbb{Z}}
\newcommand{\BA}{\mathbb{A}}
\newcommand{\BB}{\mathbb{B}}
\newcommand{\BD}{\mathbb{D}}
\newcommand{\CS}{\mathcal{S}}
\newcommand{\FG}{\mathfrak{G}}
\newcommand{\FS}{\mathfrak{S}}
\newcommand{\SL}{\mathsf{L}}
\newcommand{\SP}{\mathsf{P}}
\newcommand{\SM}{\mathsf{M}}
\newcommand{\RA}{\mathrm{A}}
\newcommand{\RB}{\mathrm{B}}
\newcommand{\RC}{\mathrm{C}}
\newcommand{\RD}{\mathrm{D}}
\newcommand{\RE}{\mathrm{E}}
\newcommand{\RF}{\mathrm{F}}
\newcommand{\RG}{\mathrm{G}}
\newcommand{\RR}{\mathrm{R}}
\newcommand{\RS}{\mathrm{S}}
\newcommand{\RT}{\mathrm{T}}
\newcommand{\RU}{\mathrm{U}}
\newcommand{\RX}{\mathrm{X}}
\newcommand{\RY}{\mathrm{Y}}
\newcommand{\CRA}{n_{(k-1,k)}=0}
\newcommand{\CRB}{n_{(k-1,k)}=1}
\newcommand{\CRBa}{ \begin{subarray}{l} n_{(k-1,k)}=1 \\ (k-1,k) \not\in M \end{subarray} }
\newcommand{\CRBb}{ \begin{subarray}{l} (k-1,k) \in M \\ \kappa = (k-1,k) \end{subarray} }
\newcommand{\CRBc}{ \begin{subarray}{l} (k-1,k) \in M \\ \kappa \ne (k-1,k) \end{subarray} }
\newcommand{\CRBca}{
\begin{subarray}{c} (k-1,k) \in M \\ 
\kappa \ne (k-1,k) \\ 
\kappa \notin M \\ \end{subarray}}
\newcommand{\CRBcb}{
\begin{subarray}{c} (k-1,k) \in M \\ 
\kappa \ne (k-1,k) \\ 
\kappa \in M \\ \end{subarray}}
\newcommand{\CRC}{n_{(*,k-1)} = 0}
\newcommand{\CRD}{n_{(*,k-1)} \ge 1}
\newcommand{\CRAa}{\bp_{(*,k)} = \emptyset}
\newcommand{\CRAb}{\bp_{(*,k)} \ne \emptyset,\,\kappa \not\in M}
\newcommand{\CRAc}{\bp_{(*,k)} \ne \emptyset,\,\kappa \in M}
\newcommand{\CRFa}{\kappa' \in M}
\newcommand{\CRFb}{\kappa' \not\in M}
\newcommand{\CRR}{n_{(k,*)}=0}
\newcommand{\CRS}{n_{(k,*)} \ge 1}
\newcommand{\CRSa}{(k,b(\bullet)) \in M}
\newcommand{\CRSb}{(k,b(\bullet)) \not\in M}
\newcommand{\CRSaba}{\kappa'' \in M}
\newcommand{\CRSabb}{\kappa'' \not\in M}
\newcommand{\CRX}{\iota = (k-1,k)}
\newcommand{\CRY}{\iota \ne (k-1,k)}
\newcommand{\SB}{\mathsf{B}}
\newcommand{\SQ}{\mathsf{Q}}
\newcommand{\bm}{\mathbf{m}}
\newcommand{\bp}{\mathbf{p}}
\newcommand{\bq}{\mathbf{q}}
\newcommand{\bs}{\mathbf{s}}
\newcommand{\vp}{\varphi}
\newcommand{\K}{\mathsf{K}}
\newcommand{\Q}{Q}
\newcommand{\g}{g}
\newcommand{\h}{h}
\newcommand{\hspm}[2]{\mathbb{P}\mathsf{M}^{#1}_{#2}(w)}
\newcommand{\spm}[2]{\mathsf{PM}^{#1}_{#2}(w)}
\newcommand{\hsp}[2]{\mathbb{P}^{#1}_{#2}(w)}
\newcommand{\sfp}[2]{\mathsf{P}^{#1}_{#2}(w)}
\newcommand{\hspmw}[2]{\mathbb{P}\mathsf{M}^{#1}_{#2}}
\newcommand{\spmw}[2]{\mathsf{PM}^{#1}_{#2}}
\newcommand{\hspw}[2]{\mathbb{P}^{#1}_{#2}}
\newcommand{\sfpw}[2]{\mathsf{P}^{#1}_{#2}}
\newcommand{\ip}{\sigma}
\newcommand{\jp}{\tau}
\newcommand{\ta}{2\mathrm{a}}
\newcommand{\tb}{2\mathrm{b}}
\newcommand{\G}[2]{G^{#1}_{#2}}
\newcommand{\bF}[3]{\mathbf{F}^{#1}_{#2}(#3)}
\newcommand{\bE}[3]{\mathbf{E}^{#1}_{#2}(#3)}
\newcommand{\CSF}[2]{\mathcal{S}^{#1}_{#2}}
\newcommand{\CSE}[2]{\mathcal{R}^{#1}_{#2}}
\DeclareMathOperator{\ed}{end}
\newcommand{\QBG}{\mathrm{QBG}}
\newcommand{\Mark}{\mathrm{Mark}}
\newcommand{\lng}{w_{\circ}}
\newcommand{\edge}[1]{ \xrightarrow{\hspace{2pt}#1\hspace{2pt}} }
\newcommand{\ha}[1]{\widehat{#1}}
\newcommand{\ol}[1]{\overline{#1}}
\theoremstyle{plain}
\newtheorem{lem}{Lemma}[section]
\newtheorem{prop}[lem]{Proposition}
\newtheorem{thm}[lem]{Theorem}
\newtheorem{cor}[lem]{Corollary}
\newtheorem{ithm}{Theorem}
\newtheorem{clm}{Claim}[subsection]
\theoremstyle{definition}
\newtheorem{dfn}[lem]{Definition}
\theoremstyle{remark}
\newtheorem{ex}[lem]{Example}
\newtheorem{rem}[lem]{Remark}
\newcommand{\bqed}{\quad \hbox{\rule[-0.5pt]{3pt}{8pt}}}
\newenvironment{enu}{%
 \begin{enumerate}%
}{\end{enumerate}}
\begin{document}


%
\begin{abstract}
The purpose of this paper is to prove a Pieri-type multiplication formula 
for quantum Grothendieck polynomials, which was conjectured by Lenart-Maeno. 
This formula would enable us to compute explicitly 
the quantum product of two arbitrary (opposite) Schubert classes in the (small) quantum $K$-theory ring $QK(Fl_{n})$ of the (full) flag manifold $Fl_{n}$ of type $A_{n-1}$ on the basis of the fact that quantum Grothendieck polynomials represent (opposite) Schubert classes in $QK(Fl_{n})$.
\end{abstract}

\maketitle

%
\section{Introduction.} 
\label{sec:intro}
In the seminal paper \cite{LM}, 
the authors defined and studied quantum Grothendieck polynomials, 
which are a common generalization of Grothendieck polynomials and quantum Schubert polynomials; 
Grothendieck polynomials, introduced in \cite{LaS3}, are 
polynomial representatives for (opposite) Schubert classes in the $K$-theory $K(Fl_{n})$ of 
the (full) flag manifold $Fl_{n}$ of type $A_{n-1}$, and quantum Schubert polynomials, 
introduced in \cite{FGP}, represent the corresponding (opposite) 
Schubert classes 
in the (small) quantum cohomology 
$QH^{*}(Fl_{n}) := H^{*}(Fl_{n}) \otimes \BZ[\Q_{1}, \dots, \Q_{n-1}]$. 
They defined quantum Grothendieck polynomials as the images of 
Grothendieck polynomials under a certain $K$-theoretic ``quantization map'', 
which is based on the (conjectural) presentation of the (small) quantum $K$-theory ring $QK(Fl_{n})$ 
(defined in \cite{Giv} and \cite{Lee}) of $Fl_{n}$ given by Kirillov-Maeno 
(see \cite[Theorem~6.1]{MNS1} for the modified presentation of the 
torus-equivariant version of $QK(Fl_{n})$
, for which the formal power series ring $\BZ[\![\Q]\!] := \BZ[\![\Q_{1}, \ldots, \Q_{n-1}]\!]$ is used instead of the polynomial ring $\BZ[\Q_{1}, \ldots, \Q_{n-1}]$ as a base ring), 
and furthermore 
obtained a Monk-type multiplication formula (\cite[Theorem~6.4]{LM}) 
for quantum Grothendieck polynomials, which is expressed in terms of 
directed paths in the quantum Bruhat graph on the infinite symmetric group. 
Also, they conjectured (\cite[Conjecture~7.1]{LM}) that 
quantum Grothendieck polynomials represent (opposite) Schubert classes 
in the quantum $K$-theory ring $QK(Fl_{n})$ under the (conjectural) presentation of $QK(Fl_{n})$ by Kirillov-Maeno. 

In the joint paper \cite{LNS} with C.~Lenart, based on the works \cite{Kat1} and \cite{Kat2}, 
we proved a (torus-equivariant version of) Monk-type multiplication formula (\cite[Theorem~49]{LNS}) for (opposite) Schubert classes in $QK(Fl_{n})$, 
which is exactly of the same form as the one (\cite[Theorem~6.4]{LM}) for quantum Grothendieck polynomials. 
Since the quantum multiplicative structure of $QK(Fl_{n})$ is completely determined 
by a Monk-type multiplication formula 
(if we use the formal power series ring $\BZ[\![\Q]\!]$ as a base ring), 
which describes the quantum product 
with divisor classes, it follows that the conjecture (\cite[Conjecture~7.1]{LM}) by Lenart-Maeno holds true, i.e., 
that quantum Grothendieck polynomials indeed represent 
(opposite) Schubert classes in $QK(Fl_{n})$ 
(for the precise statement and its proof, see \cite[Theorem~51]{LNS}); 
see also \cite[Theorem~4.4]{MNS2}, which states that quantum double 
Grothendieck polynomials represent (opposite) Schubert 
classes in the torus-equivariant version of $QK(Fl_{n})$. 

The purpose of this paper is to prove another conjecture (\cite[Conjecture~6.7]{LM}) 
presented by Lenart-Maeno, that is, a Pieri-type multiplication formula 
for quantum Grothendieck polynomials. This formula is much more complicated than the Monk-type multiplication formula, and is a vast generalization of it; 
by specializing the quantum parameters $\Q_{1},\Q_{2},\dots$ at zero, 
we recover the classical Pieri-type multiplication formula 
for Grothendieck polynomials, which was obtained in \cite{LS}. 
Let us explain our result more precisely. 
We set $\BZ[\Q] := \BZ[\Q_{1},\Q_{2},\dots]$, 
$\BZ[x]:=\BZ[x_{1},x_{2},\dots]$, and 
$\BZ[\Q, x] := \BZ[\Q] \otimes \BZ[x]$. 
Let $S_{\infty}$ denote the infinite symmetric group on $\BZ_{+}:=\{ 1,2,\dots,n,\dots \}$. 
For each $w \in S_{\infty}$, let $\FG_{w}^{\Q} \in \BZ[\Q, x]$ denote 
the quantum Grothendieck polynomial associated to $w$ (see Section~\ref{subsec:QGP}). 
Now, for $k \geq 1$ and $0 \leq p \leq k$, we set $G_{p}^{k} := \FG_{c[k, p]}^{\Q}$, 
where $c[k, p] \in S_{\infty}$ denotes the cyclic permutation $(k-p+1, k-p+2, \ldots, k, k+1)$. 
Also, for $k \ge 1$ and $w \in S_{\infty}$, 
let $\sfp{k}{}$ denote the set of all $k$-Pieri chains starting at $w$, 
where a $k$-Pieri chain is a directed path in the quantum Bruhat graph $\QBG(S_{\infty})$ on $S_{\infty}$ 
satisfying the conditions in Definition~\ref{dfn:pieri}. For $k \geq 1$ and $0 \leq p \leq k$, 
let $\sfp{k}{p}$ denote the subset of $\sfp{k}{}$ consisting of 
the elements having a $p$-marking, and let $\Mark_{p}(\bp)$ denote the set of 
$p$-markings of $\bp \in \sfp{k}{p}$; a $p$-marking of a $k$-Pieri chain $\bp$ is 
a subset of the set of labels in the directed path $\bp$ of cardinality $p$ 
satisfying the conditions in Definition~\ref{dfn:mark}. 
For a directed path $\bp$ in $\QBG(S_{\infty})$, let $\ell(\bp) \in \BZ_{\geq 0}$ be its length, $\ed(\bp) \in S_{\infty}$ its endpoint, both of which are defined just below equation~\eqref{eq:bp01}, and let $\Q(\bp) \in \BZ[\Q_{1},\Q_{2},\dots]$ be the monomial defined by equation~\eqref{eq:quantity}. 

Our main result can be stated as follows. 

\begin{ithm}[$=$ Theorem~\ref{thm:pieri}]
Let $k \geq 1$ and $0 \leq p \leq k$. 
For an arbitrary $w \in S_{\infty}$, the following equality holds in $\BZ[\Q, x]$\,{\rm:}
\begin{equation} \label{eq:maineq}
\FG^{\Q}_{w} \G{k}{p} 
= \sum_{\bp \in \sfp{k}{p}} (-1)^{\ell(\bp)-p} (\# \Mark_{p}(\bp)) \Q(\bp) \FG^{\Q}_{\ed(\bp)}. 
\end{equation}
\end{ithm}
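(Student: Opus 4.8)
The plan is to deduce the Pieri formula from the already-known Monk-type formula (\cite[Theorem~6.4]{LM}, equivalently \cite{LNS}) by induction, rather than attacking the quantum $K$-theory geometry directly. The key observation is that $\G{k}{p} = \FG^{\Q}_{c[k,p]}$ can be built up from divisor classes: one expresses $c[k,p]$ (or a suitable telescoping product of the Grothendieck polynomials $\G{k}{1}, \G{k}{2}, \dots$) in terms of repeated Monk-type products, so that the left-hand side $\FG^{\Q}_{w}\G{k}{p}$ is obtained by iterating the Monk operator. Concretely, I would first establish a recursion expressing $\G{k}{p}$ in terms of $\G{k}{p-1}$ and a divisor class (this is essentially the classical relation among the generators, deformed by quantum corrections), and then apply the Monk formula to the product $\bigl(\FG^{\Q}_{w}\G{k}{p-1}\bigr)\cdot(\text{divisor})$, using the inductive description of $\FG^{\Q}_{w}\G{k}{p-1}$ as a sum over $(k,p{-}1)$-Pieri chains with $(p{-}1)$-markings.

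The heart of the argument is then a purely combinatorial identity on the quantum Bruhat graph: one must show that the result of appending one more Monk-type step (a single edge, with its associated sign, quantum weight, and the $K$-theoretic ``stay'' term) to every marked $(k,p{-}1)$-Pieri chain, and then collecting terms, reproduces exactly the sum over $(k,p)$-Pieri chains weighted by $(\#\Mark_{p}(\bp))$. This requires carefully matching: (i) the path conditions in Definition~\ref{dfn:pieri} that cut a longer directed path down to a genuine $k$-Pieri chain; (ii) the bookkeeping of how a $(p{-}1)$-marking extends to a $p$-marking, which is where the multiplicity $\#\Mark_{p}(\bp)$ enters — a single $k$-Pieri chain $\bp$ arises from $\#\Mark_{p}(\bp)$ many marked shorter chains (or, more precisely, the marking data must be reorganized); and (iii) the sign $(-1)^{\ell(\bp)-p}$ and the quantum monomial $\Q(\bp)$, which must be shown to be compatible with the edge weights in the Monk formula and stable under the various cancellations. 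I would organize this as a sequence of lemmas isolating the edge-insertion/deletion moves on Pieri chains and on markings, tracking the three types of moves (down-step, up-step, and the idempotent ``stay'' contribution peculiar to $K$-theory) separately.

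The main obstacle I anticipate is precisely step (ii) together with the cancellations in (iii): because the Monk formula is not multiplicity-free (the $K$-theoretic terms introduce signs and the quantum Bruhat graph can have several edges between the relevant pairs), the naive iteration produces many more terms than appear on the right-hand side, and one must prove a delicate inclusion–exclusion/telescoping cancellation showing that the surviving terms are exactly the marked $(k,p)$-Pieri chains, each with the stated coefficient. Controlling this will likely require a refined combinatorial model — e.g., encoding chains together with their markings as lattice-path or tableau-like objects and exhibiting an explicit sign-reversing involution on the ``extra'' terms. A secondary technical point is to verify that all manipulations take place inside $\BZ[\Q,x]$ and that the stability under the natural embeddings $S_{n}\hookrightarrow S_{n+1}$ is respected, so that it suffices to prove the identity for $w$ in a large enough finite symmetric group; this reduction should follow from the stability properties of the $\FG^{\Q}_{w}$ recorded earlier, but it must be checked that the $(k,p)$-Pieri chains and markings behave well under this stabilization. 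Finally, I would confirm the $\Q_{i}\to 0$ specialization recovers the Lascoux–Schützenberger Pieri rule for Grothendieck polynomials as a consistency check on signs and multiplicities.
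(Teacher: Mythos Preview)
Your high-level strategy---induction plus the Monk formula plus sign-reversing combinatorial matchings---is indeed how the paper proceeds, but the specific induction scheme you propose does not match and is unlikely to work as stated. You propose induction on $p$ with $k$ fixed, via a recursion expressing $\G{k}{p}$ in terms of $\G{k}{p-1}$ and a single divisor class. No such clean two-term recursion is available for quantum Grothendieck polynomials; the relation among the $\G{k}{p}$ that the paper actually uses (Proposition~\ref{prop:rec}) is
\[
\G{k}{p} - \G{k-1}{p-1} = (1-\Q_{k})(1-x_{k})(1-\Q_{k-1})^{-1}\bigl\{(\G{k-1}{p} - \Q_{k-1}\G{k-2}{p-1}) - (\G{k-1}{p-1} - \Q_{k-1}\G{k-2}{p-2})\bigr\},
\]
derived from \cite[(3.30),\,(3.32)]{LM}. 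This forces induction on $k$, not on $p$: one assumes the Pieri formula for all $\G{k-1}{*}$ and $\G{k-2}{*}$, applies the Monk operator $(1-\Q_{k})(1-x_{k})$ via Proposition~\ref{prop:LM61} to each of the four terms on the right, and must then disentangle the resulting sums over pairs (a $(k{-}1)$- or $(k{-}2)$-Pieri chain followed by a $k$-Monk chain).

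The factor $(1-\Q_{k-1})^{-1}$ is a second issue you do not anticipate: a priori the right-hand side lives only in the localized ring $\K_{\infty}'$, and a substantial portion of the combinatorics (the eight bijections of Proposition~\ref{prop:mat1}) is devoted to showing that terms pair off so that this denominator cancels and the identity descends to $\K_{\infty}$. Only after that does one reach the stage you describe---matching the surviving terms against $k$-Pieri chains with $p$-markings---and even there the argument is organized not as a single involution but as two further rounds of bijections (Propositions~\ref{prop:mat2} and \ref{prop:mat3}), supported by insertion and deletion procedures on chains that govern how the $(*,k)$-segment of a $(k{-}1)$-Pieri chain interacts with the $(k,*)$-segment of a $k$-Monk chain. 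Your instinct that the core is a cancellation argument on the quantum Bruhat graph is correct, but the actual recursion dictates a considerably more elaborate decomposition than ``append one Monk step to a $(k,p{-}1)$-chain.''
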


Based on the fact mentioned above that quantum Grothendieck polynomials represent (opposite) Schubert classes in $QK(Fl_{n})$, as a consequence of this theorem, we can verify the positivity conjecture (\cite[Conjecture~7.5]{LM}) in a special case that $v$ is the cyclic permutation $(k-p+1, k-p+2, \ldots, k, k+1)$, with $0 \leq p \leq k$, $k \geq 1$; see Corollary~\ref{cor:pos}. 
Also, this theorem specializes to the classical Pieri formula (\cite[Theorem~1.12]{LS}) for the multiplication of Grothendieck polynomials by setting $Q_{1} = \Q_{2} = \cdots = 0$, and to the Pieri formula (\cite[Corollary~4.3]{P}) for the multiplication of quantum Schubert polynomials by taking the the lowest homogeneous components on both sides of equation~\eqref{eq:maineq}; for details, see Remark~\ref{rem:gen}. 

Our proof of the Pieri-type multiplication formula is essentially combinatorial, 
and relies only on some basic properties of the combinatorially 
defined quantum Grothendieck polynomials, which are given in \cite{LM}. 
However, we should mention the connection between this formula and the quantum $K$-theory ring $QK(Fl_{n})$. 
We know from \cite[Theorem~51]{LNS} that if we use the formal power series ring 
$\BZ[\![\Q]\!] = \BZ[\![\Q_{1}, \ldots, \Q_{n-1}]\!]$ instead of 
the polynomial ring $\BZ[\Q_{1}, \ldots, \Q_{n-1}]$ as a base ring, then the quantum 
$K$-theory ring $QK(Fl_{n}) := K(Fl_{n}) \otimes \BZ[\![\Q]\!]$ 
is presented as the quotient ring $(\BZ[\![\Q]\!][x_1, \ldots, x_n])/\ha{I}^{\Q}_{n}$, 
where the ideal $\ha{I}^{\Q}_{n}$ in $\BZ[\![\Q]\!][x_1, \ldots x_n]$ 
is generated by the polynomials $\ol{E}^{n}_{i}(x_1, \ldots, x_n)$, $1 \leq i \leq n$; 
the polynomial $\ol{E}^{n}_{i}(x_1, \ldots, x_n)$ is 
(the specialization at $Q_{n} = 0$ of) the image of 
the elementary symmetric polynomial $e^{n}_{i}(x_1, \ldots, x_n)$ of degree $i$ 
in the variables $x_1, \ldots, x_n$ under the $K$-theoretic quantization map 
(see \cite[\S 3]{LM} for details). 
Namely, we have the following isomorphism of $\BZ[\![\Q]\!]$-algebras: 
\begin{equation*}
QK(Fl_{n}) \cong 
(\BZ[\![\Q]\!][x_{1}, \ldots, x_{n}])/\ha{I}^{\Q}_{n}; 
\end{equation*}
the torus-equivariant version of this result is obtained in \cite[Theorem~6.1]{MNS1}. 
Also, it is known (see \cite[Remark~3.27]{LM}) that the residue classes of the polynomials 
$G_{p_{1}, \dots, p_{n-1}}(x_1, \ldots, x_{n-1}) := 
G^{1}_{p_1}(x_1) G^{2}_{p_2}(x_1, x_2) \cdots G^{n-1}_{p_{n-1}}(x_1, \ldots, x_{n-1})$ 
for $0 \leq p_{i} \leq i$, with $1 \leq i \leq n-1$, form a $\BZ[\![\Q]\!]$-basis of 
the quotient ring $(\BZ[\![\Q]\!][x_1, \ldots, x_n])/\ha{I}^{\Q}_{n} \cong QK(Fl_{n})$; 
note that the formal power series ring $\BZ[\![\Q]\!]$ contains 
the localized polynomial ring $\BZ[(1-Q_1)^{\pm 1}, \ldots, (1-Q_{n-1})^{\pm 1}]$. 
Hence the Pieri-type multiplication formula would enable us to 
compute explicitly the quantum product of 
two arbitrary (opposite) Schubert classes in $QK(Fl_{n})$ 
on the basis of the fact (proved in \cite[\S 6.1]{LNS}) that 
the (opposite) Schubert classes in $QK(Fl_{n})$, 
indexed by the elements of $S_{n}$, are represented by 
the corresponding quantum Grothendieck polynomials 
under the isomorphism above; the torus-equivariant version 
of this fact is proved in \cite[Theorem~4.4]{MNS2}. 
More precisely, to compute the product of two quantum Grothendieck polynomials 
in the quotient ring $(\BZ[\![\Q]\!][x_1, \ldots x_n])/\ha{I}^{\Q}_{n}$, 
we expand the product in the polynomial ring $\BZ[\![\Q]\!][x_1, \ldots x_n]$ 
in terms of the quantum Grothendieck polynomials, and then drop 
all terms containing quantum Grothendieck polynomials 
associated to $w \in S_{\infty}$ with $w \notin S_{n}$, 
as in the case of quantum Schubert polynomials (\cite[\S 10]{FGP}); 
for details, see the explanation preceding \cite[Theorem~51]{LNS}, and also the statement \cite[Proposition~B.7]{MNS1} in the torus-equivariant case. 

This paper is organized as follows. In Section~\ref{sec:pieri}, 
after fixing the basic notation for the quantum Bruhat graph on $S_{\infty}$, 
we recall from \cite{LM} some known facts about quantum Grothendieck polynomials, and 
then state our main result, that is, a Pieri-type multiplication formula 
for quantum Grothendieck polynomials. 
In Section~\ref{sec:main}, deferring the proofs of three key propositions 
(Propositions~\ref{prop:mat1a}, \ref{prop:mat2a}, and \ref{prop:mat3a}) to 
subsequent sections, we give a proof of the Pieri-type multiplication formula; 
the proofs of these three propositions are given in 
Sections~\ref{sec:prfmat1}, \ref{sec:prfmat2}, and \ref{sec:prfmat3}, respectively. 
In Appendices~\ref{sec:lemdp} and \ref{sec:ins}, 
we state and prove some technical results needed in 
Sections~\ref{sec:prfmat1}, \ref{sec:prfmat2}, and \ref{sec:prfmat3}. 
In Appendix~\ref{sec:example}, we give a few examples of the Pieri-type multiplication formula. 

\medskip

\paragraph{\bf Acknowledgments.}
We would like to thank Cristian Lenart for helpful discussions on \cite{LM}, and in particular on \cite[Conjecture~6.7]{LM}.
S.N. was partly supported by JSPS Grant-in-Aid for Scientific Research (C) 21K03198.
D.S. was partly supported by JSPS Grant-in-Aid for Scientific Research (C) 19K03415.

%
\section{Pieri formula.}
\label{sec:pieri}
%
%
\subsection{Basic notation.}
\label{subsec:not}

For $n \in \BZ_{\ge 1}$, let $S_{n}$ denote the symmetric group on $\{1,2,\dots,n\}$, 
with $T_{n} = \bigl\{ (a,b) \mid 1 \le a < b \le n \bigr\}$ 
the set of transpositions in $S_{n}$ and $\ell_{n}:S_{n} \rightarrow \BZ_{\ge 0}$ 
the length function on $S_{n}$. For each $n,m \in \BZ_{\ge 1}$ with $n \le m$, 
let $\rho_{m,n}:S_{n} \hookrightarrow S_{m}$ be the canonical embedding of groups defined by 
\begin{equation*}
(\rho_{m,n}(w))(a) := 
\begin{cases}
w(a) & \text{for $ 1 \le a \le n$}, \\
a & \text{for $n+1 \le a \le m$}
\end{cases}
\end{equation*} 
for $w \in S_{n}$. The infinite symmetric group $S_{\infty}$ is defined to be 
the inductive limit of $\{S_{n}\}_{n \ge 1}$ 
with respect to these embeddings, which can be regarded as the subgroup of 
the group of bijections on $\BZ_{+}:=\{ 1,2,\dots,n,\dots \}$ consisting of those elements $w$ such that 
$w(a)=a$ for all but finitely many $a \in \BZ_{+}$. For each $n \in \BZ_{\ge 1}$, 
let $\rho_{n}:S_{n} \hookrightarrow S_{\infty}$ be the canonical embedding, by which 
we regard $S_{n}$ as a subgroup of $S_{\infty}$. We denote by $T_{\infty} = 
\bigl\{ (a,b) \mid \text{$a,b \in \BZ_{+}$ with $a < b$} \bigr\} \ (=\bigcup_{n=1}^{\infty} T_{n})$ 
the set of transpositions in $S_{\infty}$, and by $\ell_{\infty}:S_{\infty} \rightarrow \BZ_{\ge 0}$ 
the length function on $S_{\infty}$; note that $\ell_{\infty}(w) = \ell_{n}(w)$ 
for all $w \in S_{n} \hookrightarrow S_{\infty}$. 

\begin{dfn}[{cf. \cite[Definition~6.1]{BFP}}]
The quantum Bruhat graph $\QBG(S_{\infty})$ on $S_{\infty}$ 
is the $T_{\infty}$-labeled directed graph whose vertices are the elements of $S_{\infty}$ 
and whose (directed) edges are of the form: 
$x \xrightarrow{(a,b)} y$, with $x, y \in S_{\infty}$ and $(a,b) \in T_{\infty}$, 
such that $y = x \cdot (a,b)$ and either of the following holds: 
(B) $\ell_{\infty}(y) = \ell_{\infty}(x) + 1$, or (Q) $\ell_{\infty}(y) = \ell_{\infty}(x) - 2(b-a) + 1$. 
An edge satisfying (B) (resp., (Q)) is called a Bruhat edge (resp., a quantum edge).
\end{dfn}

For $m_1,\,m_2 \in \BZ$, we set $[m_1,m_2]:=\bigl\{ m \in \BZ \mid m_1 \le m \le m_2 \bigr\}$. 
We know the following lemma from \cite[Proposition~3.6]{Len}.
%
%
\begin{lem} \label{lem:edge}
Let $x \in S_{\infty}$, and $a,b \in \BZ_{+}$ with $a < b$. 
\begin{enu}
\item[\rm (B)] 
We have a Bruhat edge $x \edge{(a,b)} x \cdot (a,b)$ in $\QBG(S_{\infty})$ if and only if 
$x(a) < x(b)$ and $x(c) \notin [x(a),x(b)]$ for any $a < c < b$. 

\item[\rm (Q)]
We have a quantum edge $x \edge{(a,b)} x \cdot (a,b)$ in $\QBG(S_{\infty})$ if and only if 
$x(a) > x(b)$ and $x(c) \in [x(b),x(a)]$ for all $a < c < b$. 
\end{enu}
\end{lem}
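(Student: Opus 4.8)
The plan is to prove both characterizations by a direct computation of the change in length (number of inversions) when passing from $x$ to $y := x \cdot (a,b)$. Recall that $\ell_{\infty}(w) = \#\{(i,j) : i < j,\ w(i) > w(j)\}$, and that $y(a) = x(b)$, $y(b) = x(a)$, while $y(c) = x(c)$ for $c \neq a,b$. Consequently the only ordered pairs $(i,j)$ whose inversion status can differ between $x$ and $y$ are those with $i \in \{a,b\}$ or $j \in \{a,b\}$, and these split naturally into seven families: $(c,a)$ and $(c,b)$ with $c < a$; $(a,c)$ and $(c,b)$ with $a < c < b$; $(a,c)$ and $(b,c)$ with $c > b$; and the single pair $(a,b)$.

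First I would dispose of the "outer" indices. For any $c$ with $c < a$ (resp. $c > b$), the contribution of the two pairs $(c,a),(c,b)$ (resp. $(a,c),(b,c)$) is governed by the pair of comparisons $\{\,x(c)\ \text{vs.}\ x(a),\ \ x(c)\ \text{vs.}\ x(b)\,\}$, and passing to $y$ merely interchanges which of the two positional pairs carries which comparison; hence such $c$ contribute $0$ to $\ell_{\infty}(y)-\ell_{\infty}(x)$. The pair $(a,b)$ contributes $\mathrm{sgn}(x(b)-x(a))$, i.e. $+1$ if $x(a)<x(b)$ and $-1$ if $x(a)>x(b)$. Finally, for a fixed $c$ with $a<c<b$, a three-element case check (all values distinct) shows that $(a,c)$ and $(c,b)$ together contribute $\mathrm{sgn}(x(b)-x(c)) + \mathrm{sgn}(x(c)-x(a))$, which is $+2$ if $x(a)<x(c)<x(b)$, is $-2$ if $x(b)<x(c)<x(a)$, and is $0$ in all remaining cases — equivalently, $0$ precisely when $x(c) \notin [x(a),x(b)]$ (when $x(a)<x(b)$) or $x(c) \notin [x(b),x(a)]$ (when $x(a)>x(b)$). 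Summing, I arrive at the key identity
\[
\ell_{\infty}(y) - \ell_{\infty}(x) = \mathrm{sgn}\bigl(x(b)-x(a)\bigr) + \sum_{a<c<b} \Bigl(\mathrm{sgn}\bigl(x(b)-x(c)\bigr) + \mathrm{sgn}\bigl(x(c)-x(a)\bigr)\Bigr).
\]

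From this identity the lemma follows by inspection, and this is the easy part. If $x(a)<x(b)$, every summand lies in $\{0,2\}$, so the difference is $\ge 1$, equals $1$ (condition (B)) exactly when all summands vanish, and can never equal the value $-2(b-a)+1\le -1$ demanded by (Q). If $x(a)>x(b)$, every summand lies in $\{-2,0\}$ and there are $b-a-1$ of them, so the difference is $\le -1$, ruling out (B); condition (Q) asks that it equal $-1-2(b-a-1)=-2(b-a)+1$, the minimum, attained exactly when every summand equals $-2$, i.e. when $x(c)\in[x(b),x(a)]$ for all $a<c<b$. This gives both (B) and (Q). The only step demanding any care is the bookkeeping that establishes the displayed identity — specifically, checking that the outer pairs genuinely cancel and that the three-element case analysis for $a<c<b$ is exhaustive — but both are routine once the seven families of pairs above are written out explicitly.
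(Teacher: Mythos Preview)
Your argument is correct: the inversion-count identity you derive is exactly right, and the case analysis that extracts conditions (B) and (Q) from it is clean and complete. The only minor cosmetic point is that you might state explicitly that the seven families you list are disjoint and exhaust all pairs meeting $\{a,b\}$, but this is obvious.

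As for comparison with the paper: the paper does not actually prove this lemma at all. It is introduced with the sentence ``We know the following lemma from \cite[Proposition~3.6]{Len}'' and no argument is given. So your direct inversion-counting proof is strictly more than what the paper provides; it supplies a self-contained elementary verification in place of a citation. The benefit of your approach is that it makes the result independent of the reference and transparent to any reader who knows only the definition of length as inversion count; the benefit of the paper's approach is brevity, since the result is standard in the subject.
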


For simplicity of notation, 
we write a directed path 
\begin{equation} \label{eq:bp00}
\bp:w=x_{0} \edge{(a_{1},b_{1})} x_{1} \edge{(a_{2},b_{2})} \cdots 
\edge{(a_{r},b_{r})} x_{r}
\end{equation}
in the quantum Bruhat graph $\QBG(S_{\infty})$ as:
\begin{equation} \label{eq:bp01}
\bp=(w\,;\,(a_{1},b_{1}),\dots,(a_{r},b_{r})); 
\end{equation}
when $r=0$, we define $\bp$ as $\bp=(w\,;\,\emptyset)=\emptyset$. 
We define $\ell(\bp):=r$ and $\ed(\bp):=x_{r}$. 
We call a (consecutive) subsequence $\bs$ of labels in $\bp$ of the form: 
\begin{equation} \label{eq:bs01}
(a_{s+1},b_{s+1}),(a_{s+2},b_{s+2}), \dots,(a_{t-1},b_{t-1}), (a_{t},b_{t}), 
\end{equation}
with $0 \le s \le t \le r$, a segment in $\bp$; 
if $s=t$, then the segment $\bs$ is understood to be empty, 
and we write it as $\emptyset$. 
We can also think of the directed path $\bp$ as a segment of $\bp$ 
corresponding to the special case that $s=0$ and $t=r$; thus it is natural to 
define the length $\ell(\bs)$ of the segment $\bs$ above by $\ell(\bs):=t-s$. 
Using the segment $\bs$ of the form \eqref{eq:bs01}, we can write $\bp$ in \eqref{eq:bp01} as:
\begin{equation*}
\bp=(w\,;\,(a_{1},b_{1}),\dots,(a_{s},b_{s}),\bs,(a_{t+1},b_{t+1}),\dots,(a_{r},b_{r})). 
\end{equation*}
When $\bp$ and $\bs$ are of the forms \eqref{eq:bp01} and \eqref{eq:bs01}, 
respectively, we set
\begin{align*}
n_{(a,*)}(\bs) & := \# \bigl\{ s+1 \le u \le t \mid a_{u}=a \bigr\}, \\
n_{(*,b)}(\bs) & := \# \bigl\{ s+1 \le u \le t \mid b_{u}=b \bigr\}, \\
n_{(a,b)}(\bs) & := \# \bigl\{ s+1 \le u \le t \mid (a_{u},b_{u})=(a,b) \bigr\}. 
\end{align*}
If $s < t$, then we set $\iota(\bs):=(a_{s+1},b_{s+1})$ and $\kappa(\bs):=(a_{t},b_{t})$, 
and call them the initial label and the final label of $\bs$, respectively; 
if $s=t$, i.e., $\bs=\emptyset$, then $\iota(\bs)$ and $\kappa(\bs)$ are undefined. 
If all the labels in a segment $\bs$ are distinct 
(almost all directed paths in this paper satisfy this condition; 
see Definitions~\ref{dfn:pieri} and \ref{dfn:monk} below), 
we identify $\bs$ with the set of labels in $\bs$. 

We can show the following lemma, 
which plays a crucial role in our proof of Theorem~\ref{thm:pieri} below,
by exactly the same argument as for \cite[Lemma~2.7]{LS}
(see also \cite{BFP} and \cite[Theorem~7.3]{LNSSS1}). 
%
%
\begin{lem} \label{lem:int}
Let $v \in S_{\infty}$, and $a,b,c,d \in \BZ_{+}$. 
\begin{enu}
\item Assume that $a < b$, $c < d$, and $\{a,b\} \cap \{c,d\} = \emptyset$. 
If $(v\,;\,(a,b),(c,d))$ is a directed path, then so is $(v\,;\,(c,d),(a,b))$. 

\item Assume that $a < b < c$. 
If $(v\,;\,(a,c),(b,c))$ is a directed path, then so is 
$(v\,;\,(b,c), (a,b))$. Also, if $(v\,;\,(b,c),(a,c))$ is a directed path, 
then so is $(v\,;\,(a,b), (b,c))$. 

\item Assume that $a < b < c$. 
If $(v\,;\,(a,b),(a,c))$ is a directed path, then so is $(v;(b,c),(a,b))$. 
Also, if $(v\,;\,(a,c),(a,b))$ is a directed path, then so is $(v\,;\,(a,b),(b,c))$.

\item Assume that $a < b < c$. 
If $(v\,;\,(a,b),(b,c))$ is a directed path, then 
either $(v\,;\,(b,c), (a,c))$ or $(v\,;\,(a,c), (a,b))$ is a directed path. 
Also, if $(v\,;\,(b,c), (a,b))$ is a directed path in the quantum Bruhat graph, then 
either $(v\,;\,(a,c), (b,c))$ or $(v\,;\,(a,b),(a,c))$ is a directed path. 
\end{enu}
\end{lem}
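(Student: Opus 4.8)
The plan is to establish Lemma~\ref{lem:int} by a direct case-by-case analysis of the six (un)ordered pairs of transpositions, in each case reducing the claim to the explicit edge criteria of Lemma~\ref{lem:edge}. The key observation is that whether a composite $(v\,;\,(a,b),(c,d))$ is a directed path depends only on the relative order of the values $v(a), v(b), v(c), v(d)$ (together with the values $v(e)$ for indices $e$ strictly between the relevant pairs), so in each case there are only finitely many configurations of these values to check. For part (1), where $\{a,b\}\cap\{c,d\}=\emptyset$, I would note that since the two transpositions are disjoint, $v\cdot(a,b)\cdot(c,d)=v\cdot(c,d)\cdot(a,b)$, and the edge condition for the second step in each order involves disjoint sets of values, so one verifies that the conjunction of the two edge conditions is symmetric in the two transpositions; the only subtlety is the ``betweenness'' condition $v(e)\notin[v(a),v(b)]$ (or the quantum analogue), which must be checked to be unaffected by the other transposition — but this follows because $c,d\notin(a,b)$ forces $v(c),v(d)$ to play no role, and similarly in the other order.

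For parts (2), (3), and (4), where the two transpositions share exactly one index and all indices lie in $\{a,b,c\}$ with $a<b<c$, I would organize the argument by the type (Bruhat or quantum) of each of the two edges in the given path, giving $2\times 2$ subcases, and in each subcase read off from Lemma~\ref{lem:edge} the precise inequalities among $v(a),v(b),v(c)$ (and $v(e)$ for $a<e<c$, $e\neq b$) that hold. From those inequalities I would then verify that the target path (or, in part (4), one of the two alternative target paths) is also directed, again by checking the criteria of Lemma~\ref{lem:edge}. The bookkeeping is lightened by the fact that a length-$2$ path from $v$ using two transpositions among $\{a,b,c\}$ ends at a fixed permutation (the one permuting the values $v(a),v(b),v(c)$ in a prescribed way), so the multiset of values is constant across all the paths under consideration, and one is really just comparing total length changes.

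Rather than redo this analysis from scratch, the cleanest route is to invoke the cited precedent: the same statement for ordinary (finite) symmetric groups is \cite[Lemma~2.7]{LS}, and the proof there is purely in terms of the combinatorial edge criteria, which are identical to those in Lemma~\ref{lem:edge} once one restricts attention to the finitely many indices involved. Concretely, given $v\in S_{\infty}$ and the bounded set of indices appearing in the statement, choose $n$ large enough that all these indices lie in $[1,n]$ and $v\in S_{n}\hookrightarrow S_{\infty}$; since $\ell_{\infty}(w)=\ell_{n}(w)$ for $w\in S_{n}$ and the edge criteria of Lemma~\ref{lem:edge} only reference $v(c)$ for $c$ strictly between the two indices of a transposition (hence $c\le n$), the notion of directed path in $\QBG(S_{\infty})$ among these vertices coincides with that in $\QBG(S_{n})$. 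Thus Lemma~\ref{lem:int} follows immediately from its finite-type counterpart \cite[Lemma~2.7]{LS}.

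The main obstacle, and the only real content, is part (4): unlike parts (1)--(3), the conclusion is a disjunction, so one cannot simply transport a single path but must show that \emph{at least one} of the two candidate paths is directed. The crux is a length/parity argument — given that $(v\,;\,(a,b),(b,c))$ is directed, the endpoint $v\cdot(a,b)\cdot(b,c)=v\cdot(a,c)\cdot(b,c)\cdot(a,b)$ is determined, and one shows that the configuration of values $v(a),v(b),v(c)$ forces exactly one of the two ``first steps'' $(b,c)$ or $(a,c)$ from $v$ to be an edge (of the appropriate type), after which the remaining step is automatically an edge by Lemma~\ref{lem:edge}. I expect this to require carefully enumerating which of the orderings of $\{v(a),v(b),v(c)\}$ are compatible with $(v\,;\,(a,b),(b,c))$ being a path, and checking the disjunction holds in each; this is exactly where the argument of \cite[Lemma~2.7]{LS} does its work, so citing it is the efficient path, but I would still spell out this case if a self-contained proof were desired.
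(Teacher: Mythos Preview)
Your main approach---reduce to a sufficiently large $S_n$ and invoke \cite[Lemma~2.7]{LS}---is exactly what the paper does (it simply states the lemma follows ``by exactly the same argument as for \cite[Lemma~2.7]{LS}'', with additional pointers to \cite{BFP} and \cite[Theorem~7.3]{LNSSS1}), and your justification of the reduction is correct.

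One caution about your preliminary sketch for part~(1): the claim that ``$c,d\notin(a,b)$ forces $v(c),v(d)$ to play no role'' in the betweenness condition is not right as stated. The hypothesis $\{a,b\}\cap\{c,d\}=\emptyset$ does \emph{not} prevent $c$ or $d$ from lying strictly between $a$ and $b$ as integers, in which case $v(c)$ or $v(d)$ does enter the edge criterion of Lemma~\ref{lem:edge} for the step $(a,b)$, and applying $(c,d)$ first genuinely changes which value sits at that position. The statement is still true, but the direct verification requires tracking how the two edge conditions interact (e.g.\ if $a<c<b<d$, the condition that $(v;(a,b),(c,d))$ is a path already constrains $v(c)$ and $v(b)$ in a way that makes the swapped order work). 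Since you correctly fall back on the citation, this does not affect the validity of your proposal, but if you ever write out the direct case analysis you will need to handle the interleaved-index cases honestly.
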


Now, let $w \in S_{\infty}$. Let $k \ge 2$, and 
let $\bp$ be a directed path in $\QBG(S_{\infty})$ of the form: 
\begin{equation*}
\bp=(w\,;\,\dots\dots,
\underbrace{(j_{1},k),(j_{2},k),\dots,(j_{t},k)}_{=:\,\bs}),
\end{equation*}
with $t \ge 0$. Let $d \ge k+1$ be such that 
\begin{equation} \label{eq:Alg01a}
(w\,;\,\dots\dots,\underbrace{(j_{1},k),(j_{2},k),\dots,(j_{t},k)}_{=\,\bs},(k,d))
\end{equation}
is also a directed path in $\QBG(S_{\infty})$. We present an algorithm, which we call 
{\bf Algorithm $(\bs : (k,d))$}, as follows; 
we will use this algorithm in order to construct certain bijections 
in Propositions~\ref{prop:mat1}, ~\ref{prop:mat2}, and \ref{prop:mat3}. 

\begin{itemize}
\item[(i)] Start with the directed path \eqref{eq:Alg01a}. 
\item[(ii)] Assume that we have a directed path of the form:
\begin{equation*}
(w\,;\,\dots\dots,
  \underbrace{(j_{1},k),\dots,(j_{u},k)}_{%
  \text{omitted if $u=0$} },(k,d),
  \underbrace{(j_{u+1},d),\dots,(j_{t},d)}_{%
  \text{omitted if $u=t$} })
\end{equation*}
for some $0 \le u \le t$. If $u=0$, then end the algorithm. 
If $u > 0$, then we see from Lemma~\ref{lem:int}\,(4), 
applied to the segment $(j_{u},k),(k,d)$, 
that either of the following occurs: 
(iia)~we have a directed path of the form: 
\begin{equation*}
(w\,;\,\dots\dots,
  (j_{1},k),\dots,(j_{u-1},k),(k,d),(j_{u},d),
  (j_{u+1},d),\dots,(j_{t},d)), 
\end{equation*}
or (iib)~we have a directed path of the form: 
\begin{equation*}
(w\,;\,\dots\dots,
  (j_{1},k),\dots,(j_{u-1},k),(j_{u},d),(j_{u},k),
  (j_{u+1},d),\dots,(j_{t},d)). 
\end{equation*}
If (iib) occurs, then end the algorithm. 
If (iia) occurs, then go back to the beginning of (ii), 
with $u$ replaced by $u-1$. 
\end{itemize}

\begin{ex} \label{ex:algo}
In examples in this paper and in Appendix~\ref{sec:example}, 
we use one-line notation for elements in $S_{\infty}$. 
Namely, the symbol $a_1 a_2 \cdots a_n$ denotes the element $w \in S_{\infty}$
such that $w(i) = a_{i}$ for $1 \le i \le n$ and $w(j)=j$ for $j \ge n+1$. 
Also, for a label $(a,b)$ of a directed path in $\QBG(S_{\infty})$, 
we write $(a,b)_{\SB}$ (resp., $(a,b)_{\SQ}$) 
if the edge labeled by $(a,b)$ is a Bruhat (resp., quantum) edge. 
\begin{enu}
\item Assume that $k = 5$. Let us run {\bf Algorithm $(\bs : (5,7))$}, with
\begin{equation*}
(7465321; \underbrace{(1,5)_{\SQ},(3,5)_{\SB}, (4,5)_{\SB}}_{= \, \bs},(5,7)_{\SQ}). 
\end{equation*}
First, we apply Lemma~\ref{lem:int}\,(4) to the segment 
$(4,5)_{\SB},(5,7)_{\SQ}$; we see that (iia) occurs since 
\begin{equation*}
(7465321; (1,5)_{\SQ},(3,5)_{\SB}, (5,7)_{\SQ},(5,7)_{\SB})
\end{equation*}
is a directed path. As the next step, 
we apply Lemma~\ref{lem:int}\,(4) 
to the segment $(3,5)_{\SB}, (5,7)_{\SQ}$; 
we see that (iia) occurs since 
\begin{equation*}
(7465321; (1,5)_{\SQ},(5,7)_{\SQ},(3,7)_{\SB}, (5,7)_{\SB})
\end{equation*}
is a directed path. Also, 
we apply Lemma~\ref{lem:int}\,(4) to 
the segment $(1,5)_{\SQ},(5,7)_{\SQ}$; we see that 
(iib) occurs since 
\begin{equation*}
(7465321; (1,7)_{\SQ},(1,5)_{\SB},(3,7)_{\SB}, (5,7)_{\SB})
\end{equation*}
is a directed path; we end the algorithm here. 

\item Assume that $k = 5$. Let us run {\bf Algorithm $(\bs : (5,7))$}, with
\begin{equation*}
(32615874; \underbrace{(1,5)_{\SB},(4,5)_{\SB}}_{= \, \bs},(5,7)_{\SB}). 
\end{equation*}
We see that 
\begin{align*}
& (32615874; (1,5)_{\SB},(5,7)_{\SB},(4,7)_{\SB}), \\
& (32615874; (5,7)_{\SB},(1,7)_{\SB},(4,7)_{\SB})
\end{align*}
are directed paths, which implies that (iia) occurs in each step 
of the algorithm. Since $(5,7)$ has been moved to the left of 
the segment $(1,7),(4,7)$, which is obtained from $\bs$ 
by replacing $(j,5)$ by $(j,7)$, and so ``$u=0$'' in this case, 
we end the algorithm. 
\end{enu}
\end{ex}

%
\subsection{Quantum Grothendieck polynomials.}
\label{subsec:QGP}

For $n \in \BZ_{\ge 1}$, we set 
\begin{equation*}
\K_{n} := \BZ[\Q_{1},\Q_{2},\dots,\Q_{n-1}] \otimes_{\BZ} \BZ[x_{1},x_{2},\dots,x_{n}].
\end{equation*}
Also, we set
\begin{align*}
\K_{\infty} & := \BZ[\Q_{1},\Q_{2},\dots] \otimes_{\BZ} \BZ[x_{1},x_{2},\dots], \\
\K_{\infty}' & := \BZ[(1-\Q_{1})^{\pm 1},(1-\Q_{2})^{\pm 1},\dots] \otimes_{\BZ} \K_{n} \ (\supset \K_{n}). 
\end{align*}
%
%
Let $\FG_{w}^{\Q} \in \K_{n}$, $w \in S_{n}$, be the quantum Grothendieck polynomials
defined in \cite[Definition~3.18]{LM}. We know the following stability property from \cite[Proposition~3.20]{LM}. 

\begin{prop} \label{prop:stability}
Let $n,m \in \BZ_{\ge 1}$ with $n \le m$. Then, 
$\FG_{\rho_{m,n}(w)}^{\Q} \in \K_{m}$ is identical to $\FG_{w}^{\Q} \in \K_{n} \subset \K_{m}$ 
for all $w \in S_{n}$. 
\end{prop}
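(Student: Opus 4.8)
The plan is to argue directly from the definition of $\FG^{\Q}_w$ given in \cite[Definition~3.18]{LM}, following the line of argument used for the analogous statement for quantum Schubert polynomials in \cite{FGP}. Recall that $\FG^{\Q}_w$, for $w \in S_n$, is the image of the ordinary Grothendieck polynomial $\FG_w$ under the $K$-theoretic quantization map $\mathfrak{q}_n$: one expands $\FG_w$ as a $\BZ$-linear combination of the standard elementary monomials $\prod_{i=1}^{n-1} e_{p_i}(x_{1},\dots,x_{i})$ with $0 \le p_i \le i$, and then replaces each factor $e_{p_i}(x_{1},\dots,x_{i})$ by the corresponding quantum elementary symmetric polynomial $E^{(i)}_{p_i} \in \BZ[\Q_{1},\dots,\Q_{i-1}] \otimes \BZ[x_{1},\dots,x_{i}]$ of Kirillov--Maeno. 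Thus $\FG^{\Q}_w = \mathfrak{q}_n(\FG_w)$ for $w \in S_n$, and likewise $\FG^{\Q}_{\rho_{m,n}(w)} = \mathfrak{q}_m(\FG_{\rho_{m,n}(w)})$; the goal is to show these two polynomials agree inside $\K_m$.

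With this setup, the proof splits into three points. First, the classical stability of Grothendieck polynomials: $\FG_{\rho_{m,n}(w)} = \FG_w$ as elements of $\BZ[x_{1},\dots,x_{m}]$ for $w \in S_n$, and moreover $\FG_w$ lies in the $\BZ$-span $\Lambda_n$ of the standard elementary monomials indexed by $(p_{1},\dots,p_{n-1})$ with $0 \le p_i \le i$ (the $K$-theoretic analogue of the fact from \cite{FGP} used to define quantum Schubert polynomials, established in \cite{LM}). Second, the $\BZ$-linear independence of these monomials (the standard monomial basis theorem), which makes the expansion defining $\mathfrak{q}_n$ unambiguous and realizes $\Lambda_n$ as a genuine subset of $\Lambda_m$ via $(p_{1},\dots,p_{n-1}) \mapsto (p_{1},\dots,p_{n-1},0,\dots,0)$, where we use $e_0(x_{1},\dots,x_{i}) = 1$. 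Third, the compatibility $\mathfrak{q}_m|_{\Lambda_n} = \mathfrak{q}_n$: the quantum elementary symmetric polynomial $E^{(i)}_{p}$ is defined by a recursion intrinsic to $i$ and only involves $\Q_{1},\dots,\Q_{i-1}$ and $x_{1},\dots,x_{i}$, so for $1 \le i \le n-1$ it is literally the same polynomial whether computed in the rank-$n$ or the rank-$m$ setting; and $E^{(i)}_0 = 1$, so the padding factors are trivial. Combining these, $\FG^{\Q}_{\rho_{m,n}(w)} = \mathfrak{q}_m(\FG_{\rho_{m,n}(w)}) = \mathfrak{q}_m(\FG_w) = \mathfrak{q}_n(\FG_w) = \FG^{\Q}_w$ in $\K_m$, as asserted.

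The main obstacle is the first point together with the linear independence in the second: one needs the precise support/degree bound on Grothendieck polynomials ensuring that their expansion in elementary symmetric polynomials uses only the staircase-bounded monomials, and then the standard monomial basis theorem, so that $\mathfrak{q}_n$ is well defined on $\Lambda_n$ and $\Lambda_n \subseteq \Lambda_m$ holds on the nose rather than merely after passing to a quotient ring. Both are standard for Schubert polynomials \cite{FGP} and their $K$-theoretic counterparts are in \cite{LM}; granted these, the remaining work is purely bookkeeping about the recursion defining $E^{(i)}_{p}$ as $i$ grows and about the compatibility of the embeddings $\rho_{m,n}$ with the elementary-monomial bases, which is routine.
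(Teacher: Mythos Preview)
The paper does not give its own proof of this proposition: it is stated with the attribution ``We know the following stability property from \cite[Proposition~3.20]{LM}'' and no argument is supplied. Your proposal is a correct reconstruction of the natural proof, and it is essentially the argument one expects in \cite{LM}: classical stability $\FG_{\rho_{m,n}(w)}=\FG_w$, the fact that $\FG_w$ for $w\in S_n$ lies in the $\BZ$-span of the staircase elementary monomials $\prod_{i=1}^{n-1} e_{p_i}(x_1,\dots,x_i)$ with $0\le p_i\le i$, and the observation that the Kirillov--Maeno polynomials $E^{(i)}_{p}$ depend only on $x_1,\dots,x_i$ and $\Q_1,\dots,\Q_{i-1}$ (with $E^{(i)}_0=1$), so that $\mathfrak{q}_m$ restricts to $\mathfrak{q}_n$ on $\Lambda_n$. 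There is nothing to correct; your identification of the one nontrivial ingredient (the support bound guaranteeing $\FG_w\in\Lambda_n$, together with the linear independence of the elementary monomials) is accurate, and both are supplied in \cite{LM}.
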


By Proposition~\ref{prop:stability}, 
we obtain a family $\{ \FG_{w}^{\Q} \}_{w \in S_{\infty}}$ 
of polynomials in $\K_{\infty}$. 
For $1 \leq p \leq k$, we set 
%
%
\begin{equation} \label{eq:ckp}
\G{k}{p}:=\FG_{(k-p+1,k-p+2,\dots,k,k+1)}^{\Q}, 
\end{equation}
where $(k-p+1,k-p+2,\dots,k,k+1) \in S_{\infty}$ is the cyclic permutation. 
By convention, we set $\G{k}{0}:=1$ for all $k \ge 1$, and 
$\G{k}{p}:=0$ unless $k \ge 1$ and $0 \le p \le k$. 
%
%
\begin{prop} \label{prop:rec}
Let $k \ge 2$ and $1 \le p \le k$. 
The following equality holds in $\K_{\infty}'$\,{\rm:}
\begin{equation} \label{eq:rec}
\begin{split}
\G{k}{p} - \G{k-1}{p-1}  = \, & 
  (1-\Q_{k})(1-x_{k})(1-\Q_{k-1})^{-1} \times \\
& \big\{ ( \G{k-1}{p} - \Q_{k-1} \G{k-2}{p-1} ) -  
         ( \G{k-1}{p-1} - \Q_{k-1} \G{k-2}{p-2} ) \bigr\}. 
\end{split}
\end{equation}
\end{prop}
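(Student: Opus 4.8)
The plan is to translate the recurrence entirely into a statement about the cyclic permutations $c[k,p] = (k-p+1,\dots,k,k+1)$ and then invoke the Monk-type multiplication formula (\cite[Theorem~6.4]{LM}) of Lenart--Maeno, which describes the quantum product $\FG^{\Q}_{w}\cdot\FG^{\Q}_{s_{k}}$ (equivalently, the product with a divisor class) in terms of directed paths in $\QBG(S_{\infty})$. Recall that $\FG^{\Q}_{s_{k}} = \G{k}{1}$ up to the standard normalization. First I would record the key identities in $\BZ[\Q,x]$ relating $\G{k}{p}$, $\G{k-1}{p}$, $\G{k-1}{p-1}$, and $\G{k-2}{p-1}$ that come from applying Monk's formula with the transposition $(k,k+1)$ (and with $(k-1,k)$): the quantum product of $\G{k-1}{p-1}$ with the class $1-x_{k}$, or with a suitable divisor, produces exactly $\G{k}{p}$ plus lower terms, because multiplying the cyclic permutation $(k-p+1,\dots,k)$ by the transposition $(k,k+1)$ and following the unique Bruhat edge yields $(k-p+1,\dots,k,k+1) = c[k,p]$, while the quantum edges contribute the terms with a $\Q_{k-1}$ (resp.\ $\Q_{k}$) factor and one fewer index. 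Equivalently, one can cite \cite[Remark~3.27]{LM} or the analogous recursion for quantum Schubert polynomials in \cite{FGP}, together with the known $K$-theoretic deformation.

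The cleanest route, which I would pursue, is to reduce to a \emph{known} recurrence. The polynomials $\G{k}{p}$ are, by construction in \cite[Definition~3.18]{LM}, the images under the $K$-theoretic quantization map of the Grothendieck polynomials $\FG_{c[k,p]}$, and the latter satisfy a classical recurrence (essentially the one in \cite{LS} expressing a ``hook'' Grothendieck polynomial via the isobaric divided difference $\pi_{k}$ applied to a lower one); on the other hand the quantum deformation of the elementary-symmetric relations introduces precisely the factor $(1-\Q_{k})$ in passing from level $k-1$ to level $k$, together with the substitution rule recorded in the presentation $\K_{\infty}' \supset \K_{n}$. So the second step is: start from $\pi_{k}\bigl(\G{k-1}{p-1}\bigr)$-type identity in the $K$-theory of $Fl_{n}$, apply the quantization map, and track how $\pi_{k}$ interacts with it; the quantization map sends $e^{k}_{i} \mapsto \ol{E}^{k}_{i}$, and the difference between $e^{k}_{i}$ and $e^{k-1}_{i}$ (a factor $(1-x_{k})$) deforms to a factor $(1-\Q_{k})(1-x_{k})(1-\Q_{k-1})^{-1}$ acting on the relevant combination, which is exactly the right-hand side of \eqref{eq:rec}. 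Verifying that the two combinations $\G{k-1}{p} - \Q_{k-1}\G{k-2}{p-1}$ and $\G{k-1}{p-1} - \Q_{k-1}\G{k-2}{p-2}$ are the correct ``stabilized'' quantities (i.e.\ that they are the images of $e^{k-1}$-type expressions rather than $e^{k}$-type ones) is a bookkeeping check that I would carry out by comparing leading terms and degrees.

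Finally I would check the boundary cases: $p = k$ (where $\G{k-2}{p-2}=\G{k-2}{k-2}$ is still defined), $p = 1$ (where $\G{k-1}{p-2}=\G{k-1}{-1}=0$ and $\G{k-2}{p-2}=\G{k-2}{-1}=0$ by the convention $\G{k}{p}:=0$ unless $0\le p\le k$), and small $k$; these just need the stated conventions and cause no trouble. The main obstacle I expect is \emph{not} the structure of the argument but the precise compatibility of the quantization map with the isobaric divided difference operator $\pi_{k}$ — that is, showing that applying $\pi_{k}$ and then quantizing agrees with quantizing and then applying the appropriate deformed operator, up to the multiplicative factor $(1-\Q_{k})(1-\Q_{k-1})^{-1}$. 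This is where one must either quote a commutation property of the quantization map from \cite{LM} or redo the computation directly in the quotient ring $\K_{n}/\ha{I}^{\Q}_{n}$ and then lift it to $\K_{\infty}'$ using stability (Proposition~\ref{prop:stability}); I would favor the latter, since \eqref{eq:rec} is an identity in $\K_{\infty}'$ and it suffices to verify it in $\K_{m}$ for all sufficiently large $m$, where the quantum $K$-theory presentation and its explicit elementary-symmetric relations are available.
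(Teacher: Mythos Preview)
Your plan is not wrong in spirit, but it misses the direct route the paper takes and leaves the hardest step unfinished. The paper's proof is a three-line algebraic substitution: from \cite[(3.30) and (3.32)]{LM} one has
\[
\ol{G}^{k}_{p} \;=\; \G{k}{p} + \Q_{k}(1-\Q_{k})^{-1}\bigl(\G{k}{p}-\G{k-1}{p-1}\bigr),
\]
where $\ol{G}^{k}_{p}:=\G{k}{p}|_{\Q_{k}=0}$; writing the analogous identities for $\ol{G}^{k-1}_{p}$ and $\ol{G}^{k-1}_{p-1}$ and substituting them into \cite[(3.32)]{LM} (which records the classical-type recurrence for $\ol{G}^{k}_{p}$ involving the factor $(1-\Q_{k})(1-x_{k})$) yields \eqref{eq:rec} immediately. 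No Monk chains, no divided differences, no quantization-map commutation are needed: the recurrence is essentially already in \cite{LM}, just packaged differently.

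By contrast, your two proposed routes both carry real cost. The Monk-formula route would require you to enumerate all $k$-Monk chains starting from each of the cyclic permutations $c[k-1,p]$, $c[k-1,p-1]$, $c[k-2,p-1]$, $c[k-2,p-2]$ and then match the resulting linear combinations; this is doable but is a genuine case analysis you have not begun. The quantization-map route hinges, as you yourself flag, on a commutation between the quantization map and $\pi_{k}$ that you do not prove and that is not stated in the form you need in \cite{LM}; ``redoing the computation directly in $\K_{n}/\ha{I}^{\Q}_{n}$'' is not enough, since \eqref{eq:rec} is an identity of polynomials in $\K_{\infty}'$, not merely of residue classes. So the gap is concrete: you have identified the obstacle but not overcome it, and meanwhile the short argument via \cite[(3.30), (3.32)]{LM} bypasses it entirely.
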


\begin{proof}
By \cite[(3.30) and (3.32)]{LM}, we see that 
$\ol{G}^{k}_{p} =\G{k}{p}+\Q_{k}(1-\Q_{k})^{-1}(\G{k}{p}-\G{k-1}{p-1})$ in $\K_{\infty}'$, 
where $\ol{G}^{k}_{p} := \G{k}{p}|_{\Q_{k}=0}$. 
Hence we have 
$\ol{G}^{k-1}_{p} =\G{k-1}{p}+\Q_{k-1}(1-\Q_{k-1})^{-1}(\G{k-1}{p}-\G{k-2}{p-1})$ and 
$\ol{G}^{k-1}_{p-1} =\G{k-1}{p-1}+\Q_{k-1}(1-\Q_{k-1})^{-1}(\G{k-1}{p-1}-\G{k-2}{p-2})$, 
where $\ol{G}^{k-1}_{p} := \G{k-1}{p}|_{\Q_{k-1}=0}$, $\ol{G}^{k-1}_{p-1} := \G{k-1}{p-1}|_{\Q_{k-1}=0}$. 
Substituting these equalities into \cite[(3.32)]{LM}, we obtain \eqref{eq:rec}, as desired. This proves the proposition. 
\end{proof}

For a directed path $\bp$ in $\QBG(S_{\infty})$ of the form \eqref{eq:bp00}, 
we define a monomial $\Q(\bp)$ by
\begin{equation} \label{eq:quantity}
\Q(\bp):=\prod_{
 \begin{subarray}{c}
 1 \le s \le r \\[1mm]
 \text{$x_{s-1} \edge{(a_{s},b_{s})} x_{s}$ is} \\[1mm]
 \text{a quantum edge}
 \end{subarray}} (Q_{a_{s}}Q_{a_{s}+1} \cdots Q_{b_{s}-1}) \in \BZ[\Q_{1},\Q_{2},\dots]. 
\end{equation}

%
\subsection{Monk-type multiplication formula.}
\label{subsec:monk}
%
%
\begin{dfn} \label{dfn:monk}
Let $v \in S_{\infty}$, and $k \ge 1$. A directed path 
\begin{equation*}
\bm = (v\,;\,
  \underbrace{ (a_{1},k),(a_{2},k),\dots,(a_{s},k),}_{%
    \begin{subarray}{c}
    \text{This segment is called } \\[1mm]
    \text{the $(*,k)$-segment of $\bm$,} \\[1mm]
    \text{and denoted by $\bm_{(*,k)}$.}
    \end{subarray}%
    }
  \underbrace{ (k,b_{t}),(k,b_{t-1}),\dots,(k,b_{1}) }_{%
    \begin{subarray}{c}
    \text{This segment is called } \\[1mm]
    \text{the $(k,*)$-segment of $\bm$,} \\[1mm]
    \text{and denoted by $\bm_{(k,*)}$.}
    \end{subarray}})
\end{equation*}
in $\QBG(S_{\infty})$
satisfying the conditions that 
$s \ge 0$ and $k > a_{1} > a_{2} > \cdots > a_{s} \ge 1$, and that 
$t \ge 0$ and $k < b_{1} < b_{2} < \cdots < b_{t}$, is called 
a $k$-Monk chain starting at $x$. 
\end{dfn}
Let $\SM_{k}(v)$ denote 
the set of all $k$-Monk chains starting at $v$. 
We know the following formula from 
\cite[Theorem~6.1]{LM}. 
%
%
\begin{prop} \label{prop:LM61}
For $x \in S_{\infty}$ and $k \ge 1$, 
the following holds in $\K_{\infty}$\,{\rm:} 
\begin{equation} \label{eq:LM61}
(1-\Q_{k})(1-x_{k})\FG^{\Q}_{x} = 
\sum_{ \bm \in \SM_{k}(x) }
(-1)^{\ell(\bm_{(k,*)})} \Q(\bm) \FG^{\Q}_{ \ed(\bm) }. 
\end{equation}
\end{prop}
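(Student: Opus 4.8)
The plan is to deduce the formula from its $\Q\to 0$ specialisation together with the recursion of Proposition~\ref{prop:rec}. By \cite[Definition~3.18]{LM}, the quantum Grothendieck polynomial $\FG^{\Q}_{x}$ is the image of the ordinary Grothendieck polynomial $\mathfrak{G}_{x}$ under the $K$-theoretic quantization map $\phi$ of \cite{LM}, a $\BZ[\Q]$-linear map that replaces the elementary symmetric polynomials $e_{p}(x_{1},\dots,x_{k})$ by certain quantum deformations of them and, in particular, sends $\mathfrak{G}_{(k-p+1,\dots,k,k+1)}$ to $\G{k}{p}$. Setting all $\Q_{i}=0$ specialises $\phi$ to the identity, so \eqref{eq:LM61} degenerates to $(1-x_{k})\mathfrak{G}_{x}=\sum_{\bm}(-1)^{\ell(\bm_{(k,*)})}\mathfrak{G}_{\ed(\bm)}$, summed over the $k$-Monk chains with no quantum edge (the factor $1-\Q_{k}$ and every monomial $\Q(\bm)$ involving a quantum edge drop out); this is the classical $K$-theoretic Monk formula for Grothendieck polynomials, which is well known, so the real content of the proposition is the precise shape of the $\Q$-dependence.

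To extract that shape I would argue as follows. First, expand $(1-x_{k})\mathfrak{G}_{x}$ by the classical $K$-theory Monk formula; its terms are governed by the Bruhat edges $x\edge{(a,k)}x\cdot(a,k)$, whose precise shape --- $x(a)<x(k)$ with no intermediate $x(c)\in[x(a),x(k)]$ --- is exactly the criterion of Lemma~\ref{lem:edge}~(B), and iterating produces the strings $(a_{1},k),\dots,(a_{s},k)$ with $k>a_{1}>\cdots>a_{s}$, i.e.\ the $(*,k)$-segments of $k$-Monk chains. Next, apply $\phi$ and compare with $(1-\Q_{k})(1-x_{k})\FG^{\Q}_{x}$: the discrepancy reflects exactly the fact that $\phi$ fails to commute with multiplication by $1-x_{k}$, and Proposition~\ref{prop:rec} is the quantitative form of that failure at the level of the generating classes $\G{k}{p}$. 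Iterating Proposition~\ref{prop:rec} rewrites the discrepancy as a sum over a second, now decreasing, string of transpositions $(k,b_{t}),\dots,(k,b_{1})$ with $k<b_{1}<\cdots<b_{t}$: the quantum parameters emerging from the factors $(1-\Q_{j})^{\pm 1}$ in Proposition~\ref{prop:rec} assemble precisely into the monomials $\Q_{k}\Q_{k+1}\cdots\Q_{b-1}$ attached to the quantum edges $(k,b)$ in the definition of $\Q(\bm)$, the number of steps produces the sign $(-1)^{\ell(\bm_{(k,*)})}$, and the admissibility of these edges --- again read off from Lemma~\ref{lem:edge}~(Q) --- forces the constraint $k<b_{1}<\cdots<b_{t}$. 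The commutations needed to bring the intermediate terms into the standard order of Definition~\ref{dfn:monk} are precisely those provided by Lemma~\ref{lem:int}, i.e.\ manipulations of the type carried out by Algorithm~$(\bs:(k,d))$.

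The main obstacle is this last matching. One must show that iterating Proposition~\ref{prop:rec} against the classical expansion of $(1-x_{k})\mathfrak{G}_{x}$ produces each $\bm\in\SM_{k}(x)$ exactly once, with the correct sign $(-1)^{\ell(\bm_{(k,*)})}$ and the correct monomial $\Q(\bm)$, and --- crucially --- that all of the auxiliary terms generated along the way cancel in pairs, so that no spurious $\FG^{\Q}_{w}$ survives and no genuine term is lost. Controlling these cancellations, and the possible overlaps between the $(*,k)$-half and the $(k,*)$-half of a chain, is the delicate point; it is here that Lemma~\ref{lem:int}, together with a careful induction (on $\ell_{\infty}(x)$, or on the number of ``levels'' moved), has to carry the argument. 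Once the Monk formula is established in this form, it is exactly the input that launches the inductive proof of the Pieri-type formula of Theorem~\ref{thm:pieri}.
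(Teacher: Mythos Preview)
The paper does not prove Proposition~\ref{prop:LM61} at all: it is quoted verbatim from \cite[Theorem~6.1]{LM}, and the entire inductive machinery of Sections~\ref{sec:main}--\ref{sec:prfmat3} takes it as an \emph{input}, together with Proposition~\ref{prop:rec}, to prove the Pieri formula. So there is nothing in the present paper to compare your argument against.

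That said, your proposed route has a genuine gap. Proposition~\ref{prop:rec} is a recursion among the \emph{special} polynomials $\G{k}{p}$; it says nothing about how multiplication by $(1-\Q_{k})(1-x_{k})$ acts on an arbitrary $\FG^{\Q}_{x}$. You write that ``Proposition~\ref{prop:rec} is the quantitative form of that failure'' (of $\phi$ to commute with $1-x_{k}$) and that iterating it ``rewrites the discrepancy as a sum over a second \dots\ string of transpositions $(k,b_{t}),\dots,(k,b_{1})$ with $k<b_{1}<\cdots<b_{t}$,'' with the factors $(1-\Q_{j})^{\pm 1}$ assembling into $\Q_{k}\Q_{k+1}\cdots\Q_{b-1}$. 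But Proposition~\ref{prop:rec} only involves $\Q_{k-1}$ and $\Q_{k}$, and it relates $\G{k}{p}$ to $\G{k-1}{*}$ and $\G{k-2}{*}$, i.e.\ it moves \emph{down} in $k$; it cannot by itself manufacture the monomials $\Q_{k}\Q_{k+1}\cdots\Q_{b-1}$ attached to quantum edges $(k,b)$ with $b>k$. There is no mechanism in your outline that converts a recursion among the $\G{k}{p}$ into an identity for $(1-\Q_{k})(1-x_{k})\FG^{\Q}_{x}$ with $x$ arbitrary. In \cite{LM} the Monk formula is proved by entirely different means (operator identities coming from the quantization map and the Cauchy-type construction of the $\FG^{\Q}_{w}$), and in the present paper it is simply imported; your sketch does not supply an alternative proof.
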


%
\subsection{Main result: Pieri-type multiplication formula.}
\label{subsec:pieri}

We define a total order $\preceq$ on the set $T_{\infty}=\bigl\{ (a,b) \mid 
\text{$a,b \in \BZ_{+}$ with $a < b$} \bigr\}$ of transpositions in $S_{\infty}$ by
\begin{equation}
(a,b) \prec (c,d) \stackrel{\mathrm{def}}{\iff} 
\text{($b > d$) or ($b=d$ and $a < c$)}. 
\end{equation}

For each $k \ge 1$, we set
$\SL_{k}:=\bigl\{ (a,b) \in T_{\infty} \mid a \le k < b \bigr\}$.
%
%
\begin{dfn} \label{dfn:pieri}
Let $w \in S_{\infty}$ and $k \ge 1$. 
A directed path 
\begin{equation*}
\bp= (w\,;\,(a_{1},b_{1}),\dots,(a_{r},b_{r}))
\end{equation*}
in $\QBG(S_{\infty})$ is called a $k$-Pieri chain 
if it satisfies the following conditions: 
\begin{enu}
\item[(P0)] $(a_{s},b_{s}) \in \SL_{k}$ for all $1 \le s \le r$, and 
$n_{(a,b)}(\bp) \in \{0,1\}$ for each $(a,b) \in \SL_{k}$; 
\item[(P1)] $b_{1} \ge b_{2} \ge \cdots \ge b_{r}$; 
\item[(P2)] If $r \ge 3$, and if $a_{t}=a_{s}$ 
for some $1 \le t < s \le r-1$, 
then $(a_{s},b_{s}) \prec (a_{s+1},b_{s+1})$. 
\end{enu}
\end{dfn}

Let $\sfpw{k}{}=\sfp{k}{}$ denote the set of 
all $k$-Pieri chains starting at $w \in S_{\infty}$.

\begin{ex}[see also Appendix~\ref{sec:example}] \label{ex:pieri}
Recall the notation from Example~\ref{ex:algo}. 
We can find some examples of $k$-Pieri chains in \cite{LS}, whose edges are all Bruhat edges. 
For example, 
\begin{align*}
& \bp_{1}:=(w = 4261735; (1,7)_{\SB}, (2,6)_{\SB}, (4,6)_{\SB}, (3,5)_{\SB}, (2,5)_{\SB}) \in \sfp{4}{}, \\
& \bp_{2}:=(w = 215436; (3,6)_{\SB}, (1,5)_{\SB}, (2,5)_{\SB}, (1,4)_{\SB}) \in \sfp{3}{}, \\
& \bp_{3}:=(w = 52173846; (1,8)_{\SB}, (5,7)_{\SB}, (2,7)_{\SB}, (3,7)_{\SB}, (4,6)_{\SB}, (1,6)_{\SB}, (5,6)_{\SB}) \in \sfp{5}{}. 
\end{align*}
Also, we have the following $4$-Pieri chain, with $w = 7465321$: 
\begin{equation*}
\bp_{4}:= (w = 7465321; (4,7)_{\SQ}, (2,7)_{\SB}, (5,7)_{\SB}, (4,6)_{\SB}) \in \sfp{4}{}. 
\end{equation*}
However, the directed path
\begin{equation*}
(7465321; (4,7)_{\SQ}, (2,7)_{\SB}, (5,7)_{\SB}, (4,6)_{\SB}, (3,6)_{\SQ})
\end{equation*}
is not an element of $\sfp{4}{}$ 
since it does not satisfy condition (P2); notice that $(4,6) \succ (3,6)$. 
\end{ex}

Let $\bp \in \sfp{k}{}$. We see by (P1) in Definition~\ref{dfn:pieri} that
for each $m \ge k+1$, there exists a unique longest (possibly, empty) segment 
in $\bp$ in which all labels are contained in $\{ (a,m) \mid 1 \le a \le k\}$. 
We call this segment the $(*,m)$-segment of $\bp$, and denote it by $\bp_{(*,m)}$; 
we can write $\bp$ as: 
\begin{equation*}
\bp=(w\,;\,\dots,\bp_{(*,m+1)},\bp_{(*,m)},\bp_{(*,m-1)},\dots,\bp_{(*,k+1)}).
\end{equation*}
Also, if a label $(a,m)$ appears in $\bp_{(*,m)}$, 
then we denote by $\bp_{(*,m)}^{(a,m)}$ the segment in $\bp_{(*,m)}$ 
consisting of all labels appearing after the label $(a,m)$. 
%
%
\begin{dfn} \label{dfn:mark}
Let $w \in S_{\infty}$, and $k \ge 1$, $0 \le p \le k$. 
Let $\bp= (w\,;\,(a_{1},b_{1}),\dots,(a_{r},b_{r})) \in \sfp{k}{}$; 
recall that all the labels in $\bp$ are distinct 
(see (P0) in Definition~\ref{dfn:pieri}). 
A subset $M$ of the set $\{(a_{s},b_{s}) \mid 1 \le s \le r\}$ of labels in $\bp$, 
with $\#M=p$, is called a $p$-marking of $\bp$ if it satisfies the following conditions:
\begin{enu}
\item if $(a_{s},b_{s}) \in M$, then $a_{u} \ne a_{s}$ for all $1 \le u < s$; 
\item if $(a_{s},b_{s}) \notin M$ and $s < r$, 
then $(a_{s},b_{s}) \prec (a_{s+1},b_{s+1})$; 
\item if $b_{1} = b_{2} = \cdots = b_{t}$ and $a_{1} > a_{2} > \cdots > a_{t}$
for some $t \ge 1$, then $(a_{t},b_{t}) \in M$. 
\end{enu}
\end{dfn}

Let $\Mark_{p}(\bp)$ denote the set of $p$-markings of $\bp$, 
and denote by $\sfpw{k}{p}=\sfp{k}{p}$ the subset of $\sfpw{k}{}=\sfp{k}{}$ 
consisting of all elements having $p$-markings. We set
\begin{equation}
\hspw{k}{p} = 
\hspw{k}{p}(w):= 
 \bigl\{ (\bp,M) \mid \bp \in \sfp{k}{p},\,M \in \Mark_{p}(\bp) \bigr\}.
\end{equation}

\begin{ex}[see also Appendix~\ref{sec:example}]
Recall the notation from Example~\ref{ex:algo}. 
Let $\bp_{1}$ be as in Example~\ref{ex:pieri}. 
Let us compute $\Mark_{p}(\bp_{1})$ for $0 \le p \le k = 4$. 
By condition (3), the label $(1,7)$ must be contained in all $p$-markings. 
Since $(3,5) \succ (2,5)$, it follows from condition (2) that 
the label $(3,5)$ must be contained in all $p$-markings. 
Hence we have $\Mark_{p}(\bp_{1}) = \emptyset$ for $p=0,1$. 
By condition (1), the label $(2,5)$ cannot be contained in any $p$-marking. 
We deduce (cf. the example after \cite[Definition 1.10]{LS}) that
\begin{align*}
& \Mark_{2}(\bp_{1}) = \bigl\{ \{(1,7), (3,5)\} \bigr\}, \quad
  \Mark_{3}(\bp_{1}) = \bigl\{ \{(1,7), (2,6), (3,5)\},\,\{(1,7), (4,6), (3,5)\} \bigr\},\\
& \Mark_{4}(\bp_{1}) = \bigl\{ \{(1,7), (2,6), (4,6), (3,5)\} \bigr\}. 
\end{align*}

Next, let $\bp_{4}$ be as in Example~\ref{ex:pieri}. 
Let us compute $\Mark_{p}(\bp_{4})$ for $0 \le p \le k = 4$. 
By condition (3), the labels $(4,7)$ and $(2,7)$ must be contained in all $p$-markings;  
in particular, we have $\Mark_{p}(\bp_{4}) = \emptyset$ for $p=0,1$. 
By condition (1), the label $(4,6)$ cannot be contained in any $p$-marking. 
We deduce that
\begin{align*}
& \Mark_{2}(\bp_{4}) = \bigl\{ \{(4,7), (2,7)\} \bigr\}, \quad
  \Mark_{3}(\bp_{4}) = \bigl\{ \{(4,7), (2,7), (5,7)\}  \bigr\}, \quad
  \Mark_{4}(\bp_{4}) = \emptyset. 
\end{align*}
\end{ex}

The following is the main result of this paper, 
which implies \cite[Conjecture~6.7]{LM}. 
%
%
\begin{thm} \label{thm:pieri}
Let $k \ge 1$ and $0 \le p \le k$. 
For an arbitrary $w \in S_{n}$, the following equalities hold in $\K_{\infty}$\,{\rm:}
\begin{equation} \label{eq:pieri}
\begin{split}
\FG^{\Q}_{w} \G{k}{p} 
& = \sum_{(\bp,M) \in \hsp{k}{p}} (-1)^{\ell(\bp)-p} \Q(\bp) \FG^{\Q}_{\ed(\bp)} \\[3mm]
& = \sum_{\bp \in \sfp{k}{p}} (-1)^{\ell(\bp)-p} (\# \Mark_{p}(\bp)) \Q(\bp) \FG^{\Q}_{\ed(\bp)}. 
\end{split}
\end{equation}
\end{thm}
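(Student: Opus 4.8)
The natural strategy is induction on $k$, reducing the Pieri formula to the Monk-type formula (Proposition~\ref{prop:LM61}) together with the recursion \eqref{eq:rec} for the polynomials $\G{k}{p}$. First I would rewrite \eqref{eq:rec} in the form
\begin{equation*}
(1-\Q_{k})(1-x_{k})\bigl(\G{k-1}{p}-\G{k-1}{p-1}-\Q_{k-1}\G{k-2}{p-1}+\Q_{k-1}\G{k-2}{p-2}\bigr)
=(1-\Q_{k-1})\bigl(\G{k}{p}-\G{k-1}{p-1}\bigr),
\end{equation*}
multiply through by $\FG^{\Q}_{w}$, and apply Proposition~\ref{prop:LM61} with $x=w$ to the left-hand side: each term $(1-\Q_{k})(1-x_{k})\FG^{\Q}_{w}\G{k-1}{\bullet}$ becomes a sum over $k$-Monk chains starting from $w$, while by the induction hypothesis $\FG^{\Q}_{w}\G{k-1}{\bullet}$ is itself a signed sum over $(k-1)$-Pieri chains. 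Composing, the left-hand side becomes a signed sum over pairs consisting of a $(k-1)$-Pieri chain followed by a $k$-Monk chain. The goal is then to massage this into $(1-\Q_{k-1})$ times a signed sum over $k$-Pieri chains with markings, matching the right-hand side term by term, i.e.\ to set up a sign-reversing/weight-preserving bijection between the two indexing sets.

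The technical heart of the argument is a \emph{straightening} or \emph{insertion} procedure: given a $(k-1)$-Pieri chain $\bp'$ followed by a $k$-Monk chain $\bm$, one must reorganize the concatenated sequence of transpositions so that the $(*,k)$-labels of $\bm$ get absorbed into $\bp'$ in the correct order dictated by conditions (P1) and (P2), producing a genuine $k$-Pieri chain, while the $(k,b)$-labels with $b>k$ of $\bm$ get pushed to the front and reinterpreted as $(*,b)$-segments. Algorithm~$(\bs:(k,d))$ introduced in Section~\ref{subsec:not}, built on Lemma~\ref{lem:int}\,(4), is exactly the local move that performs this absorption: it rewrites $\bigl(\ldots,(j_{1},k),\ldots,(j_{t},k),(k,d)\bigr)$ either by commuting $(k,d)$ leftward past the column or by terminating when an obstruction of type (iib) is met, which is where the marking $M$ and the factor $(\#\Mark_{p}(\bp))$ (equivalently the sum over $(\bp,M)\in\hsp{k}{p}$) will come from. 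I would package this as three separate combinatorial propositions — this is clearly the role of Propositions~\ref{prop:mat1}, \ref{prop:mat2}, \ref{prop:mat3} referenced in the introduction — handling respectively: (i) that the straightening is well-defined and lands in $\sfp{k}{p}$; (ii) that it is a bijection onto its image with the stated sign $(-1)^{\ell(\bp)-p}$ and monomial $\Q(\bp)$; and (iii) that the remaining terms (those not in the image, or cancelling pairs) sum to zero, which accounts for the $(1-\Q_{k-1})$ factor on the right via a separate sign-reversing involution.

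The base case $k=1$ forces $p\in\{0,1\}$: for $p=0$ both sides are $\FG^{\Q}_{w}$, and for $p=1$ the formula reduces directly to the Monk formula $(1-\Q_{1})(1-x_{1})\FG^{\Q}_{w}=\ldots$ since $\G{1}{1}=\FG^{\Q}_{(1,2)}=1-x_{1}$ and $\SL_{1}$-chains are automatically $1$-Monk chains, so Proposition~\ref{prop:LM61} applies verbatim after checking that $\Mark_{1}$ accounts for the sign $(-1)^{\ell(\bp)-1}$ versus $(-1)^{\ell(\bm_{(1,*)})}$. For the inductive step one must also be careful that the recursion \eqref{eq:rec}, which a priori lives in the localized ring $\K_{\infty}'$, yields an identity in $\K_{\infty}$ — but since the claimed formula \eqref{eq:pieri} is a polynomial identity and \eqref{eq:rec} holds after inverting the $(1-\Q_{i})$, it suffices to prove \eqref{eq:pieri} in $\K_{\infty}'$ and invoke injectivity of $\K_{\infty}\hookrightarrow\K_{\infty}'$. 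The main obstacle, by far, is controlling the combinatorics of the straightening: one must verify that conditions (P0)--(P2) and the three marking conditions in Definition~\ref{dfn:mark} are precisely preserved, with no spurious chains produced and no valid chain missed, and that the signs and quantum monomials track correctly through every instance of Lemma~\ref{lem:int}\,(1)--(4) — in particular that a Bruhat/quantum edge type can change under commutation only in the ways the $\Q(\bp)$ bookkeeping anticipates. This is where essentially all the work of the paper will reside, and it is why the proof is deferred to Sections~\ref{sec:prfmat1}--\ref{sec:prfmat3} plus the appendices.
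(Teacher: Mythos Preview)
Your plan is essentially the paper's own approach: induction on $k$ via the recursion \eqref{eq:rec}, the induction hypothesis followed by the Monk formula (Proposition~\ref{prop:LM61}) to produce sums over pairs $(\bp\mid\bm)$ of a $(k-1)$- or $(k-2)$-Pieri chain followed by a $k$-Monk chain, and then three matching propositions (Propositions~\ref{prop:mat1}--\ref{prop:mat3}) that set up sign- and weight-compatible bijections to collapse everything to $\hsp{k}{p}$. Two minor corrections: the Monk formula is applied with $x=\ed(\bp)$, not $x=w$ (your own phrase ``a $(k-1)$-Pieri chain followed by a $k$-Monk chain'' already reflects this), and the three propositions are organized differently from your (i)--(iii) --- Proposition~\ref{prop:mat1} alone consists of eight bijections whose purpose is to eliminate the factor $(1-\Q_{k-1})^{-1}$, while the insertion/deletion procedure built on Algorithm~$(\bs:(k,d))$ enters only in the proof of Proposition~\ref{prop:mat3} --- but this is bookkeeping, not strategy.
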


For a few examples, see Appendix~\ref{sec:example}.

%
\begin{rem} \label{rem:mark}
Keep the setting of Theorem~\ref{thm:pieri}. 
For $\bp= (w\,;\,(a_{1},b_{1}),\dots,(a_{r},b_{r})) \in \sfp{k}{p}$, 
we set $m_{0}(\bp):=\# \bigl\{ 1 \le a \le k \mid n_{(a,*)}(\bp) \ge 1 \bigr\}$. 
It follows from condition (1) in Definition~\ref{dfn:mark} that 
$p \le m_{0}(\bp)$. Also, if we set
\begin{equation*}
\begin{split}
M(\bp):= \, & \bigl\{t \ge 1 \mid 
\text{$b_{1} = b_{2} = \cdots = b_{t}$ and $a_{1} > a_{2} > \cdots > a_{t}$} \bigr\} \\
& \cup \bigl\{1 \le s \le r-1 \mid (a_{s},b_{s}) \succ (a_{s+1},b_{s+1}) \bigr\}, 
\end{split}
\end{equation*}
and $m(\bp):=\# M(\bp)$, then by conditions (2) and (3) in  Definition~\ref{dfn:mark}, 
we see that $M(\bp) \subset M$ for all $M \in \Mark_{p}(\bp)$. 
In addition, we have
\begin{equation*}
\# \Mark_{p}(\bp) = 
\binom{ m_{0}(\bp) - m(\bp) }{ p - m(\bp) }. 
\end{equation*}
\end{rem}

\begin{rem} \label{rem:gen} \mbox{}
\begin{enu}
\item In the special case that $p=1$, formula \eqref{eq:pieri} in Theorem~\ref{thm:pieri} is just \cite[Theorem~6.4]{LM}; 
indeed, the starting point of our inductive argument 
for the proof of Theorem~\ref{thm:pieri} 
is established by \cite[Theorem~6.4]{LM}. 
%
%
\item If we set $\Q_{1} = \Q_{2} = \cdots = 0$ in Theorem~\ref{thm:pieri}, then 
we obtain the classical Pieri formula (\cite[Theorem~1.12]{LS}) for the multiplication of Grothendieck polynomials. 
This assertion follows immediately 
from the fact that for $\bp \in \sfp{k}{p}$, $\Q(\bp) \ne 1$ 
if and only if 
$\bp$ has a quantum edge, and the fact that the specialization of 
the quantum Grothendieck polynomial $\FG^{\Q}_{w}$ at $\Q_{1} = Q_{2} = \cdots = 0$ 
is identical to the Grothendieck polynomial $\FG_{w}$ (see \cite[Proposition~3.22\,(2)]{LM}). 

\item We know from \cite[Proposition~3.22\,(2)]{LM} that 
the lowest homogeneous component of 
the quantum Grothendieck polynomial $\FG^{\Q}_{w}$ is identical to 
the quantum Schubert polynomial $\FS^{\Q}_{w}$. 
By taking the lowest homogeneous components 
on both sides of equation \eqref{eq:pieri}, we obtain the Pieri formula 
(\cite[Corollary~4.3]{P}) for the multiplication of quantum Schubert polynomials. 
\end{enu}
\end{rem}

Now, let 
\begin{equation*}
\BD:= \bigl\{ d=(d_{1},d_{2},\dots) \in \BZ^{\infty} \mid 
\text{$d_{i} = 0$ for all but finitely many $i \ge 1$} \bigr\}, 
\end{equation*}
and set $\Q^{d}:=\prod_{i \ge 1} Q_{i}^{d_{i}}$ for 
$d=(d_{1},d_{2},\dots) \in \BD$. Let $w \in S_{\infty}$, and 
$v := (k-p+1, k-p+2, \dots, k,k+1) \in S_{\infty}$; 
recall from \eqref{eq:ckp} that $\G{k}{p} = \FG^{\Q}_{v}$. 
We define $N_{w,v}^{u}(d) \in \BZ$ for $u \in S_{\infty}$ and $d \in \BD$ by: 
\begin{equation}
\FG^{\Q}_{w} \G{k}{p} = \sum_{d \in \BD} Q^{d} \sum_{u \in S_{\infty}}
N_{w,v}^{u}(d) \FG^{\Q}_{u}. 
\end{equation}
We obtain the following corollary of Theorem~\ref{thm:pieri}, 
which implies that the positivity conjecture (\cite[Conjecture~7.5]{LM}) holds true in the special case that 
$v$ is the cyclic permutation $(k-p+1, k-p+2, \dots, k,k+1)$; 
note that we should define the degree of $Q_{i}$ 
to be $2$ for all $i \ge 1$, and replace $(-1)^{\sum_{i} d_{i}}$ in 
\cite[Conjecture~7.5]{LM} by $(-1)^{\sum_{i}(2d_{i})}=1$.
%
%
\begin{cor} \label{cor:pos}
For $v = (k-p+1, k-p+2, \dots, k,k+1) \in S_{\infty}$, 
the following positivity holds for all $u,w \in S_{\infty}$ and $d \in \BD$\,{\rm:} 
\begin{equation*}
(-1)^{\ell(u) - \ell(w) - \ell(v)} N_{w,v}^{u}(d) \ge 0.
\end{equation*}
\end{cor}

\begin{proof}
Fix $u \in S_{\infty}$ and $d \in \BD$ arbitrarily. 
From Theorem~\ref{thm:pieri}, we deduce that 
\begin{equation*}
N_{w,v}^{u}(d) = 
\sum_{ \begin{subarray}{c} 
(\bp,M) \in \hsp{k}{p} \\ 
\ed(\bp)=u,\,\Q(\bp)=\Q^{d} 
\end{subarray} } (-1)^{\ell(\bp)-p}. 
\end{equation*}
Let $(\bp,M) \in \hsp{k}{p}$ be such that 
$\ed(\bp)=u$ and $\Q(\bp)=\Q^{d}$. 
Since the directed path $\bp$ starts at $w$ and ends at $u$, 
it is easily verified that 
$(-1)^{\ell(\bp)} = (-1)^{\ell(w)-\ell(u)}$. 
Also, we have $\ell(v) = \ell((k-p+1, k-p+2, \dots, k,k+1)) = p$. 
Therefore, we conclude that 
\begin{align*}
(-1)^{\ell(u) - \ell(w) - \ell(v)}N_{w,v}^{u}(d) 
 & = 
\sum_{ \begin{subarray}{c} 
(\bp,M) \in \hsp{k}{p} \\ 
\ed(\bp)=u,\,\Q(\bp)=\Q^{d} 
\end{subarray} } (-1)^{\ell(u) - \ell(w) - \ell(v) - \ell(\bp) - p} =  
\sum_{ \begin{subarray}{c} 
(\bp,M) \in \hsp{k}{p} \\ 
\ed(\bp)=u,\,\Q(\bp)=\Q^{d} 
\end{subarray} } 1 \ge 0.
\end{align*}
This proves the corollary. 
\end{proof}

%
\section{Proof of Theorem~\ref{thm:pieri}.}
\label{sec:main}

%
\subsection{Outline of the proof.}
\label{subsec:outline}

Let us fix an arbitrary $w \in S_{\infty}$. We will prove Theorem~\ref{thm:pieri} by induction on $k$. 
It is obvious that Theorem~\ref{thm:pieri} holds for 
$k \ge 1$ and $p=0$. Also, we know from \cite[Theorem~6.4]{LM} 
that Theorem~\ref{thm:pieri} holds for $k \ge 1$ and $p=1$. 
Hence Theorem~\ref{thm:pieri} holds for $k = 1$.
Let us assume that $k \ge 2$. 

\vspace{3mm}

In the following, we use the notation: 
\begin{equation*}
\spmw{\h}{\g} = \spm{\h}{\g}:=
 \bigl\{ (\bp \mid \bm) \mid \bp \in \sfp{\h}{\g},\,\bm \in \SM_{k}(\ed(\bp)) \bigr\}, 
\end{equation*}
\begin{equation*}
\hspmw{\h}{\g}=\hspm{\h}{\g}:=
 \bigl\{ ((\bp,M) \mid \bm) \mid (\bp,M) \in \hsp{\h}{\g},\,\bm \in \SM_{k}(\ed(\bp)) \bigr\}
\end{equation*}
for $(\h,\g) \in \{(k-1,p-1),\,(k-1,p),\,(k-2,p-1),\,(k-2,p-2)\}$. 

By \eqref{eq:rec}, we have
\begin{equation} \label{eq:rec1}
\begin{split}
\FG^{\Q}_{w} \G{k}{p}  = \, & \FG^{\Q}_{w} \G{k-1}{p-1} + 
  (1-\Q_{k})(1-x_{k})(1-\Q_{k-1})^{-1} \times \\
& \left( ( \FG^{\Q}_{w} \G{k-1}{p} - \Q_{k-1} \FG^{\Q}_{w} \G{k-2}{p-1} ) -  
         ( \FG^{\Q}_{w} \G{k-1}{p-1} - \Q_{k-1} \FG^{\Q}_{w} \G{k-2}{p-2} ) \right)
\end{split}
\end{equation}
in $\K_{\infty}'$. By the induction hypothesis and Proposition~\ref{prop:LM61}, 
we deduce that for each $(\h,\g) \in \{ (k-1,p-1),\,(k-1,p),\,(k-2,p-1),\,(k-2,p-2) \}$, 
\begin{align}
& (1-\Q_{k})(1-x_{k}) \FG^{\Q}_{w} \G{\h}{\g} = 
  \sum_{(\bp,M) \in \hspw{\h}{\g} = \hsp{\h}{\g}} (-1)^{\ell(\bp)- g} \Q(\bp) 
  (1-\Q_{k})(1-x_{k}) \FG^{\Q}_{\ed(\bp)} \nonumber \\[3mm]
& = \sum_{\bq=((\bp,M) \mid \bm) \in \hspmw{\h}{\g}} 
    \bF{\h}{\g}{\bq}
= \CSF{\h}{\g} \label{eq:rec2}
\end{align}
in $\K_{\infty} \subset \K_{\infty}'$, 
where for $\bq = ((\bp,M) \mid \bm) \in \hspmw{\h}{\g}$, we set 
\begin{equation} \label{eq:bF}
\bF{\h}{\g}{\bq}:=
  (-1)^{\ell(\bp)-\g+\ell(\bm_{(k,*)}) } \Q(\bp)\Q(\bm) \FG^{\Q}_{\ed(\bm)}, 
\end{equation}
and then 
\begin{equation} \label{eq:CSF}
(\CSF{\h}{\g})_{X}:=\sum_{\bq \in X} \bF{\h}{\g}{\bq} 
  \quad \text{for $X \subset \hspmw{\h}{\g}$}, \qquad 
\CSF{\h}{\g}:=(\CS_{\g}^{\h})_{\hspmw{\h}{\g}}. 
\end{equation}
%
%
\begin{rem} \label{rem:empty}
We identify $\hspw{\h}{\g}$ with 
%
%
\begin{equation} \label{eq:empty}
(\hspmw{\h}{\g})_{\emptyset} = (\hspmw{\h}{\g})_{\bm = \emptyset}:=
\bigl\{ ((\bp,M) \mid \bm) \in \hspmw{\h}{\g} \mid 
\bm = \emptyset \bigr\} \subset \hspmw{\h}{\g}; 
\end{equation}
under this identification, a subset $X$ of $\hspw{\h}{\g}$ is 
identified with 
$\bigl\{ ((\bp,M) \mid \bm) \in \hspmw{\h}{\g} \mid (\bp,M) \in X,\,
\bm = \emptyset \bigr\} \subset (\hspmw{\h}{\g})_{\emptyset} \subset \hspmw{\h}{\g}$. 
Let $(\bp,M) \in \hspw{\h}{\g}$, and set 
$\bq := ((\bp,M) \mid \bm)$ with $\bm = \emptyset$. 
Since $\ell(\bm_{(k,*)}) = 0$, $\Q(\bm) =1$, $\ed(\bm)=\ed(\bp)$, 
we see that $\bE{\h}{\g}{\bp,M}:=(-1)^{\ell(\bp)-g} \Q(\bp) \FG^{\Q}_{\ed(\bp)}$ 
is identical to $\bF{\h}{\g}{\bq}$; see \eqref{eq:bEkp} below. 
\end{rem}

By the induction hypothesis, we have
\begin{equation} \label{eq:rec3}
\FG^{\Q}_{w} \G{k-1}{p-1} = 
  \sum_{(\bp,M) \in \hspw{k-1}{p-1}} (-1)^{\ell(\bp)-p} \Q(\bp) \FG^{\Q}_{\ed(\bp)}
= \underbrace{ (\CSF{k-1}{p-1})_{(\hspmw{k-1}{p-1})_{\bm = \emptyset}} }_{ =: \, (\CSF{k-1}{p-1})_{\emptyset} }. 
\end{equation}
Substituting \eqref{eq:rec2} and \eqref{eq:rec3} into \eqref{eq:rec1}, 
we obtain 
\begin{align}
\FG^{\Q}_{w} \G{k}{p} = \, & (\CSF{k-1}{p-1})_{\emptyset} + 
 (1-\Q_{k-1})^{-1} \times \nonumber \\
& \left( ( \CSF{k-1}{p} - \Q_{k-1} \CSF{k-2}{p-1} ) -  
         ( \CSF{k-1}{p-1} - \Q_{k-1} \CSF{k-2}{p-2} ) \right) \label{eq:Ind1}
\end{align}
in $\K_{\infty}'$. 

\vspace{3mm}

\paragraph{\bf Step 1 (Sections~\ref{subsec:decomp1} and \ref{subsec:mat1}).}
In Section~\ref{subsec:decomp1}, we decompose 
$\hspmw{k-1}{\g}=\hspm{\h}{\g}$ and $\hspmw{k-2}{\g-1}=\hspm{\h}{\g-1}$, 
with $g \in \{p-1,p\}$, into some subsets; 
see \eqref{eq:AB} and \eqref{eq:CD}, respectively. 
We list the conditions appearing in these decompositions 
in \eqref{eq:cond1a} and \eqref{eq:cond1b}. 
Under these decompositions, we can rewrite the term
$(1-\Q_{k-1})^{-1}( \CSF{k-1}{\g} - \Q_{k-1} \CSF{k-2}{\g-1} )$, $\g \in  \{p-1,p\}$, 
which appear on the right-hand side of \eqref{eq:Ind1}, as follows (see \eqref{eq:mat1a}): 
\begin{align}
& (1-\Q_{k-1})^{-1} 
  ( \CSF{k-1}{\g} - \Q_{k-1} \CSF{k-2}{\g-1} ) 
  = (1-\Q_{k-1})^{-1} \times \nonumber \\[1.5mm]
& \left(
\sum_{
\begin{subarray}{c}
\spadesuit \in \{ \RA,\,\RB_1,\,\RB_2,\,\RB_3 \} \\[1mm]
\clubsuit \in \{\RX,\,\RY \}
\end{subarray} } 
(\CSF{k-1}{\g})_{\spadesuit\clubsuit} - Q_{k-1}
\sum_{
\begin{subarray}{c}
\spadesuit \in \{ \RC,\,\RD_{11},\,\RD_{12},\,\RD_2 \} \\[1mm]
\clubsuit \in \{\RX,\,\RY \}
\end{subarray} } 
(\CSF{k-2}{\g-1})_{\spadesuit\clubsuit}
\right), \label{eq:step1}
\end{align}
where $(\CSF{\h}{\g})_{\spadesuit\clubsuit}$ 
denotes the sum of $\bF{\h}{\g}{\bq}$ over the elements $\bq \in \hspmw{\h}{\g}$ 
satisfying conditions $\spadesuit$ and $\clubsuit$. 

In Section~\ref{subsec:mat1}, we state Proposition~\ref{prop:mat1a}, 
which follows from Proposition~\ref{prop:mat1}. 
%
By the equalities in Proposition~\ref{prop:mat1a}, 
we can rewrite \eqref{eq:Ind1} as: 
\begin{align}
& \FG^{\Q}_{w} \G{k}{p} = (\CSF{k-1}{p-1})_{\emptyset} \nonumber \\
& + \bigl(
   (\CSF{k-1}{p})_{\RA\RY} + (\CSF{k-1}{p})_{\RB_2\RY} + 
   (\CSF{k-1}{p})_{\RB_3\RY} - Q_{k-1} (\CSF{k-2}{p-1})_{\RD_2\RY} \bigr) \nonumber \\
& - \bigl(
   (\CSF{k-1}{p-1})_{\RA\RY} + (\CSF{k-1}{p-1})_{\RB_2\RY} + 
   (\CSF{k-1}{p-1})_{\RB_3\RY} - Q_{k-1} (\CSF{k-2}{p-2})_{\RD_2\RY} \bigr). \label{eq:Ind2x}
\end{align}

\vspace{3mm}

\paragraph{\bf Step 2 (Sections~\ref{subsec:decomp2} and \ref{subsec:mat2}).}
Let $g \in \{p-1,p\}$. 
In Section~\ref{subsec:decomp2}, we decompose the subsets 
$(\hspmw{k-1}{\g})_{\RA\RY}$, $(\hspmw{k-1}{\g})_{\RB_2\RY}$, and 
$(\hspmw{k-1}{\g})_{\RB_3\RY}$ of $\hspmw{k-1}{\g}$, 
which corresponds to the terms 
$(\CSF{k-1}{\g})_{\RA\RY}$, $(\CSF{k-1}{\g})_{\RB_2\RY}$, 
and $(\CSF{k-1}{\g})_{\RB_3\RY}$ in \eqref{eq:Ind2x}, respectively, 
into some subsets; 
see \eqref{eq:RARY}, \eqref{eq:decB23Y}, and \eqref{eq:decB23}. 
We list the conditions appearing in these decompositions 
in \eqref{eq:cond2}. 

In Section~\ref{subsec:mat2}, we state Proposition~\ref{prop:mat2a}, 
which follows from Proposition~\ref{prop:mat2}. 
Combining the equalities in Proposition~\ref{prop:mat2a} and 
those obtained by replacing $\hspmw{}{}$ in 
\eqref{eq:RARY} and \eqref{eq:decB23} by $\CSF{}{}$, we see that
\begin{align*}
&  (\CSF{k-1}{\g})_{\RA\RY} + (\CSF{k-1}{\g})_{\RB_2\RY} + 
   (\CSF{k-1}{\g})_{\RB_3\RY} - Q_{k-1} (\CSF{k-2}{\g-1})_{\RD_2\RY} \\
& = (\CSF{k-1}{\g})_{\RA_1 \RY_2} + (\CSF{k-1}{\g})_{\RA_3 \RY_2}+ 
    (\CSF{k-1}{\g})_{\RB_{2}\RY_{2}} + (\CSF{k-1}{\g})_{\RB_{3}^{2}\RY_{2}} \\
& \phantom{=}
  + \sum_{ \spadesuit \in \{ \RA_1,\,\RA_3,\,\RB_{2},\,\RB_{3}^{2} \} }
    (\CSF{k-1}{\g})_{\spadesuit \emptyset}
\end{align*}
for $\g \in \{p-1,p\}$; see \eqref{eq:mat2a}. Also, we have
\begin{equation*}
(\CSF{k-1}{p-1})_{\emptyset}
= 
\sum_{\spadesuit \in \{ \RA_1,\,\RA_2,\,\RA_3,\,\RB_{1},\,\RB_{2},\,\RB_{3}^{1},\,\RB_{3}^{2} \} } 
(\CSF{k-1}{p-1})_{\spadesuit \emptyset}.
\end{equation*}
From these equalities together with \eqref{eq:Ind2x}, 
we deduce that 
\begin{align}
\FG_{w}^{\Q} \G{k}{p} & = 
(\CSF{k-1}{p})_{\RA_1 \RY_2} + (\CSF{k-1}{p})_{\RA_3 \RY_2}+ 
(\CSF{k-1}{p})_{\RB_{2}\RY_{2}} + (\CSF{k-1}{p})_{\RB_{3}^{2}\RY_{2}} \nonumber \\[2mm]
& \phantom{=}
  + \sum_{\spadesuit \in \{ \RA_1,\,\RA_3,\,\RB_{2},\,\RB_{3}^{2} \} } 
    (\CSF{k-1}{p})_{\spadesuit \emptyset} \nonumber \\[2mm]
& \phantom{=}
 - (\CSF{k-1}{p-1})_{\RA_1 \RY_2} - (\CSF{k-1}{p-1})_{\RA_3 \RY_2} 
 - (\CSF{k-1}{p-1})_{\RB_{2}\RY_{2}} - (\CSF{k-1}{p-1})_{\RB_{3}^{2}\RY_{2}} \nonumber \\[2mm]
& \phantom{=}
  + \sum_{\spadesuit \in \{ \RA_2,\,\RB_1,\,\RB_{3}^{1} \} }
    (\CSF{k-1}{p-1})_{\spadesuit \emptyset} \nonumber \\
& = (\CSF{k-1}{p})_{\RA_1\RY_2} + (\CSF{k-1}{p})_{\RE} + 
    (\CSF{k-1}{p})_{\RA_1 \emptyset} + (\CSF{k-1}{p})_{\RG} \nonumber \\
& \phantom{=}
  - (\CSF{k-1}{p-1})_{\RA_1\RY_2} - (\CSF{k-1}{p-1})_{\RE} + (\CSF{k-1}{p-1})_{\RF}, \label{eq:Ind4x}
\end{align}
where $(\CSF{k-1}{\g})_{\RE} = (\CSF{k-1}{\g})_{\RA_3 \RY_2}+
(\CSF{k-1}{\g})_{\RB_{2} \RY_2}+(\CSF{k-1}{\g})_{\RB_{3}^{2} \RY_2}$ for $g \in \{p-1,p\}$, 
and 
\begin{equation*}
  (\CSF{k-1}{p-1})_{\RF} = \sum_{\spadesuit \in \{ \RA_2,\,\RB_1,\,\RB_{3}^{1} \} }
  (\CSF{k-1}{p-1})_{\spadesuit \emptyset}, \qquad 
  (\CSF{k-1}{p})_{\RG} = \sum_{\spadesuit \in \{ \RA_3,\,\RB_{2},\,\RB_{3}^{2} \} }
  (\CSF{k-1}{p})_{\spadesuit \emptyset}; 
\end{equation*}
see \eqref{eq:Ind3} and \eqref{eq:Ind4}. 

\vspace{3mm}

\paragraph{\bf Step 3 (Sections~\ref{subsec:decomp3} and \ref{subsec:mat3}).}
In Section~\ref{subsec:decomp3}, we decompose the subset
$(\hspmw{k-1}{p-1})_{\RF}$ of $\hspmw{k-1}{p-1}$, 
which corresponds to the sum $(\CSF{k-1}{p-1})_{\RF}$ of 
certain terms in \eqref{eq:Ind4x}, 
and $\hspw{k}{p}$ into some subsets; 
see \eqref{eq:RF} and \eqref{eq:RRS}, respectively. 
We list the conditions appearing in these decompositions 
in \eqref{eq:cond3a}, \eqref{eq:cond3b}, and \eqref{eq:cond3c}. 
In particular, we have
\begin{equation} \label{eq:RFx}
(\CSF{k-1}{p-1})_{\RF} = 
(\CSF{k-1}{p-1})_{\RF_{1}} + 
(\CSF{k-1}{p-1})_{\RF_{2}^{1}} + 
(\CSF{k-1}{p-1})_{\RF_{2}^{2}}. 
\end{equation}

Here, recalling from Remark~\ref{rem:empty} 
the definition of $\bE{k}{p}{\bq}$ for $\bq=(\bp,M) \in \hspw{k}{p}$, 
we set 
\begin{equation*}
\CSE{k}{p}:=\sum_{\bq \in \hspw{k}{p}} \bE{k}{p}{\bq}, \qquad 
(\CSE{k}{p})_{\spadesuit}:=\sum_{\bq \in (\hspw{k}{p})_{\spadesuit}} \bE{k}{p}{\bq} 
\text{ for } \spadesuit \in \{\RR, \RS, \RS_1, \RS_2, \RS_1^1, \RS_1^2, \RS_1^{\ta}, \RS_1^{\tb} \}. 
\end{equation*}
Then we have 
%
%
\begin{equation} \label{eq:Ind5x}
\CSE{k}{p} = 
(\CSE{k}{p})_{\RR} + 
(\CSE{k}{p})_{\RS_1^1} + 
(\CSE{k}{p})_{\RS_1^{\ta}} + 
(\CSE{k}{p})_{\RS_1^{\tb}} + 
(\CSE{k}{p})_{\RS_2}. 
\end{equation}

In Section~\ref{subsec:mat3}, we state Proposition~\ref{prop:mat3a}, 
which follows from Proposition~\ref{prop:mat3} in Section~\ref{sec:prfmat3}. 
Now, we conclude that 
\begin{align*}
& \FG_{w}^{\Q}\G{k}{p} - \CSE{k}{p} \\[3mm]
& = (\CSF{k-1}{p})_{\RA_1\RY_2} + (\CSF{k-1}{p})_{\RE} + 
(\CSF{k-1}{p})_{\RA_1 \emptyset} + (\CSF{k-1}{p})_{\RG} \\
& - (\CSF{k-1}{p-1})_{\RA_1\RY_2} - (\CSF{k-1}{p-1})_{\RE} + (\CSF{k-1}{p-1})_{\RF} \\
& - (\CSE{k}{p})_{\RR} - (\CSE{k}{p})_{\RS_1^1} - (\CSE{k}{p})_{\RS_1^{\ta}} - 
  (\CSE{k}{p})_{\RS_1^{\tb}} - (\CSE{k}{p})_{\RS_2} \quad \text{by \eqref{eq:Ind4x} and \eqref{eq:Ind5x}} \\[3mm]
& = (\CSE{k}{p})_{\RS_2} + (\CSE{k}{p})_{\RS_1^{\tb}} - (\CSF{k-1}{p-1})_{\RF_{2}^{2}} + 
(\CSE{k}{p})_{\RR} - (\CSF{k-1}{p-1})_{\RF_{1}} \\
& + (\CSE{k}{p})_{\RS_1^1} + (\CSE{k}{p})_{\RS_1^{\ta}} - (\CSF{k-1}{p-1})_{\RF_{2}^{1}} + (\CSF{k-1}{p-1})_{\RF} \\
& - (\CSE{k}{p})_{\RR} - (\CSE{k}{p})_{\RS_1^1} - (\CSE{k}{p})_{\RS_1^{\ta}} - 
  (\CSE{k}{p})_{\RS_1^{\tb}} - (\CSE{k}{p})_{\RS_2} \quad \text{by Proposition~\ref{prop:mat3a}} \\[3mm]
& = 0 \quad \text{by \eqref{eq:RFx}}. 
\end{align*}
This completes the proof of Theorem~\ref{thm:pieri}.

%
\subsection{Decomposition into subsets (1).}
\label{subsec:decomp1}

Let $\g \in \{p-1,\,p\}$. We set
\begin{align*}
& (\hspw{k-1}{\g})_{\CRA}:=
\bigl\{ (\bp,M) \in \hspw{k-1}{\g} \mid n_{(k-1,k)}(\bp)=0 \bigr\}, \\
& (\hspw{k-1}{\g})_{\CRB}:=
\bigl\{ (\bp,M) \in \hspw{k-1}{\g} \mid n_{(k-1,k)}(\bp)=1 \bigr\}, \\
& (\hspw{k-1}{\g})_{\CRBa}:=
\bigl\{ (\bp,M) \in \hspw{k-1}{\g} \mid n_{(k-1,k)}(\bp)=1,\,(k-1,k) \not\in M \bigr\}, \\
& (\hspw{k-1}{\g})_{\CRBb}:=
\bigl\{ (\bp,M) \in \hspw{k-1}{\g} \mid (k-1,k) \in M,\,\kappa(\bp)=(k-1,k) \bigr\}, \\
& (\hspw{k-1}{\g})_{\CRBc}:=
\bigl\{ (\bp,M) \in \hspw{k-1}{\g} \mid (k-1,k) \in M,\,\kappa(\bp) \ne (k-1,k) \bigr\}; 
\end{align*}
recall from Section~\ref{subsec:not} that $n_{(k-1,k)}(\bp)$ denotes the number of 
edges in $\bp$ whose label is $(k-1,k)$, and $\kappa(\bp)$ denotes the final label of $\bp$.
Also, notice that if $(k-1,k) \in M$, then $n_{(k-1,k)}(\bp)=1$. 
We have 
\begin{align} 
\hspw{k-1}{\g} & = (\hspw{k-1}{\g})_{ \CRA } \sqcup (\hspw{k-1}{\g})_{\CRB} \nonumber \\[3mm]
& = (\hspw{k-1}{\g})_{ n_{(k-1,k)}=0 } \sqcup (\hspw{k-1}{\g})_{\CRBa} \nonumber \\ 
& \phantom{=} \sqcup (\hspw{k-1}{\g})_{\CRBb} \sqcup (\hspw{k-1}{\g})_{ \CRBc }. \label{eq:PAB}
\end{align}

\begin{ex} \label{ex:AB}
We use the notation and setting of Example~\ref{ex2}. It is easily verified that
\begin{align*}
& \# (\hspw{3}{2})_{ n_{(3,4)}=0 } = 5, \quad 
\text{e.g., }
( (w \,;\, (3,6)_{\SB}, (1,5)_{\SB}), \{(3,6),(1,5)\} ) \in (\hspw{3}{2})_{n_{(3,4)}=0}; \\
& \# (\hspw{3}{2})_{ n_{(3,4)}=1 } =  18; \\
& \# (\hspw{3}{2})_{ \begin{subarray}{c} n_{(3,4)}=1 \\ (3,4) \notin M \end{subarray} } = 5, \quad 
\text{e.g., } 
( (w \,;\, (1,5)_{\SB}, (2,5)_{\SB}, (3,4)_{\SQ}), \{(1,5),(2,5)\} ) \in 
(\hspw{3}{2})_{ \begin{subarray}{c} n_{(3,4)}=1 \\ (3,4) \notin M \end{subarray} }; \\
& \# (\hspw{3}{2})_{ \begin{subarray}{c} (3,4) \in M \\ \kappa=(3,4) \end{subarray} } = 3, \quad 
\text{e.g., } 
( (w \,;\, (1,5)_{\SB}, (2,5)_{\SB}, (3,4)_{\SQ}), \{(1,5),(3,4)\}) \in 
(\hspw{3}{2})_{ \begin{subarray}{c} (3,4) \in M \\ \kappa=(3,4) \end{subarray} }; \\
& \# (\hspw{3}{2})_{ \begin{subarray}{c} (3,4) \in M \\ \kappa \ne (3,4) \end{subarray} } = 10, \quad
\text{e.g., } 
( (w \,;\, (3,4)_{\SQ}, (1,4)_{\SB}), \{(3,4),(1,4)\} ) \in 
(\hspw{3}{2})_{ \begin{subarray}{c} (3,4) \in M \\ \kappa \ne (3,4) \end{subarray} }. 
\end{align*}
\end{ex}

\begin{rem} \label{rem:B123} \mbox{}
\begin{enu}
\item[(1)] Note that $\max \SL_{k-1}=(k-1,k)$ in the total order $\preceq$. 
Also, we deduce by Definition~\ref{dfn:mark}\,(2) that 
if $(\bp,M) \in (\hspw{k-1}{\g})_{\CRBa}$, then 
$\kappa(\bp)=(k-1,k)$. 

\item[(2)] By Definition~\ref{dfn:mark}\,(1), we see that 
if $(\bp,M) \in (\hspw{k-1}{\g})_{\CRBb}$, then $n_{(k-1,*)}(\bp)=1$; 
recall from Section~\ref{subsec:not} that $n_{(k-1,*)}(\bp)$ denotes the number of 
edges in $\bp$ whose label is of the form $(k-1,b)$ for some $b$.

\item[(3)] If $(\bp,M) \in (\hspw{k-1}{\g})_{ \CRBc }$, then $n_{(k-1,*)}(\bp) = 1$. 
Indeed, suppose, for a contradiction, that $n_{(k-1,*)}(\bp) \ge 2$. 
Since $\kappa(\bp) \ne (k-1,k)$ and $\max \SL_{k-1}=(k-1,k)$, 
we see by (P2) that a label of the form $(k-1,b)$ appears 
after $(k-1,k)$ in $\bp$; notice that $b > k$ by (P0).  
Therefore, it follows from (P1) that $k \ge b$, which is a contradiction. 
\end{enu}
\end{rem}

Let $\spadesuit$ be one of the following conditions on $(\bp,M) \in \hspw{k-1}{\g}$: 
\begin{equation} \label{eq:cAB}
\begin{cases}
(\RA) & n_{(k-1,k)}(\bp)=0, \\
(\RB) & n_{(k-1,k)}(\bp)=1, \\
(\RB_1) & \text{$n_{(k-1,k)}(\bp)=1$ and $(k-1,k) \not\in M$}, \\
(\RB_2) & \text{$(k-1,k) \in M$ and $\kappa(\bp)=(k-1,k)$}, \\
(\RB_3) & \text{$(k-1,k) \in M$ and $\kappa(\bp) \ne (k-1,k)$}.
\end{cases}
\end{equation}
We set 
\begin{align*}
& (\hspmw{k-1}{\g})_{\begin{subarray}{l}\spadesuit \\ \CRX \end{subarray}} := 
\bigl\{ ((\bp,M) \mid \bm) \in \hspmw{k-1}{\g} 
\mid (\bp,M) \in (\hspw{k-1}{\g})_{\spadesuit},\,
\iota(\bm)=(k-1,k) \bigr\}, \\
& (\hspmw{k-1}{\g})_{\begin{subarray}{l} \spadesuit \\ \CRY \end{subarray}} := 
\bigl\{ ((\bp,M) \mid \bm) \in \hspmw{k-1}{\g} 
\mid (\bp,M) \in (\hspw{k-1}{\g})_{\spadesuit},\,
\iota(\bm) \ne (k-1,k) \bigr\};
\end{align*}
recall from Section~\ref{subsec:not} that 
$\iota(\bm)$ denotes the initial label of $\bm$. 
Here we also consider the following conditions on $((\bp,M) \mid \bm) \in \hspmw{k-1}{\g}$:
\begin{equation} \label{eq:cXY}
\begin{cases}
(\RX) & \iota(\bm) = (k-1,k), \\
(\RY) & \iota(\bm) \ne (k-1,k).
\end{cases}
\end{equation}
We have 
\begin{equation} \label{eq:AB}
\hspmw{k-1}{\g} = 
\bigsqcup_{
\begin{subarray}{c}
\spadesuit \in \{\RA,\,\RB \} \\[1mm]
\clubsuit \in \{\RX,\,\RY \}
\end{subarray} } 
(\hspmw{k-1}{\g})_{\spadesuit\clubsuit} =
\bigsqcup_{
\begin{subarray}{c}
\spadesuit \in \{\RA,\,\RB_1,\,\RB_2,\,\RB_3 \} \\[1mm]
\clubsuit \in \{\RX,\,\RY \}
\end{subarray} } 
(\hspmw{k-1}{\g})_{\spadesuit\clubsuit}. 
\end{equation}

Next, for $\g \in \{p-1,\,p\}$, we set
\begin{align*}
& (\sfpw{k-2}{\g-1})_{ \CRC }:=
\bigl\{ \bp \in \sfpw{k-2}{\g-1} \mid n_{(*,k-1)}(\bp) = 0 \bigr\}, \\
& (\sfpw{k-2}{\g-1})_{ \CRD }:=
\bigl\{ \bp \in \sfpw{k-2}{\g-1} \mid n_{(*,k-1)}(\bp) \ge 1 \bigr\}; 
\end{align*}
recall from Section~\ref{subsec:not} that $n_{(*,k-1)}(\bp)$ denotes the number of 
edges in $\bp$ whose label is of the form $(a,k-1)$ for some $a$.
We consider the following conditions $\RC$ and $\RD$: 
\begin{equation} \label{eq:cCD}
\begin{cases}
(\RC) & n_{(*,k-1)}(\bp) = 0,  \\
(\RD) & n_{(*,k-1)}(\bp) \ge 1. 
\end{cases}
\end{equation}

\begin{ex} \label{ex:CD}
We use the notation and setting of Example~\ref{ex2}. It is easily verified that
\begin{align*}
& \# (\sfpw{3}{2})_{ n(*,4)=0 } = 4, \quad 
\text{e.g., } (w \,;\, (3,6)_{\SB}, (1,5)_{\SB}, (2,5)_{\SB}) \in (\sfpw{3}{2})_{ n(*,4)=0 }; \\
& \# (\sfpw{3}{2})_{ n(*,4) \ge 1 } = 16, \quad 
\text{e.g., } (w \,;\, (1,5)_{\SB}, (3,4)_{\SQ}, (1,4)_{\SB}) \in (\sfpw{3}{2})_{ n(*,4) \ge 1 }. 
\end{align*}
\end{ex}

%
\begin{dfn} \label{dfn:D**}
Let $\bp \in (\sfpw{k-2}{\g-1})_{\RD}=(\sfpw{k-2}{\g-1})_{\CRD}$, and write it as: 
\begin{equation} \label{eq:dec12-1}
\bp = (w\,;\,\dots\dots,
\overbrace{(i_{1},k),\dots,(i_{s},k)}^{ =\,\bp_{(*,k)} },
\overbrace{(j_{1},k-1),\dots,(j_{t},k-1)}^{ =\,\bp_{(*,k-1)} }), 
\end{equation}
where $s \ge 0$, $t \ge 1$, and $1 \le i_{1},\dots,i_{s},j_{1},\dots,j_{t} \le k-2$. 
Consider the following directed path obtained by adding an edge labeled 
by $(k-1,k)$ to the end of $\bp$: 
\begin{equation} \label{eq:Alg0}
(w\,;\,\dots\dots,
  (i_{1},k),\dots,(i_{s},k), 
  \overbrace{(j_{1},k-1),\dots,(j_{t},k-1)}^{ =\,\bp_{(*,k-1)} },(k-1,k)). 
\end{equation}
Run {\bf Algorithm $(\bp_{(*,k-1)}:(k-1,k))$} (see Section~\ref{subsec:not}) 
for this directed path, and then consider the following conditions: 
\begin{enumerate}
%
\item[($\RD_1$)]
%
{\bf Algorithm $(\bp_{(*,k-1)}:(k-1,k))$} ends with a directed path of the form: 
\begin{equation} \label{eq:dec12-2}
( w\,;\,\dots\dots,
  (i_{1},k),\dots,(i_{s},k),(k-1,k),
  (j_{1},k),(j_{2},k),\dots,(j_{t},k) ), 
\end{equation}
%
\item[($\RD_{11}$)]
%
{\bf Algorithm $(\bp_{(*,k-1)}:(k-1,k))$} ends with a directed path of 
the form \eqref{eq:dec12-2} above, with 
$\{i_{1},\dots,i_{s}\} \cap \{j_{1},\dots,j_{t}\} = \emptyset$, 
%
\item[($\RD_{12}$)]
%
{\bf Algorithm $(\bp_{(*,k-1)}:(k-1,k))$} ends with a directed path of 
the form \eqref{eq:dec12-2} above, with 
$\{i_{1},\dots,i_{s}\} \cap \{j_{1},\dots,j_{t}\} \ne \emptyset$, 
%
\item[($\RD_2$)]
%
{\bf Algorithm $(\bp_{(*,k-1)}:(k-1,k))$} ends with a directed path of the form:
\begin{equation} \label{eq:dec12-3}
\begin{split}
(w\,;\, & \dots\dots,(i_{1},k),\dots,(i_{s},k),(j_{1},k-1),\dots,(j_{t(\bp)-1},k-1), \\
& (j_{t(\bp)},k),(j_{t(\bp)},k-1),(j_{t(\bp)+1},k),\dots,(j_{t},k))
\end{split}
\end{equation}
for some $1 \le t(\bp) \le t$. 
\end{enumerate}

\begin{ex} \label{ex:D**}
We use the notation and setting of Example~\ref{ex2};
recall from Example~\ref{ex:CD} that $\# (\sfpw{3}{2})_{ n(*,4) \ge 1 } = 16$. 
In the following list, we omit $w = 32514$ from directed paths, 
and write only label sequences. 
\begin{equation*}
\begin{array}{c|c|c}
\bp \in (\sfpw{3}{2})_{ n(*,4) \ge 1 } & \text{Result of $(\bp_{(*,4)}:(4,5))$ for $\bp$} & \\ \hline\hline
(3,6)_{\SB}, (1,5)_{\SB}, (2,5)_{\SB}, (3,4)_{\SQ} & 
(3,6)_{\SB}, (1,5)_{\SB}, (2,5)_{\SB}, (4,5)_{\SB}, (3,5)_{\SQ} & \RD_{11} \\ \hline
(3,6)_{\SB}, (1,5)_{\SB}, (3,4)_{\SQ} & 
(3,6)_{\SB}, (1,5)_{\SB}, (4,5)_{\SB}, (3,5)_{\SQ} & \RD_{11} \\ \hline
(3,6)_{\SB}, (2,5)_{\SB}, (3,4)_{\SQ} & 
(3,6)_{\SB}, (2,5)_{\SB}, (4,5)_{\SB}, (3,5)_{\SQ} & \RD_{11} \\ \hline
(1,5)_{\SB}, (2,5)_{\SB}, (3,4)_{\SQ} & 
(1,5)_{\SB}, (2,5)_{\SB}, (4,5)_{\SB}, (3,5)_{\SQ} & \RD_{11} \\ \hline
(1,5)_{\SB}, (2,5)_{\SB}, (3,4)_{\SQ}, (1,4)_{\SB} & 
(1,5)_{\SB}, (2,5)_{\SB}, (4,5)_{\SB}, (3,5)_{\SQ}, (1,5)_{\SB} & \RD_{12} \\ \hline
(1,5)_{\SB}, (2,5)_{\SB}, (3,4)_{\SQ}, (1,4)_{\SB},(2,4)_{\SB} & 
(1,5)_{\SB}, (2,5)_{\SB}, (4,5)_{\SB}, (3,5)_{\SQ}, (1,5)_{\SB}, (2,5)_{\SB} & \RD_{12} \\ \hline
(1,5)_{\SB}, (2,5)_{\SB}, (3,4)_{\SQ}, (2,4)_{\SB} & 
(1,5)_{\SB}, (2,5)_{\SB}, (4,5)_{\SB}, (3,5)_{\SQ}, (2,5)_{\SB} & \RD_{12} \\ \hline
(1,5)_{\SB}, (3,4)_{\SQ} & 
(1,5)_{\SB}, (4,5)_{\SB}, (3,5)_{\SQ} & \RD_{11} \\ \hline
(1,5)_{\SB}, (3,4)_{\SQ}, (1,4)_{\SB} & 
(1,5)_{\SB}, (4,5)_{\SB}, (3,5)_{\SQ}, (1,5)_{\SB} & \RD_{12} \\ \hline
(1,5)_{\SB}, (3,4)_{\SQ}, (1,4)_{\SB}, (2,4)_{\SB} & 
(1,5)_{\SB}, (3,4)_{\SQ}, (1,4)_{\SB}, (2,5)_{\SB}, (2,4)_{\SB} & \RD_{2} \\ \hline
(1,5)_{\SB}, (3,4)_{\SQ}, (2,4)_{\SB} & 
(1,5)_{\SB}, (3,4)_{\SQ}, (2,5)_{\SB}, (2,4)_{\SB} & \RD_{2} \\ \hline
(2,5)_{\SB}, (3,4)_{\SQ} & (2,5)_{\SB}, (4,5)_{\SB}, (3,5)_{\SQ} & \RD_{11} \\ \hline
(2,5)_{\SB}, (3,4)_{\SQ}, (2,4)_{\SB} & 
(2,5)_{\SB}, (4,5)_{\SB}, (3,5)_{\SQ}, (2,5)_{\SB}  & \RD_{12} \\ \hline 
(3,4)_{\SQ}, (1,4)_{\SB} & (3,4)_{\SQ}, (1,5)_{\SB}, (1,4)_{\SB} & \RD_{2} \\ \hline
(3,4)_{\SQ}, (1,4)_{\SB}, (2,4)_{\SB} & 
(3,4)_{\SQ}, (1,5)_{\SB}, (1,4)_{\SB}, (2,5)_{\SB} & \RD_{2} \\ \hline
(3,4)_{\SQ}, (2,4)_{\SB} & 
(3,4)_{\SQ}, (2,5)_{\SB}, (2,4)_{\SB} & \RD_{2} 
\end{array}
\end{equation*}
\end{ex}

For each $\spadesuit \in \{ \RC,\,\RD_{1},\,\RD_{11},\,\RD_{12},\,\RD_{2} \}$, 
we set
\begin{equation*}
(\sfpw{k-2}{\g-1})_{\spadesuit}:=
\bigl\{ \bp \in \sfpw{k-2}{\g-1} \mid \text{$\bp$ satisfies condition ($\spadesuit$)} \bigr\}.
\end{equation*}
\end{dfn}

It is easily seen that 
\begin{equation*}
(\sfpw{k-2}{\g-1})_{\RD} = (\sfpw{k-2}{\g-1})_{\RD_{1}} \sqcup (\sfpw{k-2}{\g-1})_{\RD_{2}} = 
(\sfpw{k-2}{\g-1})_{\RD_{11}} \sqcup (\sfpw{k-2}{\g-1})_{\RD_{12}} \sqcup (\sfpw{k-2}{\g-1})_{\RD_{2}}.
\end{equation*}
For each $\spadesuit \in \{\RC,\,\RD,\,\RD_{1},\RD_{2},\RD_{11},\RD_{12} \}$, we set
\begin{align*}
& (\hspw{k-2}{\g-1})_{\spadesuit}:=
\bigl\{ (\bp,M) \in \hspw{k-2}{\g-1} \mid \bp \in (\sfpw{k-2}{\g-1})_{\spadesuit} \bigr\}, \\
& (\hspmw{k-2}{\g-1})_{\spadesuit \RX} := 
\bigl\{ ((\bp,M) \mid \bm) \in \hspmw{k-2}{\g-1} 
\mid (\bp,M) \in (\hspw{k-2}{\g-1})_{\spadesuit},\,
\iota(\bm) = (k-1,k) \bigr\}, \\[1.5mm]
& (\hspmw{k-2}{\g-1})_{\spadesuit \RY} := 
\bigl\{ ((\bp,M) \mid \bm) \in \hspmw{k-2}{\g-1} 
\mid (\bp,M) \in (\hspw{k-2}{\g-1})_{\spadesuit},\,
\iota(\bm) \ne (k-1,k) \bigr\}, 
\end{align*}
where conditions ($\RX$) and ($\RY$) 
are as given in \eqref{eq:cXY}. We have 
\begin{align} 
\hspm{k-2}{\g-1} & = \bigsqcup_{
\begin{subarray}{c}
\spadesuit \in \{\RC,\,\RD \} \\[1mm]
\clubsuit \in \{\RX,\,\RY \}
\end{subarray} } 
(\hspmw{k-2}{\g-1})_{\spadesuit\clubsuit}
= \bigsqcup_{
\begin{subarray}{c}
\spadesuit \in \{\RC,\,\RD_{1},\,\RD_{2} \} \\[1mm]
\clubsuit \in \{\RX,\,\RY \}
\end{subarray} } 
(\hspmw{k-2}{\g-1})_{\spadesuit\clubsuit} \nonumber \\[3mm]
& = 
\bigsqcup_{
\begin{subarray}{c}
\spadesuit \in \{\RC,\,\RD_{11},\,\RD_{12},\,\RD_{2} \} \\[1mm]
\clubsuit \in \{\RX,\,\RY \}
\end{subarray} } 
(\hspmw{k-2}{\g-1})_{\spadesuit\clubsuit}. \label{eq:CD}
\end{align}
%
%
\subsection{Matching (1).}
\label{subsec:mat1}
Here we list the conditions introduced in Section~\ref{subsec:decomp1}, 
which are needed in this subsection and in 
Section~\ref{sec:prfmat1}: 
\begin{itemize}
\item For $((\bp,M) \mid \bm) \in \hspmw{k-1}{\g}$ with $g \in \{p-1,p\}$, 
\begin{equation} \label{eq:cond1a}
\begin{cases}
(\RA) & n_{(k-1,k)}(\bp)=0, \\
(\RB) & n_{(k-1,k)}(\bp)=1, \\
(\RB_1) & \text{$n_{(k-1,k)}(\bp)=1$ and $(k-1,k) \not\in M$}, \\
(\RB_2) & \text{$(k-1,k) \in M$ and $\kappa(\bp)=(k-1,k)$}, \\
(\RB_3) & \text{$(k-1,k) \in M$ and $\kappa(\bp) \ne (k-1,k)$}, \\
(\RX) & \iota(\bm) = (k-1,k), \\
(\RY) & \iota(\bm) \ne (k-1,k).
\end{cases}
\end{equation}
\item For $((\bp,M) \mid \bm) \in \hspmw{k-2}{\g-1}$ with $g \in \{p-1,p\}$, 
\begin{equation} \label{eq:cond1b}
\begin{cases}
(\RC) & n_{(*,k-1)}(\bp) = 0, \\
(\RD) & n_{(*,k-1)}(\bp) \ge 1, \\
(\RD_1) & \text{see Definition~\ref{dfn:D**}; {\bf Algorithm $(\bp_{(*,k-1)}:(k-1,k))$} ends with} \\
& \text{a directed path of the form \eqref{eq:dec12-2}}, \\
(\RD_{11}) & \text{see Definition~\ref{dfn:D**}; ($\RD_1$) holds, and  
$\{i_{1},\dots,i_{s}\} \cap \{j_{1},\dots,j_{t}\} = \emptyset$}, \\
(\RD_{12}) & \text{see Definition~\ref{dfn:D**}; ($\RD_1$) holds, and  
$\{i_{1},\dots,i_{s}\} \cap \{j_{1},\dots,j_{t}\} \ne \emptyset$}, \\
(\RD_2) & \text{see Definition~\ref{dfn:D**}; {\bf Algorithm $(\bp_{(*,k-1)}:(k-1,k))$} ends with} \\
& \text{a directed path of the form \eqref{eq:dec12-3}}, \\
(\RX) & \iota(\bm) = (k-1,k), \\
(\RY) & \iota(\bm) \ne (k-1,k).
\end{cases}
\end{equation}
\end{itemize}
In the proposition below, 
$(\CSF{\h}{\g})_{\spadesuit \clubsuit}$
denotes the sum of $\bF{\h}{\g}{\bq}$ 
over $\bq \in (\hspmw{\h}{\g})_{\spadesuit \clubsuit}$; 
see \eqref{eq:bF} and \eqref{eq:CSF}. 
%
%
\begin{prop}[to be proved in Section~\ref{sec:prfmat1}] \label{prop:mat1a}
The following equalities hold in $\K_{\infty}'$\,{\rm:} 
\begin{align*}
& (\CSF{k-1}{\g})_{\RB_1\RY} = - (\CSF{k-1}{\g})_{\RA\RX}, & 
& (\CSF{k-1}{\g})_{\RB_1\RX} = - \Q_{k-1} (\CSF{k-1}{\g})_{\RA\RY}, \\
& (\CSF{k-2}{\g-1})_{\RC\RY} = \Q_{k-1}^{-1} (\CSF{k-1}{\g})_{\RB_2\RX}, & 
& (\CSF{k-2}{\g-1})_{\RC\RX} = (\CSF{k-1}{\g})_{\RB_2\RY}, \\ 
& (\CSF{k-2}{\g-1})_{\RD_{11}\RY} = \Q_{k-1}^{-1} (\CSF{k-1}{\g})_{\RB_3 \RX}, & 
& (\CSF{k-2}{\g-1})_{\RD_{11}\RX} = (\CSF{k-1}{\g})_{\RB_3 \RY}, \\
& (\CSF{k-2}{\g-1})_{\RD_2\RY} = - \Q_{k-1}^{-1} (\CSF{k-2}{\g-1})_{\RD_{12}\RX}, & 
& (\CSF{k-2}{\g-1})_{\RD_2\RX} = - (\CSF{k-2}{\g-1})_{\RD_{12}\RY}. 
\end{align*}
\end{prop}

From \eqref{eq:AB} and \eqref{eq:CD}, we deduce that in $\K_{\infty}'$, 
\begin{align}
& (1-\Q_{k-1})^{-1} 
  ( \CSF{k-1}{\g} - \Q_{k-1} \CSF{k-2}{\g-1} ) 
  = (1-\Q_{k-1})^{-1} \times \nonumber \\[1.5mm]
& \left(
\sum_{
\begin{subarray}{c}
\spadesuit \in \{ \RA,\,\RB_1,\,\RB_2,\,\RB_3 \} \\[1mm]
\clubsuit \in \{\RX,\,\RY \}
\end{subarray} } 
(\CSF{k-1}{\g})_{\spadesuit\clubsuit} - Q_{k-1}
\sum_{
\begin{subarray}{c}
\spadesuit \in \{ \RC,\,\RD_{11},\,\RD_{12},\,\RD_2 \} \\[1mm]
\clubsuit \in \{\RX,\,\RY \}
\end{subarray} } 
(\CSF{k-2}{\g-1})_{\spadesuit\clubsuit}
\right),  \label{eq:mat1a}
\end{align}
where 
\begin{align*}
& (\CSF{k-1}{\g})_{\spadesuit\clubsuit} = 
  (\CSF{k-1}{\g})_{(\hspmw{k-1}{\g})_{\spadesuit\clubsuit}}, \qquad
& (\CSF{k-2}{\g-1})_{\spadesuit\clubsuit} = 
  (\CSF{k-2}{\g-1})_{(\hspmw{k-2}{\g-1})_{\spadesuit\clubsuit}}. 
\end{align*}
Substituting the equalities in Proposition~\ref{prop:mat1a} 
into the right-hand side of \eqref{eq:mat1a}, we obtain 
\begin{align*}
& (1-\Q_{k-1})^{-1} 
  ( \CSF{k-1}{\g} - \Q_{k-1} \CSF{k-2}{\g-1} ) \\
& = 
(\CSF{k-1}{\g})_{\RA\RY} + (\CSF{k-1}{\g})_{\RB_2\RY} + 
(\CSF{k-1}{\g})_{\RB_3\RY} - Q_{k-1} (\CSF{k-2}{\g-1})_{\RD_2\RY}. 
\end{align*}
Combining this equality with \eqref{eq:Ind1}, we conclude that in $\K_{\infty}$,
\begin{equation} \label{eq:Ind2}
\begin{split}
& \FG^{\Q}_{w} \G{k}{p} = (\CSF{k-1}{p-1})_{\emptyset} \\
& + \bigl(
   (\CSF{k-1}{p})_{\RA\RY} + (\CSF{k-1}{p})_{\RB_2\RY} + 
   (\CSF{k-1}{p})_{\RB_3\RY} - Q_{k-1} (\CSF{k-2}{p-1})_{\RD_2\RY} \bigr) \\
& - \bigl(
   (\CSF{k-1}{p-1})_{\RA\RY} + (\CSF{k-1}{p-1})_{\RB_2\RY} + 
   (\CSF{k-1}{p-1})_{\RB_3\RY} - Q_{k-1} (\CSF{k-2}{p-2})_{\RD_2\RY} \bigr).
\end{split}
\end{equation}

%
\subsection{Decomposition into subsets (2).}
\label{subsec:decomp2}

Let $g \in \{p-1,p\}$. Recall condition $\RA$, that is, 
$n_{(k-1,k)}(\bp) = 0$ for $\bp \in \hspw{k-1}{\g}$, 
where $n_{(k-1,k)}(\bp)$ denotes the number of edges whose label is $(k-1,k)$. 
We decompose 
$(\hspw{k-1}{\g})_{\RA} = (\hspw{k-1}{\g})_{\CRA}$ into 
the following three subsets: 
\begin{align*}
(\hspw{k-1}{\g})_{ \begin{subarray}{c} \CRA \\ \CRAa \end{subarray} } & :=
 \bigl\{ (\bp,M) \in (\hspw{k-1}{\g})_{n_{(k-1,k)}(\bp)=0} \mid 
 \bp_{(*,k)} = \emptyset \bigr\}, \\
(\hspw{k-1}{\g})_{ 
 \begin{subarray}{c} \CRA \\ \CRAb \end{subarray} } & :=
 \bigl\{ (\bp,M) \in (\hspw{k-1}{\g})_{n_{(k-1,k)}(\bp)=0} \mid 
 \bp_{(*,k)} \ne \emptyset \text{ and } \kappa(\bp) \not\in M \bigr\}, \\
(\hspw{k-1}{\g})_{ 
 \begin{subarray}{c} \CRA \\ \CRAc \end{subarray} } & :=
 \bigl\{ (\bp,M) \in (\hspw{k-1}{\g})_{n_{(k-1,k)}(\bp)=0} \mid 
 \bp_{(*,k)} \ne \emptyset \text{ and } \kappa(\bp) \in M \bigr\}; 
\end{align*}
recall from Section~\ref{subsec:not} that 
$\kappa(\bp)$ denotes the final label of $\bp$, 
and from Section~\ref{subsec:pieri} that 
$\bp_{(*,k)}$ denotes the $(*,k)$-segment of $\bp$. 
Note that 
\begin{equation} \label{eq:A123}
(\hspw{k-1}{\g})_{ \CRA } = 
(\hspw{k-1}{\g})_{\begin{subarray}{c} \CRA \\ \CRAa \end{subarray}} 
\sqcup 
(\hspw{k-1}{\g})_{\begin{subarray}{c} \CRA \\ \CRAb \end{subarray}}
\sqcup (\hspw{k-1}{\g})_{\begin{subarray}{c} \CRA \\ \CRAc \end{subarray}}; 
\end{equation}
here we consider the following conditions: 
\begin{equation} \label{eq:cAn}
\begin{cases}
(\RA_1) & n_{(k-1,k)}(\bp)=0 \text{ and } \bp_{(*,k)} = \emptyset, \\
(\RA_2) & n_{(k-1,k)}(\bp)=0, \, \bp_{(*,k)} \ne \emptyset, \text{ and } \kappa(\bp) \not\in M, \\
(\RA_3) & n_{(k-1,k)}(\bp)=0, \, \bp_{(*,k)} \ne \emptyset, \text{ and } \kappa(\bp) \in M.
\end{cases}
\end{equation}
Then we can rewrite \eqref{eq:A123} as: 
\begin{equation} \label{eq:decA}
(\hspw{k-1}{\g})_{\RA} = (\hspw{k-1}{\g})_{\RA_1} \sqcup 
(\hspw{k-1}{\g})_{\RA_2} \sqcup (\hspw{k-1}{\g})_{\RA_3}. 
\end{equation}

\begin{ex} \label{ex:A**}
(1) We use the notation and setting of Example~\ref{ex2};
recall from Example~\ref{ex:AB} that $\# (\hspw{3}{2})_{ n_{(3,4)}=0 } = 5$. 
It is easily verified that
\begin{equation*}
(\hspw{3}{2})_{\begin{subarray}{c} n_{(3,4)}=0 \\ \bp_{(*,4)} = \emptyset \end{subarray}} 
= (\hspw{3}{2})_{ n(3,4)=0 }, 
\qquad 
(\hspw{3}{2})_{\begin{subarray}{c} n_{(3,4)}=0 \\ \bp_{(*,4)} \ne \emptyset,\,
\kappa \notin M \end{subarray}} = 
(\hspw{3}{2})_{\begin{subarray}{c} n_{(3,4)}=0 \\ \bp_{(*,4)} = \emptyset,\,
\kappa \in M \end{subarray}} = \emptyset. 
\end{equation*}

(2) We use the notation and setting of Example~\ref{ex1}. 
It is easily verified that
\begin{align*}
& (\hspw{2}{2})_{\begin{subarray}{c} n_{(2,3)}=0 \\ \bp_{(*,3)} = \emptyset \end{subarray}}
=\bigl\{ ( (w \,;\, (1,4)_{\SB}, (2,4)_{\SB}), \{(1,4),(2,4)\} ) \bigr\}, \\
& (\hspw{2}{2})_{\begin{subarray}{c} n_{(2,3)}=0 \\ \bp_{(*,3)} \ne \emptyset,\,
\kappa \notin M \end{subarray}} 
= \bigl\{ ( (w \,;\, (1,4)_{\SB}, (2,4)_{\SB}, (1,3)_{\SQ}), \{(1,4),(2,4)\}) \bigr\}, \\
& (\hspw{2}{2})_{\begin{subarray}{c} n_{(2,3)}=0 \\ \bp_{(*,3)} \ne \emptyset,\,
\kappa \in M \end{subarray}} = \emptyset. 
\end{align*}

\end{ex}

Now, recall condition $\RY$, that is, $\iota(\bm) \ne (k-1,k)$ 
for $((\bp,M) \mid \bm) \in \hspmw{k-1}{\g}$, 
where $\iota(\bm)$ denotes the initial label of $\bm$. 
In addition, consider the following conditions: 
\begin{equation} \label{eq:cYn}
\begin{cases}
(\RY_1) & \iota(\bm) \ne (k-1,k) \text{ and } \bm_{(*,k)} = \emptyset, \\
(\RY_2) & \iota(\bm) \ne (k-1,k),\, \bm_{(*,k)} = \emptyset, \text{ and } \bm_{(k,*)} \ne \emptyset, \\
(\RY_3) & \iota(\bm) \ne (k-1,k) \text{ and } \bm_{(*,k)} \ne \emptyset; 
\end{cases}
\end{equation}
recall from Definition~\ref{dfn:monk} that $\bm_{(*,k)}$ and 
$\bm_{(k,*)}$ denote the $(*,k)$-segment and the $(k,*)$-segment, respectively. 
For $\spadesuit \in \{ \RA_1, \RA_2, \RA_3 \}$, we set
\begin{align*}
& (\hspmw{k-1}{\g})_{\spadesuit \RY}:=
\bigl\{ ((\bp,M) \mid \bm) \in (\hspmw{k-1}{\g})_{\RA \RY} \mid 
(\bp,M) \in (\hspw{k-1}{\g})_{\spadesuit} \bigr\}, \\
& (\hspmw{k-1}{\g})_{\spadesuit \emptyset}:=
\bigl\{ ((\bp,M) \mid \bm)  \in (\hspmw{k-1}{\g})_{\spadesuit \RY} \mid 
\bm = \emptyset \bigr\}, \\
& (\hspmw{k-1}{\g})_{\spadesuit \RY_1}:=
\bigl\{ ((\bp,M) \mid \bm) \in (\hspmw{k-1}{\g})_{\spadesuit \RY} \mid 
\bm_{(*,k)} = \emptyset \bigr\}, \\
& (\hspmw{k-1}{\g})_{\spadesuit \RY_2}:= 
\bigl\{ ((\bp,M) \mid \bm) \in (\hspmw{k-1}{\g})_{\spadesuit \RY} \mid 
\bm_{(*,k)} = \emptyset,\,
\bm_{(k,*)} \ne \emptyset \bigr\}, \\
& (\hspmw{k-1}{\g})_{\spadesuit \RY_3}:=
\bigl\{ ((\bp,M) \mid \bm) \in (\hspmw{k-1}{\g})_{\spadesuit \RY} \mid 
\bm_{(*,k)} \ne \emptyset \bigr\}.
\end{align*}
For each $\spadesuit \in \{ \RA_1, \RA_2, \RA_3 \}$, we have 
\begin{equation*}
\begin{split}
(\hspmw{k-1}{\g})_{\spadesuit \RY_1} & = 
(\hspmw{k-1}{\g})_{\spadesuit \emptyset} \sqcup 
(\hspmw{k-1}{\g})_{\spadesuit \RY_2}, \\
(\hspmw{k-1}{\g})_{\spadesuit \RY} & = 
(\hspmw{k-1}{\g})_{\spadesuit \RY_1} \sqcup 
(\hspmw{k-1}{\g})_{\spadesuit \RY_3} \\
& = (\hspmw{k-1}{\g})_{\spadesuit \emptyset} \sqcup 
(\hspmw{k-1}{\g})_{\spadesuit \RY_2} \sqcup 
(\hspmw{k-1}{\g})_{\spadesuit \RY_3}, 
\end{split}
\end{equation*}
and 
\begin{align} 
(\hspmw{k-1}{\g})_{\RA\RY} & = 
(\hspmw{k-1}{\g})_{\RA_1 \RY_3} \sqcup (\hspmw{k-1}{\g})_{\RA_2 \RY} \sqcup (\hspmw{k-1}{\g})_{\RA_3 \RY_3} \nonumber \\ 
& \quad 
  \sqcup (\hspmw{k-1}{\g})_{\RA_1 \emptyset} \sqcup (\hspmw{k-1}{\g})_{\RA_3 \emptyset} \nonumber \\
& \quad 
  \sqcup (\hspmw{k-1}{\g})_{\RA_1 \RY_2}
  \sqcup (\hspmw{k-1}{\g})_{\RA_3 \RY_2}. \label{eq:RARY}
\end{align}

Next, recall conditions $\RB$, $\RB_2$, and $\RB_3$, that is,
\begin{equation*}
\begin{cases}
(\RB) & \text{$n_{(k-1,k)}(\bp) = 1$}, \\
(\RB_2) & \text{$(k-1,k) \in M$ and $\kappa(\bp)=(k-1,k)$}, \\
(\RB_3) & \text{$(k-1,k) \in M$ and $\kappa(\bp) \ne (k-1,k)$}
\end{cases}
\end{equation*}
for $(\bp,M) \in \hspw{k-1}{\g}$ with $g \in \{p-1,p\}$; 
note that $(k-1,k) \in M$ implies $n_{(k-1,k)}(\bp) = 1$. 
Observe that if $(\bp,M) \in (\hspw{k-1}{\g})_{\RB_2} \sqcup (\hspw{k-1}{\g})_{\RB_3}$, 
then $\bp$ is of the form: 
%
%
\begin{equation} \label{eq:B23Y-1}
\bp = (w\,;\,\underbrace{\dots\dots\dots\dots\dots\dots}_{
\begin{subarray}{c}
\text{This segment contains } \\[1mm]
\text{no label of } \\[1mm]
\text{the form $(k-1,*)$.}
\end{subarray}},
\overbrace{ (i_{1},k),\dots,(i_{s},k),(k-1,k), 
\underbrace{(j_{1},k),\dots,(j_{t},k)}_{=\,\bp_{(*,k)}^{(k-1,k)} }
}^{ =\,\bp_{(*,k)} }), 
\end{equation}
with $s,t \ge 0$; 
recall from Section~\ref{subsec:pieri} that 
$\bp_{(*,k)}^{(k-1,k)}$ denotes the segment 
in $\bp_{(*,k)}$ consisting of all labels appearing after $(k-1,k)$. 
We see that for $(\bp,M) \in (\hspw{k-1}{\g})_{\RB_2} \sqcup (\hspw{k-1}{\g})_{\RB_3}$, 
\begin{equation} \label{eq:B2iff1}
\bp_{(*,k)}^{(k-1,k)} = \emptyset \quad
\text{if and only if} \quad 
(\bp,M) \in(\hspw{k-1}{\g})_{\RB_2}; 
\end{equation}
\begin{equation} \label{eq:B2iff2}
\text{$\kappa(\bp)=(a,k)$ for some $1 \le a \le k-2$} \quad 
\text{if and only if} \quad (\bp,M) \in(\hspw{k-1}{\g})_{\RB_3}.
\end{equation}
We decompose $(\hspw{k-1}{\g})_{\RB_3} = 
(\hspw{k-1}{\g})_{ \CRBc }$ 
into the following two subsets: 
\begin{align*}
& (\hspw{k-1}{\g})_{ \CRBca } := 
\bigl\{ (\bp,M) \in \hspw{k-1}{\g} \mid 
(k-1,k) \in M, \, \kappa(\bp) \ne (k-1,k), \, \kappa(\bp) \notin M \bigr\}, \\
& (\hspw{k-1}{\g})_{ \CRBcb } := 
\bigl\{ (\bp,M) \in \hspw{k-1}{\g} \mid 
(k-1,k) \in M, \, \kappa(\bp) \ne (k-1,k), \, \kappa(\bp) \in M \bigr\};
\end{align*}
here, we consider the following conditions: 
\begin{equation} \label{eq:cB3n}
\begin{cases}
(\RB_{3}^{1}) & (k-1,k) \in M, \, \kappa(\bp) \ne (k-1,k), \text{ and } \kappa(\bp) \notin M, \\
(\RB_{3}^{2}) & (k-1,k) \in M, \, \kappa(\bp) \ne (k-1,k), \text{ and } \kappa(\bp) \in M.
\end{cases}
\end{equation}
Then, we have 
\begin{equation} \label{eq:decB3}
(\hspw{k-1}{\g})_{\RB_3} = (\hspw{k-1}{\g})_{\RB_{3}^{1}} \sqcup (\hspw{k-1}{\g})_{\RB_{3}^{2}}.
\end{equation}

\begin{ex} \label{ex:B3*}
We use the notation and setting of Example~\ref{ex2}; 
recall from Example~\ref{ex:AB} that 
$\# (\hspw{3}{2})_{ \begin{subarray}{c} (3,4) \in M \\ \kappa \ne (3,4) \end{subarray} } = 10$. 
It is easily verified that 
\begin{align*}
& \# (\hspw{3}{2})_{ \begin{subarray}{c} (3,4) \in M \\ 
\kappa \ne (3,4) \\ \kappa \not\in M \end{subarray} } = 8, \quad 
\text{e.g., } 
( (w \,;\, (1,5)_{\SB}, (2,5)_{\SB}, (3,4)_{\SQ}, (1,4)_{\SB}), \{(1,5),(3,4)\}) \in
(\hspw{3}{2})_{ \begin{subarray}{c} (3,4) \in M \\ 
\kappa \ne (3,4) \\ \kappa \not\in M \end{subarray} }; \\[3mm]
& \# (\hspw{3}{2})_{ \begin{subarray}{c} (3,4) \in M \\ 
\kappa \ne (3,4) \\ \kappa \in M \end{subarray} } = 2, \quad
\text{e.g., } 
( (w \,;\, (3,4)_{\SQ}, (1,4)_{\SB}), \{(3,4),(1,4)\}) \in 
(\hspw{3}{2})_{ \begin{subarray}{c} (3,4) \in M \\ 
\kappa \ne (3,4) \\ \kappa \in M \end{subarray} }.
\end{align*}
\end{ex}

Recall condition $\RY$, that is, $\iota(\bm) \ne (k-1,k)$ 
for $((\bp,M) \mid \bm) \in \hspmw{k-1}{\g}$, and also 
conditions $\RY_1$, $\RY_2$, and $\RY_3$ from \eqref{eq:cYn}. 
Let $\spadesuit \in \{ \RB_{2}, \RB_{3}, \RB_{3}^{1}, \RB_{3}^{2} \}$. 
We set
\begin{align*}
& (\hspmw{k-1}{\g})_{\spadesuit \RY}:=
\bigl\{ ((\bp,M) \mid \bm) \in (\hspmw{k-1}{\g})_{\RB \RY} \mid 
(\bp,M) \in (\hspw{k-1}{\g})_{\spadesuit} \bigr\}, \\
& (\hspmw{k-1}{\g})_{\spadesuit \emptyset}:=
\bigl\{ ((\bp,M) \mid \bm)  \in (\hspmw{k-1}{\g})_{\spadesuit \RY} \mid 
\bm = \emptyset \bigr\}, \\
& (\hspmw{k-1}{\g})_{\spadesuit \RY_1}:=
\bigl\{ ((\bp,M) \mid \bm) \in (\hspmw{k-1}{\g})_{\spadesuit \RY} \mid 
\bm_{(*,k)} = \emptyset \bigr\}, \\
& (\hspmw{k-1}{\g})_{\spadesuit \RY_2}:= 
\bigl\{ ((\bp,M) \mid \bm) \in (\hspmw{k-1}{\g})_{\spadesuit \RY} \mid 
\bm_{(*,k)} = \emptyset,\,
\bm_{(k,*)} \ne \emptyset \bigr\}, \\
& (\hspmw{k-1}{\g})_{\spadesuit \RY_3}:=
\bigl\{ ((\bp,M) \mid \bm) \in (\hspmw{k-1}{\g})_{\spadesuit \RY} \mid 
\bm_{(*,k)} \ne \emptyset \bigr\}. 
\end{align*}
We have
\begin{equation} \label{eq:decB23Y}
\begin{split}
& (\hspmw{k-1}{\g})_{\spadesuit \RY}
  = (\hspmw{k-1}{\g})_{\spadesuit \RY_1} \sqcup 
    (\hspmw{k-1}{\g})_{\spadesuit \RY_3}, \\
& \qquad \text{with} \quad
(\hspmw{k-1}{\g})_{\spadesuit \RY_1}
 = (\hspmw{k-1}{\g})_{\spadesuit \emptyset} \sqcup 
    (\hspmw{k-1}{\g})_{\spadesuit \RY_2}.
\end{split}
\end{equation}
In addition, 
for $\spadesuit \in \{ \RB_{2}, \RB_{3}, \RB_{3}^{1}, \RB_{3}^{2} \}$, 
we consider the following conditions on 
$((\bp,M) \mid \bm) \in (\hspmw{k-1}{\g})_{\spadesuit \RY_3}$: 
\begin{equation} \label{eq:cUT}
\begin{cases}
(\RT) & \bp_{(*,k)} \cap \bm_{(*,k)} \ne \emptyset, \\
(\RU) & \bp_{(*,k)} \cap \bm_{(*,k)} = \emptyset. 
\end{cases}
\end{equation}
Then we set
\begin{align*}
& (\hspmw{k-1}{\g})_{\spadesuit \RY_3}^{\RT}:=
\bigl\{ ((\bp,M) \mid \bm) \in (\hspmw{k-1}{\g})_{\spadesuit \RY_3} \mid 
\bp_{(*,k)} \cap \bm_{(*,k)} \ne \emptyset \bigr\}, \\
& (\hspmw{k-1}{\g})_{\spadesuit \RY_3}^{\RU}:=
\bigl\{ ((\bp,M) \mid \bm) \in (\hspmw{k-1}{\g})_{\spadesuit \RY_3} \mid 
\bp_{(*,k)} \cap \bm_{(*,k)} = \emptyset \bigr\}; 
\end{align*}
we have 
\begin{equation*}
(\hspmw{k-1}{\g})_{\spadesuit \RY_3}
= (\hspmw{k-1}{\g})_{\spadesuit \RY_3}^{\RT} \sqcup 
    (\hspmw{k-1}{\g})_{\spadesuit \RY_3}^{\RU}. 
\end{equation*}

%
\begin{rem} \label{rem:B23Y1}
Let $((\bp,M) \mid \bm) \in (\hspmw{k-1}{\g})_{\RB_2\RY_3}^{\RT} \sqcup 
(\hspmw{k-1}{\g})_{\RB_3\RY_3}^{\RT}$. 
We write $\bp$ as in \eqref{eq:B23Y-1}, and $\bm$ as:
\begin{equation*}
\bm = (\ed(\bp)\,;\,
\underbrace{(c_{1},k),\dots,(c_{u},k)}_{%
 =\,\bm_{(\ast,k)} }, 
\underbrace{(k,d_{r}),\dots,(k,d_{1})}_{%
 =\,\bm_{(k,\ast)} }), 
\end{equation*}
where $u \ge 1$ (recall condition $\RT$) and 
$c_{1} \ne k-1$ (recall condition $\RY$, that is, $\iota(\bm) \ne (k-1,k)$). 
By the definition, 
$\{ i_{1},\dots,i_{s},k-1,j_{1},\dots,j_{t} \} \cap \{c_{1},\dots,c_{u}\} \ne \emptyset$.
Recall that $1 \le c_{u'} \le k-2$ for all $1 \le u' \le u$. Since 
\begin{equation*}
(w\,;\,\dots\dots,
  (i_{1},k),\dots,(i_{s},k),(k-1,k),(j_{1},k),\dots,(j_{t},k),
  (c_{1},k),\dots,(c_{u},k))
\end{equation*}
is a directed path, and since $1 \le j_{1},\dots,j_{t} \le k-2$, 
it follows from Lemma~\ref{lem:noak} that 
\begin{equation*}
\{ i_{1},\dots,i_{s},k-1,j_{1},\dots,j_{t} \} \cap \{c_{1},\dots,c_{u}\} = 
\{ i_{1},\dots,i_{s} \} \cap \{c_{1},\dots,c_{u}\}.
\end{equation*}
In particular, we have $\kappa(\bp) \ne \iota(\bm)$. 
\end{rem}

Furthermore, 
for $\spadesuit \in \{\RB_{2},\RB_{3},\RB_{3}^{1},\RB_{3}^{2}\}$, 
we consider the following conditions on 
$\bq=((\bp,M) \mid \bm) \in (\hspmw{k-1}{\g})_{\spadesuit \RY_3}^{\RT}$; 
note that $\kappa(\bp) \ne \iota(\bm)$, as seen in Remark~\ref{rem:B23Y1}, 
and also recall condition $\RT$, that is, 
$\bp_{(*,k)} \cap \bm_{(*,k)} \ne \emptyset$: 
\begin{equation} \label{eq:cUn}
\begin{cases}
(\RT_1) & \bp_{(*,k)} \cap \bm_{(*,k)} \ne \emptyset \text{ and }
          \iota(\bm) \in \bp_{(*,k)}, \\
(\RT_2) &  \bp_{(*,k)} \cap \bm_{(*,k)} \ne \emptyset \text{ and } 
          \iota(\bm) \notin \bp_{(*,k)}, \\
(\RT_3) & \bp_{(*,k)} \cap \bm_{(*,k)} \ne \emptyset, \, 
          \iota(\bm) \in \bp_{(*,k)}, \text{ and } \kappa(\bp) \prec \iota(\bm), \\
(\RT_4) & \bp_{(*,k)} \cap \bm_{(*,k)} \ne \emptyset \text{ and }
          (\iota(\bm) \notin \bp_{(*,k)} \text{ or } \kappa(\bp) \succ \iota(\bm)). 
\end{cases}
\end{equation}
We set 
\begin{align*}
(\hspmw{k-1}{\g})_{\RB_3^1\RY_3}^{\RT_3} & :=
\bigl\{ ((\bp,M) \mid \bm) \in (\hspmw{k-1}{\g})_{\RB_3^1\RY_3}^{\RT} \mid 
\iota(\bm) \in \bp_{(*,k)} \text{ and } \kappa(\bp) \prec \iota(\bm) \bigr\}, \\[2mm]
(\hspmw{k-1}{\g})_{\RB_3^1\RY_3}^{\RT_4} & :=
(\hspmw{k-1}{\g})_{\RB_3^1\RY_3}^{\RT} \setminus 
(\hspmw{k-1}{\g})_{\RB_3^1\RY_3}^{\RT_3} \\
& = \bigl\{ ((\bp,M) \mid \bm) \in (\hspmw{k-1}{\g})_{\RB_3^1\RY_3}^{\RT} \mid 
\iota(\bm) \notin \bp_{(*,k)} \text{ or } \kappa(\bp) \succ \iota(\bm) \bigr\}, 
\end{align*}
and
\begin{align*}
(\hspmw{k-1}{\g})_{\spadesuit \RY_3}^{\RT_1} & :=
\bigl\{ ((\bp,M) \mid \bm) \in (\hspmw{k-1}{\g})_{\spadesuit \RY_3}^{\RT} \mid 
\iota(\bm) \in \bp_{(*,k)} \bigr\}, \\[3mm]
(\hspmw{k-1}{\g})_{\spadesuit \RY_3}^{\RT_2} & :=
(\hspmw{k-1}{\g})_{\spadesuit \RY_3}^{\RT} \setminus 
(\hspmw{k-1}{\g})_{\spadesuit \RY_3}^{\RT_1} \\
& = \bigl\{ ((\bp,M) \mid \bm) \in (\hspmw{k-1}{\g})_{\spadesuit \RY_3}^{\RT} \mid 
\iota(\bm) \notin \bp_{(*,k)} \bigr\}
\end{align*}
for $\spadesuit \in \{ \RB_{2},\,\RB_{3}^{2} \}$. 
Also, we define
\begin{align}
\BB_1 & :=
  (\hspmw{k-1}{\g})_{\RB_{2}\RY_{3}}^{\RT_{1}} \sqcup 
  (\hspmw{k-1}{\g})_{\RB_{3}^{1}\RY_{3}}^{\RT_{3}} \sqcup 
  (\hspmw{k-1}{\g})_{\RB_{3}^{2}\RY_{3}}^{\RT_{1}}, \label{eq:BB1} \\[1mm]
\BB_2 & :=
  (\hspmw{k-1}{\g})_{\RB_{2}\RY_{3}}^{\RT_{2}} \sqcup 
  (\hspmw{k-1}{\g})_{\RB_{3}^{1}\RY_{3}}^{\RT_{4}} \sqcup 
  (\hspmw{k-1}{\g})_{\RB_{3}^{2}\RY_{3}}^{\RT_{2}}, \label{eq:BB2} \\[1mm] 
\BB_3 & := 
(\hspmw{k-1}{\g})_{\RB_{2} \RY_3}^{\RU} \sqcup (\hspmw{k-1}{\g})_{\RB_{3} \RY_3}^{\RU}. \label{eq:BB3}
\end{align}
We have 
\begin{align}
(\hspmw{k-1}{\g})_{\RB_2\RY} \sqcup (\hspmw{k-1}{\g})_{\RB_3\RY} = \, &
\BB_1 \sqcup \BB_2 
\sqcup \overbrace{(\hspmw{k-1}{\g})_{\RB_{3}^{1} \RY_1} \sqcup \BB_3}^{=: \, \BB_4} \nonumber \\ 
& \sqcup (\hspmw{k-1}{\g})_{\RB_{2} \emptyset} \sqcup (\hspmw{k-1}{\g})_{\RB_{3}^2 \emptyset} \nonumber \\
& \sqcup (\hspmw{k-1}{\g})_{\RB_{2} \RY_2} \sqcup (\hspmw{k-1}{\g})_{\RB_{3}^{2} \RY_2}. \label{eq:decB23}
\end{align}

Finally, we see from \eqref{eq:PAB}, \eqref{eq:A123}, \eqref{eq:decB3} that
\begin{align} 
\hspw{k-1}{p-1} = \, & 
\overbrace{
(\hspw{k-1}{p-1})_{\RA_1} \sqcup 
(\hspw{k-1}{p-1})_{\RA_2} \sqcup 
(\hspw{k-1}{p-1})_{\RA_3}}^{=\,(\hspw{k-1}{p-1})_{\RA}} \nonumber \\
& \sqcup 
(\hspw{k-1}{p-1})_{\RB_1} \sqcup 
(\hspw{k-1}{p-1})_{\RB_{2}} \sqcup
\underbrace{%
(\hspw{k-1}{p-1})_{\RB_{3}^{1}} \sqcup
(\hspw{k-1}{p-1})_{\RB_{3}^{2}} }_{ =\,(\hspw{k-1}{p-1})_{\RB_{3}} };  \label{eq:A123B23}
\end{align}
recall from Remark~\ref{rem:empty} that 
$(\hspw{k-1}{p-1})_{\spadesuit}$ is identified with 
$(\hspmw{k-1}{p-1})_{\spadesuit \emptyset} \subset (\hspmw{k-1}{p-1})_{\emptyset} 
\subset \hspmw{k-1}{p-1}$ 
for each $\spadesuit \in \{ \RA_1,\,\RA_2,\,\RA_3,\,\RB_1,\,
\RB_{2},\,\RB_{3}^{1},\,\RB_{3}^{2} \}$. 

%
\subsection{Matching (2).}
\label{subsec:mat2}
Here we list the conditions 
needed in this subsection and in Section~\ref{sec:prfmat2}: 
\begin{itemize}
\item For $((\bp,M) \mid \bm) \in \hspmw{k-1}{\g}$ with $g \in \{p-1,p\}$, 
%
%
\begin{equation} \label{eq:cond2}
\begin{cases}
(\RA) & n_{(k-1,k)}(\bp)=0, \\
(\RA_1) & n_{(k-1,k)}(\bp)=0 \text{ and } \bp_{(*,k)} = \emptyset, \\
(\RA_2) & n_{(k-1,k)}(\bp)=0, \, \bp_{(*,k)} \ne \emptyset, \text{ and } \kappa(\bp) \not\in M, \\
(\RA_3) & n_{(k-1,k)}(\bp)=0, \, \bp_{(*,k)} \ne \emptyset, \text{ and } \kappa(\bp) \in M, \\
(\RB_2) & \text{$(k-1,k) \in M$ and $\kappa(\bp)=(k-1,k)$}, \\
(\RB_3) & \text{$(k-1,k) \in M$ and $\kappa(\bp) \ne (k-1,k)$}, \\
(\RB_3^1) & (k-1,k) \in M, \, \kappa(\bp) \ne (k-1,k), \, \kappa(\bp) \notin M, \\
(\RB_3^2) & (k-1,k) \in M, \, \kappa(\bp) \ne (k-1,k), \, \kappa(\bp) \in M, \\
(\RD_2) & \text{see Definition~\ref{dfn:D**}; {\bf Algorithm $(\bp_{(*,k-1)}:(k-1,k))$} ends with} \\
& \text{a directed path of the form \eqref{eq:dec12-3}}, \\
(\RT) & \bp_{(*,k)} \cap \bm_{(*,k)} \ne \emptyset, \\
(\RU) & \bp_{(*,k)} \cap \bm_{(*,k)} = \emptyset, \\
(\RT_1) & \bp_{(*,k)} \cap \bm_{(*,k)} \ne \emptyset \text{ and } \iota(\bm) \in \bp_{(*,k)}, \\
(\RT_2) & \bp_{(*,k)} \cap \bm_{(*,k)} \ne \emptyset \text{ and } \iota(\bm) \notin \bp_{(*,k)}, \\
(\RT_3) & \bp_{(*,k)} \cap \bm_{(*,k)} \ne \emptyset, \, 
          \iota(\bm) \in \bp_{(*,k)}, \text{ and } \kappa(\bp) \prec \iota(\bm), \\
(\RT_4) & \bp_{(*,k)} \cap \bm_{(*,k)} \ne \emptyset \text{ and } 
          (\iota(\bm) \notin \bp_{(*,k)} \text{ or } \kappa(\bp) \succ \iota(\bm)), \\
(\RY) & \iota(\bm) \ne (k-1,k), \\
(\RY_1) & \iota(\bm) \ne (k-1,k) \text{ and } \bm_{(*,k)} = \emptyset, \\
(\RY_2) & \iota(\bm) \ne (k-1,k),\, \bm_{(*,k)} = \emptyset, \text{ and } \bm_{(k,*)} \ne \emptyset, \\
(\RY_3) & \iota(\bm) \ne (k-1,k) \text{ and } \bm_{(*,k)} \ne \emptyset. 
\end{cases}
\end{equation}
\end{itemize}
We set
\begin{equation} \label{eq:BA}
\BA:=
(\hspmw{k-1}{\g})_{\RA_1 \RY_3} \sqcup (\hspmw{k-1}{\g})_{\RA_2 \RY} \sqcup (\hspmw{k-1}{\g})_{\RA_3 \RY_3}.
\end{equation}
Also, recall that
\begin{align*}
\BB_1 & =
  (\hspmw{k-1}{\g})_{\RB_{2}\RY_{3}}^{\RT_{1}} \sqcup 
  (\hspmw{k-1}{\g})_{\RB_{3}^{1}\RY_{3}}^{\RT_{3}} \sqcup 
  (\hspmw{k-1}{\g})_{\RB_{3}^{2}\RY_{3}}^{\RT_{1}}, \\[1mm]
\BB_2 & =
  (\hspmw{k-1}{\g})_{\RB_{2}\RY_{3}}^{\RT_{2}} \sqcup 
  (\hspmw{k-1}{\g})_{\RB_{3}^{1}\RY_{3}}^{\RT_{4}} \sqcup 
  (\hspmw{k-1}{\g})_{\RB_{3}^{2}\RY_{3}}^{\RT_{2}}, \\[1mm] 
\BB_3 & = 
(\hspmw{k-1}{\g})_{\RB_{2} \RY_3}^{\RU} \sqcup (\hspmw{k-1}{\g})_{\RB_{3} \RY_3}^{\RU}, \\[1mm]
\BB_4 & =(\hspmw{k-1}{\g})_{\RB_{3}^{1} \RY_1} \sqcup \BB_3. 
\end{align*}
%
%
\begin{prop}[to be proved in Section~\ref{sec:prfmat2}] \label{prop:mat2a}
The following equalities hold in $\K_{\infty}'$\,{\rm:} 
\begin{equation*}
(\CSF{k-1}{\g})_{\BA} = (\CSF{k-1}{\g})_{\BB_2} = (\CSF{k-1}{\g})_{\BB_4} = 0, \qquad 
(\CSF{k-1}{\g})_{\BB_1} = Q_{k-1} (\CSF{k-2}{\g-1})_{\RD_2\RY}. 
\end{equation*}
\end{prop}

From Proposition~\ref{prop:mat2a}, together with \eqref{eq:RARY} and \eqref{eq:decB23}, 
we deduce that
\begin{align}
&  (\CSF{k-1}{\g})_{\RA\RY} + (\CSF{k-1}{\g})_{\RB_2\RY} + 
   (\CSF{k-1}{\g})_{\RB_3\RY} - Q_{k-1} (\CSF{k-2}{\g-1})_{\RD_2\RY} \nonumber \\
& = (\CSF{k-1}{\g})_{\RA_1 \RY_2} + (\CSF{k-1}{\g})_{\RA_3 \RY_2}+ 
    (\CSF{k-1}{\g})_{\RB_{2}\RY_{2}} + (\CSF{k-1}{\g})_{\RB_{3}^{2}\RY_{2}} \nonumber \\
& \phantom{=}
  + \sum_{ \spadesuit \in \{ \RA_1,\,\RA_3,\,\RB_{2},\,\RB_{3}^{2} \} }
    (\CSF{k-1}{\g})_{\spadesuit \emptyset}. \label{eq:mat2a}
\end{align}
Also, it follows from \eqref{eq:A123B23} and the comment following it that 
\begin{equation} \label{eq:mat2b}
(\CSF{k-1}{p-1})_{\emptyset}
= 
\sum_{\spadesuit \in \{ \RA_1,\,\RA_2,\,\RA_3,\,\RB_{1},\,\RB_{2},\,\RB_{3}^{1},\,\RB_{3}^{2} \} } 
(\CSF{k-1}{p-1})_{\spadesuit \emptyset}.
\end{equation}
Putting together \eqref{eq:mat2a}, \eqref{eq:mat2b}, and \eqref{eq:Ind2}, we obtain
\begin{align}
\FG_{w}^{\Q} \G{k}{p} & = 
(\CSF{k-1}{p})_{\RA_1 \RY_2} + (\CSF{k-1}{p})_{\RA_3 \RY_2}+ 
(\CSF{k-1}{p})_{\RB_{2}\RY_{2}} + (\CSF{k-1}{p})_{\RB_{3}^{2}\RY_{2}} \nonumber \\[2mm]
& \phantom{=}
  + \sum_{\spadesuit \in \{ \RA_1,\,\RA_3,\,\RB_{2},\,\RB_{3}^{2} \} } 
    (\CSF{k-1}{p})_{\spadesuit \emptyset} \nonumber \\[2mm]
& \phantom{=}
 - (\CSF{k-1}{p-1})_{\RA_1 \RY_2} - (\CSF{k-1}{p-1})_{\RA_3 \RY_2} 
 - (\CSF{k-1}{p-1})_{\RB_{2}\RY_{2}} - (\CSF{k-1}{p-1})_{\RB_{3}^{2}\RY_{2}} \nonumber \\[2mm]
& \phantom{=}
  + \sum_{\spadesuit \in \{ \RA_2,\,\RB_1,\,\RB_{3}^{1} \} }
    (\CSF{k-1}{p-1})_{\spadesuit \emptyset}. \label{eq:Ind3}
\end{align}
We set
\begin{align*}
& (\hspmw{k-1}{\g})_{\RE}: = (\hspmw{k-1}{\g})_{\RA_3 \RY_2} 
  \sqcup (\hspmw{k-1}{\g})_{\RB_{2} \RY_2} \sqcup (\hspmw{k-1}{\g})_{\RB_{3}^{2} \RY_2}
  \qquad \text{for $g \in \{p-1,p\}$}, \\[3mm]
& (\hspmw{k-1}{p-1})_{\RF}: = \bigsqcup_{\spadesuit \in \{ \RA_2,\,\RB_1,\,\RB_{3}^{1} \} }
  (\hspmw{k-1}{p-1})_{\spadesuit \emptyset}, \qquad 
  (\hspmw{k-1}{p})_{\RG}: = \bigsqcup_{\spadesuit \in \{ \RA_3,\,\RB_{2},\,\RB_{3}^{2} \} }
  (\hspmw{k-1}{p})_{\spadesuit \emptyset}. 
\end{align*}
Then, by \eqref{eq:Ind3}, we have
%
%
\begin{equation} \label{eq:Ind4}
\begin{split}
\FG_{w}^{\Q} \G{k}{p} = \, & (\CSF{k-1}{p})_{\RA_1\RY_2} + (\CSF{k-1}{p})_{\RE} + 
(\CSF{k-1}{p})_{\RA_1 \emptyset} + (\CSF{k-1}{p})_{\RG} \\
& - (\CSF{k-1}{p-1})_{\RA_1\RY_2} - (\CSF{k-1}{p-1})_{\RE} + (\CSF{k-1}{p-1})_{\RF}.
\end{split}
\end{equation}

\begin{rem} \label{rem:can2} \mbox{}
\begin{enu}
\item Let $g \in \{p-1,p\}$. 
The set $(\hspmw{k-1}{\g})_{\RE}$ is identical to the subset of $\hspmw{k-1}{\g}$ 
consisting of the elements $\bq = ((\bp,M) \mid \bm)$ satisfying the condition that 
\begin{equation} \label{eq:cE}
(\RE) \quad \bp_{(*,k)} \ne \emptyset, \quad 
\kappa(\bp) \in M, \quad 
\bm_{(*,k)}=\emptyset, \quad \text{and} \quad 
\bm_{(k,*)} \ne \emptyset.
\end{equation} 

\item The set $(\hspmw{k-1}{p-1})_{\RF}$ is identical to the subset of $\hspmw{k-1}{p-1}$ 
consisting of the elements $\bq = ((\bp,M) \mid \bm)$ satisfying the condition that
\begin{equation} \label{eq:cF}
(\RF) \quad \bm=\emptyset, \quad 
\bp_{(*,k)} \ne \emptyset, \quad \text{and} \quad 
\kappa(\bp) \notin M. 
\end{equation} 

\item The set $(\hspmw{k-1}{p})_{\RG}$ is identical to the subset of $\hspmw{k-1}{p}$ 
consisting of the elements $\bq = ((\bp,M) \mid \bm)$ satisfying the condition that
\begin{equation} \label{eq:cG}
(\RG) \quad \bm=\emptyset, \quad 
\bp_{(*,k)} \ne \emptyset, \quad \text{and} \quad 
\kappa(\bp) \in M. 
\end{equation} 
\end{enu}
\end{rem}

%
\subsection{Decomposition into subsets (3).}
\label{subsec:decomp3}

Let $(\hspmw{k-1}{p-1})_{\RF_1}$ (resp., $(\hspmw{k-1}{p-1})_{\RF_2}$) 
be the subset of $\hspmw{k-1}{p-1}$ consisting of the elements 
$\bq = ((\bp,M) \mid \bm)$ satisfying 
the following condition $\RF_1$ (resp., $\RF_2$): 
\begin{equation} \label{eq:cFn}
\begin{cases}
(\RF_1) & \underbrace{\bm=\emptyset, \, \bp_{(*,k)} \ne \emptyset,\,
\kappa(\bp) \notin M}_{\text{condition $\RF$; see \eqref{eq:cF}}}, \text{ and } n_{(a,*)}(\bp) = 1, \\[10mm]
(\RF_2) & \bm=\emptyset, \, \bp_{(*,k)} \ne \emptyset,\,
\kappa(\bp) \notin M, \text{ and } n_{(a,*)}(\bp) \ge 2, 
\end{cases}
\end{equation}
where $\kappa(\bp)=(a,k)$; 
note that since $\bp_{(*,k)} \ne \emptyset$, 
it follows that $\kappa(\bp)=(a,k)$ for some $1 \le a \le k-1$. 

%
\begin{dfn} \label{dfn:kap1}
Let $\bq = ((\bp,M) \mid \emptyset) \in (\hspmw{k-1}{p-1})_{\RF_2}$. 
We define $\ip(\bp) \ge 0$ and $f_{\ip}=f_{\ip}(\bp) \ge k$ 
for $0 \le \ip \le \ip(\bp)$ 
by the following algorithm: 
\begin{enu}
\item[(1)] Set $f_{0}:=k$; 
note that $\bp_{(*,f_{0})}=\bp_{(*,k)} \ne \emptyset$.

\item[(2)] Assume that we have defined $f_{\ip}$ in such a way that 
$\bp_{(*,f_{\ip})} \ne \emptyset$ for $\ip \ge 0$. 
We write the final label of $\bp_{(*,f_{\ip})}$ as 
$(a,f_{\ip})$, with $1 \le a \le k-1$. 

\begin{enu}
\item[(2a)] If the set $\{ f > f_{\ip} \mid (a,f) \in \bp \}$ 
is empty, then we set $\ip(\bp):=\ip$, and end the algorithm. 

\item[(2b)] If the set $\{ f > f_{\ip} \mid (a,f) \in \bp \}$ 
is nonempty, then we define $f_{\ip+1}$ to be the minimum element of this set, 
and go back to the beginning of (2). 
\end{enu}
\end{enu}
\noindent
Then we define $\kappa'(\bp)$ to be the final label of 
$\bp_{(*,f_{\ip(\bp)})}$.
\end{dfn}

\begin{ex} \label{ex:kap1}
Recall from Example~\ref{ex:pieri} that 
\begin{equation*}
\bp_{3}=(w = 52173846; (1,8)_{\SB}, (5,7)_{\SB}, (2,7)_{\SB}, (3,7)_{\SB}, (4,6)_{\SB}, (1,6)_{\SB}, (5,6)_{\SB}) \in \sfp{5}{}.
\end{equation*}
It is easily verified that 
$M=\bigl\{ (1,8), (5,7), (3,7), (4,6) \bigr\}$ is a $4$-marking of $\bp_{3}$, 
and $\bq = ((\bp_{3},M) \mid \emptyset) \in (\hspmw{5}{4})_{\RF_2}$, with $k = 6$ and $p = 5$. 
We have $f_{0} = k = 6$, and $a = 5$. Since 
$\{ f > f_{0} = 6 \mid (5,f) \in \bp \} = \{ 7 \}$, we set $f_{1}:=7$. 
The final label of $(\bp_{3})_{(*,7)}$ is $(3,7)$. 
Since $\{ f > f_{1} = 7 \mid (3,f) \in \bp \} = \emptyset$, 
we end the algorithm here, and obtain $\sigma(\bp_{3})=1$, $\kappa'(\bp_{3}) := (3,7)$; 
note that $\bq$ satisfies condition $\RF_{2}^{1}$ below 
since $\kappa'(\bp_{3}) := (3,7) \in M$. 
\end{ex}

We set
\begin{align*}
(\hspmw{k-1}{p-1})_{ \begin{subarray}{l} \RF_{2} \\ \CRFa \end{subarray} } & :=
\bigl\{ ((\bp,M) \mid \emptyset) \in (\hspmw{k-1}{p-1})_{\RF_2} \mid 
\kappa'(\bp) \in M \bigr\}, \\
(\hspmw{k-1}{p-1})_{ \begin{subarray}{l} \RF_{2} \\ \CRFb \end{subarray} } & :=
\bigl\{ ((\bp,M) \mid \emptyset) \in (\hspmw{k-1}{p-1})_{\RF_2} \mid 
\kappa'(\bp) \notin M \bigr\}; 
\end{align*}
here we consider the following conditions on 
$\bq = ((\bp,M) \mid \emptyset) \in \hspmw{k-1}{p-1}$: 
\begin{equation} \label{eq:cF2n}
\begin{cases}
(\RF_2^1) & 
\underbrace{ \underbrace{ \bm=\emptyset, \, \bp_{(*,k)} \ne \emptyset,\,
\kappa(\bp) \notin M}_{%
\text{condition $\RF$; see \eqref{eq:cF}}}, \, 
n_{(a,*)}(\bp) \ge 2}_{\text{condition $\RF_2$; see \eqref{eq:cFn}}}, \text{ and } 
\kappa'(\bp) \in M,  \\[15mm]
(\RF_2^2) & 
\bm=\emptyset, \, \bp_{(*,k)} \ne \emptyset,\,
\kappa(\bp) \notin M, \, n_{(a,*)}(\bp) \ge 2, \text{ and } 
\kappa'(\bp) \not\in M, 
\end{cases}
\end{equation}
where $\kappa(\bp)=(a,k)$ for some $1 \le a \le k-1$. We have 
\begin{equation} \label{eq:RF}
(\hspmw{k-1}{p-1})_{\RF} = 
(\hspmw{k-1}{p-1})_{\RF_1} \sqcup 
\underbrace{(\hspmw{k-1}{p-1})_{\RF_2^1} \sqcup 
(\hspmw{k-1}{p-1})_{\RF_2^2}}_{ = \, (\hspmw{k-1}{p-1})_{\RF_2}}. 
\end{equation}

Next, we set
\begin{align*}
& (\hspw{k}{p})_{ \CRR } := 
\bigl\{ (\bp,M) \in \hspw{k}{p} \mid n_{(k,*)}(\bp)=0 \bigr\}, \\
& (\hspw{k}{p})_{ \CRS } := 
\bigl\{ (\bp,M) \in \hspw{k}{p} \mid n_{(k,*)}(\bp) \ge 1 \bigr\}; 
\end{align*}
here we consider the following conditions on $(\bp,M) \in \hspw{k}{p}$: 
\begin{equation} \label{eq:cRS}
\begin{cases}
(\RR) & n_{(k,*)}(\bp)=0, \\
(\RS) & n_{(k,*)}(\bp) \ge 1. 
\end{cases}
\end{equation}
For $(\bp,M) \in (\hspw{k}{p})_{\RS} = (\hspw{k}{p})_{ \CRS }$, 
we define $b(\bp) := \max \{b \ge k+1 \mid (k,b) \in \bp\bigr\}$. We set
\begin{align*}
& (\hspw{k}{p})_{
 \begin{subarray}{c}
 \CRS \\
 \CRSa
 \end{subarray} } := 
\bigl\{ (\bp,M) \in (\hspw{k}{p})_{\RS} \mid (k,b(\bp)) \in M \bigr\}, \\ 
& (\hspw{k}{p})_{
  \begin{subarray}{c}
  \CRS \\
  \CRSb
  \end{subarray} } := 
\bigl\{ (\bp,M) \in (\hspw{k}{p})_{\RS} \mid (k,b(\bp)) \not\in M \bigr\}; 
\end{align*}
here we consider the following conditions on 
$(\bp,M) \in \hspw{k}{p}$: 
\begin{equation} \label{eq:cSn}
\begin{cases}
(\RS_1) & n_{(k,*)}(\bp) \ge 1 \text{ and } (k,b(\bp)) \in M, \\
(\RS_2) & n_{(k,*)}(\bp) \ge 1 \text{ and } (k,b(\bp)) \not\in M. 
\end{cases}
\end{equation}
Let $(\hspw{k}{p})_{\RS_1^1}$ and $(\hspw{k}{p})_{\RS_1^2}$ denote 
the subsets of $\hspw{k}{p}$ consisting of the elements $(\bp,M)$ 
satisfying the following conditions ($\RS_1^1$) and ($\RS_1^2$), respectively: 
\begin{equation} \label{eq:cS1n}
\begin{cases}
(\RS_1^1) & \underbrace{
n_{(k,*)}(\bp) \ge 1,\, (k,b(\bp)) \in M }_{%
\text{condition $\RS_1$; see \eqref{eq:cSn}} }, \text{ and 
$(k,b(\bp))$ is the final label of $\bp_{(\ast,b(\bp))}$},  \\[7mm]
(\RS_1^2) & n_{(k,*)}(\bp) \ge 1, (k,b(\bp)) \in M, \text{ and 
$(k,b(\bp))$ is not the final label of $\bp_{(\ast,b(\bp))}$}. 
\end{cases}
\end{equation}

\begin{ex} \label{ex:RS}
Let $k=3$ and $p=2$. 
We use the notation and setting of Example~\ref{ex2}. 
It is easily verified that
\begin{equation*}
\begin{array}{c|c|c}
\bp & \Mark_{2}(\bp) & \\ \hline\hline
(w \,;\, (3,6)_{\SB}, (1,5)_{\SB}) & \{(3,6),(1,5)\} & \RS_{1}^{1} \\ \hline
(w \,;\, (3,6)_{\SB}, (1,5)_{\SB}, (2,5)_{\SB}) & \{(3,6),(1,5)\} & \RS_{1}^{1} \\ \hline
(w \,;\, (3,6)_{\SB}, (1,5)_{\SB}, (2,5)_{\SB}) & \{(3,6),(2,5)\} & \RS_{1}^{1} \\ \hline
(w \,;\, (3,6)_{\SB}, (1,5)_{\SB}, (2,5)_{\SB}, (3,4)_{\SQ}) & \{(3,6),(1,5)\} & \RS_{1}^{1} \\ \hline
(w \,;\, (3,6)_{\SB}, (1,5)_{\SB}, (2,5)_{\SB}, (3,4)_{\SQ}) & \{(3,6),(2,5)\} & \RS_{1}^{1} \\ \hline

(w \,;\, (3,6)_{\SB}, (1,5)_{\SB}, (3,4)_{\SQ}) & \{(3,6),(1,5)\} & \RS_{1}^{1} \\ \hline
(w \,;\, (3,6)_{\SB}, (2,5)_{\SB}) & \{(3,6),(2,5)\} & \RS_{1}^{1} \\ \hline
(w \,;\, (3,6)_{\SB}, (2,5)_{\SB}, (3,4)_{\SQ}) & \{(3,6),(2,5)\} & \RS_{1}^{1} \\ \hline
(w \,;\, (1,5)_{\SB}, (2,5)_{\SB}) & \{(1,5),(2,5)\} & \RR \\ \hline
(w \,;\, (1,5)_{\SB}, (2,5)_{\SB}, (3,4)_{\SQ}) & \{(1,5),(2,5)\} & \RS_{2} \\ \hline
(w \,;\, (1,5)_{\SB}, (2,5)_{\SB}, (3,4)_{\SQ}) & \{(1,5),(3,4)\} & \RS_{1}^{1} \\ \hline
(w \,;\, (1,5)_{\SB}, (2,5)_{\SB}, (3,4)_{\SQ}, (1,4)_{\SB}) & \{(1,5),(3,4)\} & \RS_{1}^{2} \\ \hline
(w \,;\, (1,5)_{\SB}, (2,5)_{\SB}, (3,4)_{\SQ}, (1,4)_{\SB},(2,4)_{\SB}) & \{(1,5),(3,4)\} & \RS_{1}^{2} \\ \hline
(w \,;\, (1,5)_{\SB}, (2,5)_{\SB}, (3,4)_{\SQ}, (2,4)_{\SB}) & \{(1,5),(3,4)\} & \RS_{1}^{2} \\ \hline
(w \,;\, (1,5)_{\SB}, (3,4)_{\SQ}) & \{(1,5),(3,4)\} & \RS_{1}^{1} \\ \hline
(w \,;\, (1,5)_{\SB}, (3,4)_{\SQ}, (1,4)_{\SB}) & \{(1,5),(3,4)\} & \RS_{1}^{2} \\ \hline
(w \,;\, (1,5)_{\SB}, (3,4)_{\SQ}, (1,4)_{\SB}, (2,4)_{\SB}) & \{(1,5),(3,4)\} & \RS_{1}^{2} \\ \hline
(w \,;\, (1,5)_{\SB}, (3,4)_{\SQ}, (2,4)_{\SB}) & \{(1,5),(3,4)\} & \RS_{1}^{2} \\ \hline
(w \,;\, (2,5)_{\SB}, (3,4)_{\SQ}) & \{(2,5),(3,4)\} & \RS_{1}^{1} \\ \hline
(w \,;\, (2,5)_{\SB}, (3,4)_{\SQ}, (2,4)_{\SB}) & \{(2,5),(3,4)\} & \RS_{1}^{2} \\ \hline
(w \,;\, (3,4)_{\SQ}, (1,4)_{\SB}) & \{(3,4),(1,4)\} & \RS_{1}^{2} \\ \hline
(w \,;\, (3,4)_{\SQ}, (1,4)_{\SB}, (2,4)_{\SB}) & \{(3,4),(1,4)\} & \RS_{1}^{2} \\ \hline
(w \,;\, (3,4)_{\SQ}, (2,4)_{\SB}) & \{(3,4),(2,4)\} & \RS_{1}^{2}
\end{array}
\end{equation*}
\end{ex}

%
\begin{dfn} \label{dfn:kap2}
For $(\bp,M) \in (\hspw{k}{p})_{\RS_1^2}$, 
we define $\jp(\bp) \ge 0$ and $f_{\jp}'=f_{\jp}'(\bp) \ge k+1$ for $0 \le \jp \le \jp(\bp)$ 
by the following algorithm: 
\begin{enu}
\item[(1)'] Set $f'_{0}:=b(\bp)$; 
note that $\bp_{(*,f'_{0})}=\bp_{(*,b(\bp))} \ne \emptyset$.

\item[(2)'] Assume that we have defined $f'_{\jp}$ in such a way that 
$\bp_{(*,f'_{\jp})} \ne \emptyset$ for $\jp \ge 0$. 
We write the final label of $\bp_{(*,f'_{\jp})}$ as 
$(a,f'_{\jp})$, with $1 \le a \le k-1$. 

\begin{enu}
\item[(2a)'] If the set $\{ f' > f'_{\jp} \mid (a,f') \in \bp \}$ 
is empty, then we set $\jp(\bp):=\jp$, and end the algorithm. 

\item[(2b)'] If the set $\{ f' > f'_{\jp} \mid (a,f') \in \bp \}$ 
is nonempty, then we define $f'_{\jp+1}$ to be the minimum element of this set, 
and go back to the beginning of (2)'. 
\end{enu}
\end{enu}
\noindent
Then we define $\kappa''(\bp)$ to be the final label of 
$\bp_{(*,f'_{\jp(\bp)})}$. 
\end{dfn}

\begin{ex} \label{ex:S12*}
Let $k=3$ and $p=2$. 
We use the notation and setting of Examples~\ref{ex2} and \ref{ex:RS}. 
For conditions $\RS_{1}^{\ta}$ and $\RS_{1}^{\tb}$, see \eqref{eq:cS12a} below. 
It is easily verified that
\begin{equation*}
\begin{array}{c|c|c|c}
\bp & \Mark_{2}(\bp) & \kappa''(\bp) & \\ \hline\hline
(w \,;\, (1,5)_{\SB}, (2,5)_{\SB}, (3,4)_{\SQ}, (1,4)_{\SB}) 
& \{(1,5),(3,4)\} & (1,5) & \RS_{1}^{\ta} \\ \hline
(w \,;\, (1,5)_{\SB}, (2,5)_{\SB}, (3,4)_{\SQ}, (1,4)_{\SB},(2,4)_{\SB}) 
& \{(1,5),(3,4)\} & (2,5) & \RS_{1}^{\tb} \\ \hline
(w \,;\, (1,5)_{\SB}, (2,5)_{\SB}, (3,4)_{\SQ}, (2,4)_{\SB}) 
& \{(1,5),(3,4)\} & (2,5) & \RS_{1}^{\tb} \\ \hline
(w \,;\, (1,5)_{\SB}, (3,4)_{\SQ}, (1,4)_{\SB}) 
& \{(1,5),(3,4)\} & (1,5) & \RS_{1}^{\ta} \\ \hline
(w \,;\, (1,5)_{\SB}, (3,4)_{\SQ}, (1,4)_{\SB}, (2,4)_{\SB}) 
& \{(1,5),(3,4)\} & (2,4) & \RS_{1}^{\tb} \\ \hline
(w \,;\, (1,5)_{\SB}, (3,4)_{\SQ}, (2,4)_{\SB}) 
& \{(1,5),(3,4)\} & (2,4) & \RS_{1}^{\tb} \\ \hline
(w \,;\, (2,5)_{\SB}, (3,4)_{\SQ}, (2,4)_{\SB}) 
& \{(2,5),(3,4)\} & (2,5) & \RS_{1}^{\ta} \\ \hline
(w \,;\, (3,4)_{\SQ}, (1,4)_{\SB}) 
& \{(3,4),(1,4)\} & (1,4) & \RS_{1}^{\ta} \\ \hline
(w \,;\, (3,4)_{\SQ}, (1,4)_{\SB}, (2,4)_{\SB}) 
& \{(3,4),(1,4)\} & (2,4) & \RS_{1}^{\tb} \\ \hline
(w \,;\, (3,4)_{\SQ}, (2,4)_{\SB}) 
& \{(3,4),(2,4)\} & (2,4) & \RS_{1}^{\ta}
\end{array}
\end{equation*}
\end{ex}

We set 
\begin{align*}
(\hspw{k}{p})_{\begin{subarray}{l} \RS_1^2 \\ \CRSaba \end{subarray}} & :=
\bigl\{ (\bp,M) \in (\hspw{k}{p})_{\RS_1^2} \mid 
\kappa''(\bp) \in M \bigr\}, \\
(\hspw{k}{p})_{\begin{subarray}{l} \RS_1^2 \\ \CRSabb \end{subarray}} & :=
\bigl\{ (\bp,M) \in (\hspw{k}{p})_{\RS_1^2} \mid 
\kappa''(\bp) \notin M \bigr\}; 
\end{align*}
here we consider the following conditions on $(\bp,M) \in \hspw{k}{p}$ 
(for condition $\RS_1^2$, see \eqref{eq:cS1n}): 
\begin{equation} \label{eq:cS12a}
\begin{cases}
(\RS_1^{\ta}) & (\RS_1^2) \text{ and } \kappa''(\bp) \in M, \\
(\RS_1^{\tb}) & (\RS_1^2) \text{ and } \kappa''(\bp) \not\in M. 
\end{cases}
\end{equation}
Observe that 
\begin{equation} \label{eq:RRS}
\hspw{k}{p} =  (\hspw{k}{p})_{\RR} \sqcup (\hspw{k}{p})_{\RS} =
 (\hspw{k}{p})_{\RR} \sqcup 
 \overbrace{
 (\hspw{k}{p})_{\RS_1^1} \sqcup 
 \underbrace{ 
 (\hspw{k}{p})_{\RS_1^{\ta}} \sqcup 
 (\hspw{k}{p})_{\RS_1^{\tb}} }_{ = \, (\hspw{k}{p})_{\RS_1^2} } }^{ = \, (\hspw{k}{p})_{\RS_1} }
 \sqcup 
 (\hspw{k}{p})_{\RS_2}.
\end{equation}

For $\bq=(\bp,M) \in \hspw{k}{p}$, we set 
\begin{equation} \label{eq:bEkp}
\bE{k}{p}{\bq}:=(-1)^{\ell(\bp)-p}\Q(\bp)\FG_{\ed(\bp)}^{\Q}; 
\end{equation}
see also Remark~\ref{rem:empty}. Then we define
\begin{equation*}
\CSE{k}{p}:=\sum_{\bq \in \hspw{k}{p}} \bE{k}{p}{\bq}, \qquad 
(\CSE{k}{p})_{\spadesuit}:=\sum_{\bq \in (\hspw{k}{p})_{\spadesuit}} \bE{k}{p}{\bq} 
\text{ for } \spadesuit \in \{\RR, \RS, \RS_1, \RS_2, \RS_1^1, \RS_1^2, \RS_1^{\ta}, \RS_1^{\tb} \}. 
\end{equation*}
We have 
%
%
\begin{equation} \label{eq:Ind5}
\CSE{k}{p} = 
(\CSE{k}{p})_{\RR} + 
(\CSE{k}{p})_{\RS_1^1} + 
(\CSE{k}{p})_{\RS_1^{\ta}} + 
(\CSE{k}{p})_{\RS_1^{\tb}} + 
(\CSE{k}{p})_{\RS_2}. 
\end{equation}

%
\subsection{Matching (3): Finishing the proof of Theorem~\ref{thm:pieri}.}
\label{subsec:mat3}
Here we list the conditions 
needed in Proposition~\ref{prop:mat3} below and in its proof: 
\begin{itemize}
\item For $((\bp,M) \mid \bm) \in \hspmw{k-1}{p}$, 
\begin{equation} \label{eq:cond3a}
\begin{cases}
(\RA_1) & n_{(k-1,k)}(\bp)=0 \text{ and } \bp_{(*,k)} = \emptyset, \\
(\RY_2) & \iota(\bm) \ne (k-1,k),\, \bm_{(*,k)} = \emptyset, \text{ and } \bm_{(k,*)} \ne \emptyset, \\
(\RE) & \bp_{(*,k)} \ne \emptyset,\,\kappa(\bp) \in M,\,\bm_{(*,k)}=\emptyset, 
        \text{ and } \bm_{(k,*)} \ne \emptyset, \\
(\RG) & \bm = \emptyset, \, \bp_{(*,k)} \ne \emptyset, \text{ and } \kappa(\bp) \in M. 
\end{cases}
\end{equation}

\item For $((\bp,M) \mid \bm) \in \hspmw{k-1}{p-1}$, 
\begin{equation} \label{eq:cond3b}
\begin{cases}
(\RE) & \bp_{(*,k)} \ne \emptyset,\,\kappa(\bp) \in M,\,\bm_{(*,k)}=\emptyset, 
\text{ and } \bm_{(k,*)} \ne \emptyset, \\
(\RF) & \bm=\emptyset,\,\bp_{(*,k)} \ne \emptyset, \text{ and }
\kappa(\bp) \notin M, \\
(\RF_1) & \bm=\emptyset, \, \bp_{(*,k)} \ne \emptyset,\,
\kappa(\bp) \notin M, \text{ and } n_{(a,*)}(\bp) = 1, \\
(\RF_2) & \bm=\emptyset, \, \bp_{(*,k)} \ne \emptyset,\,
\kappa(\bp) \notin M, \text{ and } n_{(a,*)}(\bp) \ge 2, \\
(\RF_2^1) & 
\bm=\emptyset, \, \bp_{(*,k)} \ne \emptyset,\,
\kappa(\bp) \notin M, \, n_{(a,*)}(\bp) \ge 2, \text{ and } 
\kappa'(\bp) \in M, \\
(\RF_2^2) & 
\bm=\emptyset, \, \bp_{(*,k)} \ne \emptyset,\,
\kappa(\bp) \notin M, \, n_{(a,*)}(\bp) \ge 2, \text{ and } 
\kappa'(\bp) \not\in M, 
\end{cases}
\end{equation}
where in conditions $\RF_1$, $\RF_2$, $\RF_2^1$, and $\RF_2^2$, 
we write $\kappa(\bp) = (a,k)$ for some $1 \le a \le k-1$. 
Also, for the definition of $\kappa'(\bp)$, see Definition~\ref{dfn:kap1}. 

\item For $(\bp, M) \in \hspw{k}{p}$, 
\begin{equation} \label{eq:cond3c}
\begin{cases}
(\RR) & n_{(k,*)}(\bp)=0, \\
(\RS) & n_{(k,*)}(\bp) \ge 1, \\
(\RS_1) & n_{(k,*)}(\bp) \ge 1 \text{ and } (k,b(\bp)) \in M, \\
(\RS_2) & n_{(k,*)}(\bp) \ge 1 \text{ and } (k,b(\bp)) \not\in M, \\
(\RS_1^1) & n_{(k,*)}(\bp) \ge 1,\, (k,b(\bp)) \in M, \text{ and } \\
          & \text{$(k,b(\bp))$ is the final label of $\bp_{(\ast,b(\bp))}$}, \\
(\RS_1^2) & n_{(k,*)}(\bp) \ge 1,\, (k,b(\bp)) \in M, \text{ and } \\
          & \text{$(k,b(\bp))$ is not the final label of $\bp_{(\ast,b(\bp))}$}, \\ 
(\RS_1^{\ta}) & (\RS_1^2) \text{ and } \kappa''(\bp) \in M, \\
(\RS_1^{\tb}) & (\RS_1^2) \text{ and } \kappa''(\bp) \not\in M, 
\end{cases}
\end{equation}
where $b(\bp) = \max \{b \ge k+1 \mid (k,b) \in \bp\bigr\}$. 
Also, for the definition of $\kappa''(\bp)$, see Definition~\ref{dfn:kap2}. 
\end{itemize}
%
%
\begin{prop}[to be proved in Section~\ref{sec:prfmat3}] \label{prop:mat3a}
The following equalities hold in $\K_{\infty}'$\,{\rm:} 
\begin{align*}
& (\CSF{k-1}{p})_{\RA_1\RY_2} = (\CSE{k}{p})_{\RS_2}, &
& (\CSF{k-1}{p})_{\RE} = (\CSE{k}{p})_{\RS_1^{\tb}} - (\CSF{k-1}{p-1})_{\RF_{2}^{2}}, \\
& (\CSF{k-1}{p})_{\RA_1 \emptyset} = (\CSE{k}{p})_{\RR}, &
& (\CSF{k-1}{p})_{\RG} = - (\CSF{k-1}{p-1})_{\RF_{1}}, \\
& (\CSF{k-1}{p-1})_{\RA_1\RY_2} = - (\CSE{k}{p})_{\RS_1^1}, &
& (\CSF{k-1}{p-1})_{\RE} = - (\CSE{k}{p})_{\RS_1^{\ta}} 
  + (\CSF{k-1}{p-1})_{\RF_{2}^{1}}. 
\end{align*}
\end{prop}

Now, we conclude that 
\begin{align*}
& \FG_{w}^{\Q}\G{k}{p} - \CSE{k}{p} \\[3mm]
& = (\CSF{k-1}{p})_{\RA_1\RY_2} + (\CSF{k-1}{p})_{\RE} + 
(\CSF{k-1}{p})_{\RA_1 \emptyset} + (\CSF{k-1}{p})_{\RG} \\
& - (\CSF{k-1}{p-1})_{\RA_1\RY_2} - (\CSF{k-1}{p-1})_{\RE} + (\CSF{k-1}{p-1})_{\RF} \\
& - (\CSE{k}{p})_{\RR} - (\CSE{k}{p})_{\RS_1^1} - (\CSE{k}{p})_{\RS_1^{\ta}} - 
  (\CSE{k}{p})_{\RS_1^{\tb}} - (\CSE{k}{p})_{\RS_2} \quad \text{by \eqref{eq:Ind4} and \eqref{eq:Ind5}} \\[3mm]
& = (\CSE{k}{p})_{\RS_2} + (\CSE{k}{p})_{\RS_1^{\tb}} - (\CSF{k-1}{p-1})_{\RF_{2}^{2}} + 
(\CSE{k}{p})_{\RR} - (\CSF{k-1}{p-1})_{\RF_{1}} \\
& + (\CSE{k}{p})_{\RS_1^1} + (\CSE{k}{p})_{\RS_1^{\ta}} 
  - (\CSF{k-1}{p-1})_{\RF_{2}^{1}} + (\CSF{k-1}{p-1})_{\RF} \\
& - (\CSE{k}{p})_{\RR} - (\CSE{k}{p})_{\RS_1^1} - (\CSE{k}{p})_{\RS_1^{\ta}} - 
  (\CSE{k}{p})_{\RS_1^{\tb}} - (\CSE{k}{p})_{\RS_2} \quad \text{by Proposition~\ref{prop:mat3a}} \\[3mm]
& = 0 \quad \text{by \eqref{eq:RF}}. 
\end{align*}
This completes the proof of Theorem~\ref{thm:pieri}.

%
\section{Proof of Proposition~\ref{prop:mat1a}.}
\label{sec:prfmat1}

Let $\g \in \{p-1,p\}$. 
Here again we list the conditions needed in this section: 
\begin{itemize}
\item For $((\bp,M) \mid \bm) \in \hspmw{k-1}{\g}$ with $g \in \{p-1,p\}$, 
\begin{equation*}
\begin{cases}
(\RA) & n_{(k-1,k)}(\bp)=0, \\
(\RB) & n_{(k-1,k)}(\bp)=1, \\
(\RB_1) & \text{$n_{(k-1,k)}(\bp)=1$ and $(k-1,k) \not\in M$}, \\
(\RB_2) & \text{$(k-1,k) \in M$ and $\kappa(\bp)=(k-1,k)$}, \\
(\RB_3) & \text{$(k-1,k) \in M$ and $\kappa(\bp) \ne (k-1,k)$}, \\
(\RX) & \iota(\bm) = (k-1,k), \\
(\RY) & \iota(\bm) \ne (k-1,k).
\end{cases}
\end{equation*}
\item For $((\bp,M) \mid \bm) \in \hspmw{k-2}{\g-1}$ with $g \in \{p-1,p\}$, 
\begin{equation*}
\begin{cases}
(\RC) & n_{(*,k-1)}(\bp) = 0, \\
(\RD) & n_{(*,k-1)}(\bp) \ge 1, \\
(\RD_1) & \text{see Definition~\ref{dfn:D**}; {\bf Algorithm $(\bp_{(*,k-1)}:(k-1,k))$} ends with} \\
& \text{a directed path of the form \eqref{eq:dec12-2}}, \\
(\RD_{11}) & \text{see Definition~\ref{dfn:D**}; ($\RD_1$) holds, and  
$\{i_{1},\dots,i_{s}\} \cap \{j_{1},\dots,j_{t}\} = \emptyset$}, \\
(\RD_{12}) & \text{see Definition~\ref{dfn:D**}; ($\RD_1$) holds, and  
$\{i_{1},\dots,i_{s}\} \cap \{j_{1},\dots,j_{t}\} \ne \emptyset$}, \\
(\RD_2) & \text{see Definition~\ref{dfn:D**}; {\bf Algorithm $(\bp_{(*,k-1)}:(k-1,k))$} ends with} \\
& \text{a directed path of the form \eqref{eq:dec12-3}}, \\
(\RX) & \iota(\bm) = (k-1,k), \\
(\RY) & \iota(\bm) \ne (k-1,k).
\end{cases}
\end{equation*}
\end{itemize}
Also, recall from \eqref{eq:bF} the definition of 
$\bF{\h}{\g}{\bq}$, $\bq \in \hspmw{\h}{\g}$, 
for $(\h,\g) \in \{(k-1,p-1),\,(k-1,p),\,(k-2,p-1),\,(k-2,p-2)\}$. 
Proposition~\ref{prop:mat1a} follows from the next proposition. 
%
%
\begin{prop} \label{prop:mat1}
Let $g \in \{p-1,p\}$. 
\begin{enu}
\item 
There exists a bijection 
$\pi_{1}:(\hspmw{k-1}{\g})_{\RA\RX} \rightarrow (\hspmw{k-1}{\g})_{\RB_1\RY}$ 
satisfying 
\begin{equation*}
\bF{k-1}{\g}{\pi_{1}(\bq)}=-\bF{k-1}{\g}{\bq} \quad 
\text{\rm for $\bq \in (\hspmw{k-1}{\g})_{\RA\RX}$}.
\end{equation*}

\item 
There exists a bijection 
$\pi_{2}:(\hspmw{k-1}{\g})_{\RA\RY} \rightarrow (\hspmw{k-1}{\g})_{\RB_1\RX}$ 
satisfying
\begin{equation*}
\bF{k-1}{\g}{\pi_{2}(\bq)} = - \Q_{k-1}\bF{k-1}{\g}{\bq}
\quad \text{\rm for $\bq \in (\hspmw{k-1}{\g})_{\RA\RY}$}.
\end{equation*} 

\item 
There exists a bijection 
$\pi_{3}:(\hspmw{k-1}{\g})_{\RB_2\RX} \rightarrow (\hspmw{k-2}{\g-1})_{\RC\RY}$ 
satisfying 
\begin{equation*}
\bF{k-2}{\g-1}{\pi_{3}(\bq)} = \Q_{k-1}^{-1}\bF{k-1}{\g}{\bq} 
\quad \text{\rm for $\bq \in (\hspmw{k-1}{\g})_{\RB_2\RX}$}.
\end{equation*} 

\item 
There exists a bijection 
$\pi_{4}:(\hspmw{k-1}{\g})_{\RB_2\RY} \rightarrow (\hspmw{k-2}{\g-1})_{\RC\RX}$ 
satisfying 
\begin{equation*}
\bF{k-2}{\g-1}{\pi_{4}(\bq)} = \bF{k-1}{\g}{\bq} 
\quad \text{\rm for $\bq \in (\hspmw{k-1}{\g})_{\RB_2\RY}$}.
\end{equation*} 

\item 
There exists a bijection 
$\pi_{5}:(\hspmw{k-1}{\g})_{\RB_3 \RX} \rightarrow (\hspmw{k-2}{\g-1})_{\RD_{11}\RY}$ 
satisfying 
\begin{equation*}
\bF{k-2}{\g-1}{\pi_{5}(\bq)} = \Q_{k-1}^{-1}\bF{k-1}{\g}{\bq}
\quad \text{\rm for $\bq \in (\hspmw{k-1}{\g})_{\RB_3\RX}$}. 
\end{equation*}

\item
There exists a bijection 
$\pi_{6}:(\hspmw{k-1}{\g})_{\RB_3 \RY} \rightarrow (\hspmw{k-2}{\g-1})_{\RD_{11}\RX}$ 
satisfying 
\begin{equation*}
\bF{k-2}{\g-1}{\pi_{6}(\bq)} = \bF{k-1}{\g}{\bq}
\quad \text{\rm for $\bq \in (\hspmw{k-1}{\g})_{\RB_3\RY}$}. 
\end{equation*} 

\item 
There exists a bijection 
$\pi_{7}:(\hspmw{k-2}{\g-1})_{\RD_{12}\RX} \rightarrow (\hspmw{k-2}{\g-1})_{\RD_2\RY}$ 
satisfying 
\begin{equation*}
\bF{k-2}{\g-1}{\pi_{7}(\bq)} = - \Q_{k-1}^{-1} \bF{k-2}{\g-1}{\bq} 
\quad \text{\rm for $\bq \in (\hspmw{k-2}{\g-1})_{\RD_{12}\RX}$}. 
\end{equation*}

\item 
There exists a bijection 
$\pi_{8}:(\hspmw{k-2}{\g-1})_{\RD_{12}\RY} \rightarrow (\hspmw{k-2}{\g-1})_{\RD_2\RX}$ 
satisfying 
\begin{equation*}
\bF{k-2}{\g-1}{\pi_{8}(\bq)} = - \bF{k-2}{\g-1}{\bq} 
\quad \text{\rm for $\bq \in (\hspmw{k-2}{\g-1})_{\RD_{12}\RY}$}.
\end{equation*}
\end{enu}
\end{prop}

\subsection{Proof of (1).}
\label{subsec:prfmat1-1}
Let $\bq=((\bp,M) \mid \bm) \in (\hspmw{k-1}{\g})_{\RA\RX}$.
We write $\bp$ and $\bm$ as: 
\begin{equation} \label{eq:pm}
\begin{split}
& \bp = (w\,;\,(a_{1},b_{1}),\dots,(a_{r},b_{r})), \\
& \bm = (\ed(\bp)\,;\,(c_{1},k),\dots,(c_{u},k),\bm_{(k,*)});
\end{split}
\end{equation}
note that $(a_{s},b_{s}) \ne (k-1,k)$ for any $1 \le s \le r$, 
and $c_{1} = k-1$. We define 
\begin{equation} \label{eq:mat1-1a}
\begin{split}
& \bp \ast (k-1,k)_{\kappa} := 
  (w\,;\,(a_{1},b_{1}),\dots,(a_{r},b_{r}),(k-1,k)), \\
& \bm \setminus (k-1,k)_{\iota} := 
  (\ed(\bp) \cdot (k-1,k)\,;\,(c_{2},k),\dots,(c_{u},k),\bm_{(k,*)}),
\end{split}
\end{equation}
and set $\pi_{1}(\bq):=((\bp \ast (k-1,k)_{\kappa},M) \mid \bm \setminus (k-1,k)_{\iota})$; 
we see that $\pi_{1}(\bq) \in (\hspmw{k-1}{\g})_{\RB_1\RY}$ and 
$\bF{k-1}{\g}{\pi_{1}(\bq)} = - \bF{k-1}{\g}{\bq}$. We show the bijectivity of the map 
$\pi_{1}:(\hspmw{k-1}{\g})_{\RA\RX} \rightarrow (\hspmw{k-1}{\g})_{\RB_1\RY}$ 
by giving its inverse. Let $\bq=((\bp,M) \mid \bm) \in (\hspmw{k-1}{\g})_{\RB_1\RY}$, 
with $\bp$ and $\bm$ as in \eqref{eq:pm}; 
note that $(a_{r},b_{r}) = (k-1,k)$ (see Remark~\ref{rem:B123}\,(1)) 
and $c_{1} \ne k-1$. We define
\begin{equation} \label{eq:mat1-1b}
\begin{split}
& \bp \setminus (k-1,k)_{\kappa} := (w\,;\,(a_{1},b_{1}),\dots,(a_{r-1},b_{r-1})), \\
& (k-1,k)_{\iota} \ast \bm 
  := (\ed(\bp) \cdot (k-1,k)\,;\,(k-1,k),(c_{1},k),\dots,(c_{u},k),\bm_{(k,*)}), 
\end{split}
\end{equation}
and set $\pi_{1}'(\bq):=((\bp \setminus (k-1,k)_{\kappa}, M) \mid (k-1,k)_{\iota} \ast \bm)$; 
we see that $\pi_{1}'(\bq) \in (\hspmw{k-1}{\g})_{\RA\RX}$ and 
$\bF{k-1}{\g}{\pi_{1}'(\bq)} = - \bF{k-1}{\g}{\bq}$. 
It is easily verified that $\pi_{1}'$ is the inverse of $\pi_{1}$. 
This proves part (1).

\subsection{Proof of (2).}
\label{subsec:prfmat1-2}
Let $\bq=((\bp,M) \mid \bm) \in (\hspmw{k-1}{\g})_{\RA\RY}$, 
with $\bp$ and $\bm$ as in \eqref{eq:pm}; 
note that $(a_{s},b_{s}) \ne (k-1,k)$ for any $1 \le s \le r$, 
and $c_{1} \ne k-1$. 
We set $\pi_{2}(\bq):=((\bp \ast (k-1,k)_{\kappa},M) \mid (k-1,k)_{\iota} \ast \bm)$, 
where $\bp \ast (k-1,k)_{\kappa}$ and $(k-1,k)_{\iota} \ast \bm$ are 
defined as in \eqref{eq:mat1-1a} and \eqref{eq:mat1-1b}, respectively; 
we see that $\pi_{2}(\bq) \in (\hspmw{k-1}{\g})_{\RB_1\RX}$ and 
$\bF{k-1}{\g}{\pi_{2}(\bq)} = - \Q_{k-1}\bF{k-1}{\g}{\bq}$.
Let us show the bijectivity of the map $\pi_{2}$. 
Let $\bq=((\bp,M) \mid \bm) \in (\hspmw{k-1}{\g})_{\RB_1\RX}$, 
with $\bp$ and $\bm$ as in \eqref{eq:pm}; 
note that $(a_{r},b_{r}) = (k-1,k)$ (see Remark~\ref{rem:B123}\,(1)), 
and $c_{1} = k-1$.
We set $\pi_{2}'(\bq):=( (\bp \setminus (k-1,k)_{\kappa},M) \mid \bm \setminus (k-1,k)_{\iota})$, 
where $\bp \setminus (k-1,k)_{\kappa}$ and $\bm \setminus (k-1,k)_{\iota}$ are defined as in \eqref{eq:mat1-1b} and \eqref{eq:mat1-1a}, respectively; 
we see that $\pi_{2}'(\bq) \in (\hspmw{k-1}{\g})_{\RA\RY}$ and 
$\bF{k-1}{\g}{\pi_{2}'(\bq)} = - \Q_{k-1}^{-1}\bF{k-1}{\g}{\bq}$. 
It is easily verified that $\pi_{2}'$ is the inverse of $\pi_{2}$.
This proves part (2).

\subsection{Proof of (3).}
\label{subsec:prfmat1-3}
Let $\bq=((\bp,M) \mid \bm) \in (\hspmw{k-1}{\g})_{\RB_2 \RX}$, 
with $\bp$ and $\bm$ as in \eqref{eq:pm}; 
note that $(a_{r},b_{r}) = (k-1,k)$, and $c_{1} = k-1$. We set 
\begin{equation*}
\pi_{3}(\bq):=( (\bp \setminus (k-1,k)_{\kappa}, M \setminus \{(k-1,k)\}) \mid \bm \setminus (k-1,k)_{\iota});
\end{equation*}
we see by Remark~\ref{rem:B123}\,(2) that $\pi_{3}(\bq) \in \hspm{k-2}{\g-1}_{\RC\RY}$ and 
$\bF{k-2}{\g-1}{\pi_{3}(\bq)} = \Q_{k-1}^{-1}\bF{k-1}{\g}{\bq}$. 
Let us show the bijectivity of the map $\pi_{3}$. 
Let $\bq=((\bp,M) \mid \bm) \in \hspm{k-2}{\g-1}_{\RC \RY}$. We set 
\begin{equation*}
\pi_{3}'(\bq):=( (\bp \ast (k-1,k)_{\kappa}, M \sqcup \{(k-1,k)\}) \mid (k-1,k)_{\iota} \ast \bm);
\end{equation*}
we see that $\pi_{3}'(\bq) \in (\hspmw{k-1}{\g})_{\RB_2\RX}$ and 
$\bF{k-2}{\g-1}{\pi_{3}'(\bq)} = \Q_{k-1}\bF{k-1}{\g}{\bq}$. 
It is easily verified that $\pi_{3}'$ is the inverse of $\pi_{3}$.
This proves part (3).

\subsection{Proof of (4).}
\label{subsec:prfmat1-4}
Let $\bq=((\bp,M) \mid \bm) \in (\hspmw{k-1}{\g})_{\RB_2\RY}$, with $\bp$ and $\bm$ as in \eqref{eq:pm}; 
note that $(a_{r},b_{r}) = (k-1,k)$, and $c_{1} \ne k-1$. We set 
\begin{equation*}
\pi_{4}(\bq):=((\bp \setminus (k-1,k)_{\kappa}, M \setminus \{(k-1,k)\}) \mid (k-1,k)_{\iota} \ast \bm); 
\end{equation*}
we see that $\pi_{4}(\bq) \in \hspm{k-2}{\g-1}_{\RC\RX}$ and 
$\bF{k-2}{\g-1}{\pi_{4}(\bq)} = \bF{k-1}{\g}{\bq}$. Let us show the bijectivity of the map $\pi_{4}$. 
Let $\bq=((\bp,M) \mid \bm) \in \hspm{k-2}{\g-1}_{\RC \RX}$. We set 
\begin{equation*}
\pi_{4}'(\bq):=( (\bp \ast (k-1,k)_{\kappa}, M \sqcup \{(k-1,k)\}) \mid \bm \setminus (k-1,k)_{\iota});
\end{equation*}
we see that $\pi_{4}'(\bq) \in (\hspmw{k-1}{\g})_{\RB_2\RY}$ and 
$\bF{k-1}{\g}{\pi_{4}'(\bq)} = \bF{k-2}{\g-1}{\bq}$. 
It is easily verified that $\pi_{4}'$ is the inverse of $\pi_{4}$.
This proves part (4).

\subsection{Proof of (5).}
\label{subsec:prfmat1-5}
Let $\bq=((\bp,M) \mid \bm) \in (\hspmw{k-1}{\g})_{\RB_3 \RX}$. 
We write $\bp$ and $\bm$ as:
%
%
\begin{equation} \label{eq:mat15a}
\begin{split}
& \bp = (w\,;\,\underbrace{\dots\dots\dots\dots\dots\dots}_{
\begin{subarray}{c}
\text{This segment contains} \\[1mm]
\text{no label of } \\[1mm]
\text{the form $(k-1,*)$;} \\[1mm]
\text{see Remark~\ref{rem:B123}\,(3).}
\end{subarray}},
\underbrace{(i_{1},k),\dots,(i_{s},k),(k-1,k),
(j_{1},k),\dots,(j_{t},k)}_{ =\,\bp_{(*,k)} }), \\[3mm]
& \bm = (\ed(\bp)\,;\,(c_{1},k),\dots,(c_{u},k),\bm_{(k,*)});
\end{split}
\end{equation}
note that $t \ge 1$, and $c_{1}=k-1$. 
Since $1 \le j_{1},\dots,j_{t} \le k-2$, 
we deduce from Lemma~\ref{lem:int}\,(2), applied to the segment 
$(k-1,k),(j_{1},k),\dots,(j_{t},k)$, that 
%
%
\begin{equation} \label{eq:mat15b}
(\underbrace{w\,;\,\dots\dots,
(i_{1},k),\dots,(i_{s},k),
(j_{1},k-1),\dots,(j_{t},k-1)}_{%
=:\,\psi_{\RB_3}(\bp)},(k-1,k))
\end{equation}
is a directed path. Also, we define $\vp_{\RB_3}(M)$ 
by replacing each label of the form $(j_{r},k)$, 
$1 \le r \le t$, in $M$ with $(j_{r},k-1)$, and then 
removing $(k-1,k) \in M$. We set 
\begin{equation*}
\pi_{5}(\bq):=( (\psi_{\RB_3}(\bp), \vp_{\RB_3}(M) ) \mid 
  \bm \setminus (k-1,k)_{\iota}); 
\end{equation*}
we see that $\pi_{5}(\bq) \in \hspm{k-2}{\g-1}_{\RD_{11}\RY}$ and 
$\bF{k-2}{\g-1}{\pi_{5}(\bq)} = \Q_{k-1}^{-1} \bF{k-1}{\g}{\bq}$. 
Let us show the bijectivity of the map $\pi_{5}$ by giving its inverse. 
Let $\bq=((\bp,M) \mid \bm) \in \hspm{k-2}{\g-1}_{\RD_{11}\RY}$, 
and assume that $\bp$ is of the form \eqref{eq:dec12-1}. 
Then we define $\psi_{\RD_{11}}(\bp)$ to be the directed path \eqref{eq:dec12-2}. 
Also, we define $\vp_{\RD_{11}}(M)$ by replacing each label of the form $(j_{r},k-1)$, 
$1 \le r \le t$, in $M$ with $(j_{r},k)$, and then adding $(k-1,k)$ to 
the resulting set. 
Since $\{i_{1},\dots,i_{s}\} \cap \{j_{1},\dots,j_{t}\} = \emptyset$ and $t \ge 1$, 
we can check that $(\psi_{\RD_{11}}(\bp), \vp_{\RD_{11}}(M)) \in (\hspw{k-1}{\g})_{\RB_3}$. 
We set 
\begin{equation*}
\pi_{5}'(\bq):=( (\psi_{\RD_{11}}(\bp), \vp_{\RD_{11}}(M)) \mid (k-1,k)_{\iota} \ast \bm);
\end{equation*}
we see that $\pi_{5}'(\bq) \in (\hspmw{k-1}{\g})_{\RB_3\RX}$ and 
$\bF{k-1}{\g}{\pi_{5}'(\bq)} = \Q_{k-1} \bF{k-2}{\g-1}{\bq}$. 
It is easily verified that $\pi_{5}'$ is the inverse of $\pi_{5}$.
This proves part (5).

\subsection{Proof of (6).}
\label{subsec:prfmat1-6}
Let $\bq=((\bp,M) \mid \bm) \in (\hspmw{k-1}{\g})_{\RB_3\RY}$, 
with $\bp$ and $\bm$ as in \eqref{eq:mat15a}; 
note that $t \ge 1$, and $c_{1} \ne k-1$. 
Define $\psi_{\RB_3}(\bp)$ and $\vp_{\RB_3}(M)$ as in the proof of (5), 
and set
\begin{equation*}
\pi_{6}(\bq):=( (\psi_{\RB_3}(\bp), \vp_{\RB_3}(M) ) \mid 
  (k-1,k)_{\iota} \ast \bm); 
\end{equation*}
we see that $\pi_{6}(\bq) \in \hspm{k-2}{\g-1}_{\RD_{11}\RX}$ 
(note that $\bF{k-2}{\g-1}{\pi(\bq)} = \bF{k-1}{\g}{\bq}$). 
Let us show the bijectivity of the map $\pi_{6}$ by giving its inverse. 
Let $\bq=((\bp,M) \mid \bm) \in \hspm{k-2}{\g-1}_{\RD_{11}\RX}$. 
Define $\psi_{\RD_{11}}(\bp)$ and $\vp_{\RD_{11}}(M)$ as in the proof of (5). 
We set 
\begin{equation*}
\pi_{6}'(\bq):=( (\psi_{\RD_{11}}(\bp), \vp_{\RD_{11}}(M)) \mid \bm \setminus (k-1,k)_{\iota});
\end{equation*}
we see that $\pi_{6}'(\bq) \in (\hspmw{k-1}{\g})_{\RB_2\RY}$ and 
$\bF{k-1}{\g}{\pi_{6}'(\bq)} = \bF{k-2}{\g-1}{\bq}$. 
It is easily verified that $\pi_{6}'$ is the inverse of $\pi_{6}$.
This proves part (6). 
%
\subsection{Proof of (7).}
\label{subsec:prfmat1-7}
Let $\bq=((\bp,M) \mid \bm) \in (\hspmw{k-2}{\g-1})_{\RD_{12}\RX}$. 
Assume that $\bp$ is of the form \eqref{eq:dec12-1}; 
recall from the definition that 
$\{i_{1},\dots,i_{s}\} \cap \{j_{1},\dots,j_{t}\} \ne \emptyset$. 
We set
\begin{equation} \label{eq:s(p)}
s(\bp):=\max \bigl\{ 1 \le s' \le s \mid i_{s'} \in \{j_{1},\dots,j_{t}\} \bigr\}. 
\end{equation}
Let $1 \le u \le t$ be such that $i_{s(\bp)} = j_{u}=:a$. 

\begin{ex} \label{ex:mat1-7a}
Recall from Example~\ref{ex:D**} that
\begin{equation*}
(\bp,M):=
( (32514 \,;\, 
  (1,5)_{\SB}, (2,5)_{\SB}, (3,4)_{\SQ}, (1,4)_{\SB},(2,4)_{\SB}), \{(1,5),(3,4)\} ) 
\in (\hspw{3}{2})_{\RD_{12}}. 
\end{equation*}
In this case, $s = 2$, $i_{1}=1$, $i_{2}=2$ and $t=3$, $j_{1}=3$, $j_{2}=1$, $j_{3}=2$. 
Hence it follows that $s(\bp)=2$ and $u=3$. 
\end{ex}

\begin{clm} \label{c471} We have $u=t$. \end{clm}

\noindent
{\it Proof of Claim~\ref{c471}.} Suppose, for a contradiction, that 
$u < t$. By condition (P2) on $\bp$, we have $j_{u+1} > j_{u}$. 
Recall from \eqref{eq:dec12-2} that 
\begin{equation*}
\begin{split}
\bigl( w\,;\, & \dots\dots, 
  (i_{1},k),\dots,\overbrace{(i_{s(\bp)},k)}^{=\,(a,k)},\dots,(i_{s},k),(k-1,k), \\
& (j_{1},k-1),\dots,\underbrace{(j_{u},k-1)}_{=\,(a,k-1)},
  (j_{u+1},k-1),\dots,(j_{t},k-1) \bigr)
\end{split}
\end{equation*}
is a directed path. Applying Lemma~\ref{lem:int}\,(2) repeatedly to the segment 
$(i_{1},k),\dots,(i_{s},k),(k-1,k)$ in the directed path above, 
we deduce that 
\begin{equation*}
\begin{split}
\bigl( w\,;\, & \dots\dots, (k-1,k), 
  (i_{1},k-1),\dots,\overbrace{(i_{s(\bp)},k-1)}^{=\,(a,k-1)},\dots,(i_{s},k-1), \\
& (j_{1},k),\dots,\underbrace{(j_{u},k)}_{=\,(a,k)},
  (j_{u+1},k),\dots,(j_{t},k) \bigr)
\end{split}
\end{equation*}
is a directed path. By Lemma~\ref{lem:int}\,(1) and 
the definition \eqref{eq:s(p)} of $s(\bp)$, 
\begin{equation*}
\begin{split}
\bigl( w\,;\, & \dots\dots, (k-1,k), 
  (i_{1},k-1),\dots,(i_{s(\bp)-1},k-1),(j_{1},k),\dots,(j_{u-1},k), \\
& \underbrace{(i_{s(\bp)},k-1)}_{=\,(a,k-1)},\underbrace{(j_{u},k)}_{=\,(a,k)},
  (j_{u+1},k),\dots,(j_{t},k), (i_{s(\bp)+1},k-1),\dots,(i_{s},k-1)\bigr)
\end{split}
\end{equation*}
is a directed path, which has the segment 
$(a,k-1),(a,k),(j_{u+1},k)$. However, since 
$a = j_{u} < j_{u+1}$, this contradicts Lemma~\ref{lem:LS29a}.
Hence we obtain $u = t$, as desired. Next, suppose, for a contradiction, 
that there exists $1 \le s' < s(\bp)$ such that 
$i_{s'} \in \{j_{1},\dots,j_{t}=j_{u}\}$; note that $i_{s'} \ne i_{s(\bp)} = a$ by (P0). 
Let $1 \le t' \le t$ be such that $i_{s'}=j_{t'}$. 
By the same argument as above, we can easily show that $t'=t$, 
and hence $i_{s'}=j_{t'} = j_{t} = a$, which is a contradiction. Hence we conclude that 
$\bigl\{ 1 \le s' \le s \mid i_{s'} \in \{j_1,\dots,j_t\} \bigr\} = \{s(\bp)\}$. 
This proves the claim. \bqed

\medskip

To summarize, we conclude that 
the element $\bp \in (\sfpw{k-2}{\g-1})_{\RD_{12}}$ is of the form:
%
%
\begin{equation} \label{eq:pD12}
\bp = \bigl( w\,;\,\dots\dots,
  \underbrace{(i_{1},k),\dots,\overbrace{(i_{s(\bp)},k)}^{=\,(a,k)},\dots,(i_{s},k)}_{%
  =\,\bp_{(*,k)} },
  \underbrace{(j_{1},k-1),\dots,\overbrace{(j_{t},k-1)}^{=\,(a,k-1)}}_{%
  =\,\bp_{(*,k-1)} } \bigr),
\end{equation}
with $\{i_{1},\dots,i_{s}\} \cap \{j_{1},\dots,j_{t}\} = \{a\}$. 
By the definition \eqref{eq:s(p)} of $s(\bp)$ and Lemma~\ref{lem:int}\,(1), we see that 
\begin{equation} \label{eq:pD12b}
\begin{split}
\bigl( w\,;\,\dots\dots,
  (i_{1},k),\dots,\, & (i_{s(\bp)-1},k),
  (j_{1},k-1),\dots,(j_{t-1},k-1), \\
& \underbrace{(i_{s(\bp)},k)}_{=\,(a,k)},
  \underbrace{(j_{t},k-1)}_{=\,(a,k-1)},(i_{s(\bp)+1},k),\dots,(i_{s},k) 
  \bigr)
\end{split}
\end{equation}
is a directed path. 
Applying Lemma~\ref{lem:int}\,(3) to the segment $(a,k),(a,k-1)$, 
we deduce that 
\begin{equation*}
\begin{split}
\bigl( w\,;\,\dots\dots,
  (i_{1},k),\dots,\, & (i_{s(\bp)-1},k),
  (j_{1},k-1),\dots,(j_{t-1},k-1), \\
& (a,k-1),(k-1,k),(i_{s(\bp)+1},k),\dots,(i_{s},k) 
  \bigr)
\end{split}
\end{equation*}
is a directed path. Similarly, by using 
Lemma~\ref{lem:int}\,(2) repeatedly, we deduce that 
\begin{equation} \label{eq:pD12a}
\begin{split}
\bigl( w\,;\,\dots\dots,
  (i_{1},k),\dots,\, & (i_{s(\bp)-1},k),
  (j_{1},k-1),\dots,(j_{t-1},k-1), \\
& \underbrace{(a,k-1)}_{%
  \begin{subarray}{c}
  =\,(j_{t},k-1) \\
  =\,(i_{s(\bp)},k-1)
  \end{subarray}},(i_{s(\bp)+1},k-1),\dots,(i_{s},k-1),(k-1,k)
  \bigr)
\end{split}
\end{equation}
is a directed path. Now we define $\psi_{\RD_{12}}(\bp)$ to be the 
directed path obtained by removing the final label $(k-1,k)$ 
from the directed path \eqref{eq:pD12a}. 
Also, we define $\vp_{\RD_{12}}(M)$ by replacing each label of the form $(i_{r},k)$, 
$s(\bp) \le r \le s$, in $M$ with $(i_{r},k-1)$. We set 
\begin{equation*}
\pi_{7}(\bq):=((\psi_{\RD_{12}}(\bp),\vp_{\RD_{12}}(M)) \mid \bm \setminus (k-1,k)_{\iota});
\end{equation*}
we see by \eqref{eq:pD12b} and \eqref{eq:pD12a} that 
$\pi_{7}(\bq) \in \hspm{k-2}{\g-1}_{\RD_2\RY}$, and that 
$\bF{k-2}{\g-1}{\pi_{7}(\bq)} = - \Q_{k-1}^{-1} \bF{k-2}{\g-1}{\bq}$. 

\begin{ex} \label{ex:mat1-7b}
Keep the notation and setting of Example~\ref{ex:mat1-7a}. Recall the element 
\begin{equation*}
(\bp,M)=
( (32514 \,;\, 
  (1,5)_{\SB}, (2,5)_{\SB}, (3,4)_{\SQ}, (1,4)_{\SB},(2,4)_{\SB}), \{(1,5),(3,4)\} ) 
\in (\hspw{3}{2})_{\RD_{12}}. 
\end{equation*}
By applying Lemma~\ref{lem:int}\,(1) to the segment 
$(2,5)_{\SB},(3,4)_{\SQ}$, and then to the segment $(2,5)_{\SB},(1,4)_{\SB}$, 
we obtain the following directed path: 
\begin{equation*}
(32514 \,;\, 
  (1,5)_{\SB}, (3,4)_{\SQ}, (1,4)_{\SB}, (2,5)_{\SB}, (2,4)_{\SB}). 
\end{equation*}
Applying Lemma~\ref{lem:int}\,(3) to the segment 
$(2,5)_{\SB},(2,4)_{\SB}$ in the directed path above, 
we obtain the following directed path
\begin{equation*}
(32514 \,;\, 
  (1,5)_{\SB}, (3,4)_{\SQ}, (1,4)_{\SB}, (2,4)_{\SB}, (4,5)_{\SB}).
\end{equation*}
In the definition above, we apply Lemma~\ref{lem:int}\,(2) repeatedly 
in order to move $(4,5)_{\SB}$ to the rightmost of the directed path. 
However, in this case, we do not need this procedure. Hence it follows that 
\begin{align*}
& \psi_{\RD_{12}}(\bp) = (32514 \,;\, 
  (1,5)_{\SB}, (3,4)_{\SQ}, (1,4)_{\SB}, (2,4)_{\SB}), \\
& \vp_{\RD_{12}}(M) = \{ (1,5), (3,4) \}; 
\end{align*}
note that $(1,5) \in M$ is not replaced by $(1,4)$ 
since $s(\bp)= 2 = s$. 
\end{ex}

Let us show the bijectivity of the map $\pi_{7}$ by giving its inverse. 
Let $\bq=((\bp,M) \mid \bm) \in \hspm{k-2}{\g-1}_{\RD_{2}\RY}$, with $\bp$ as in \eqref{eq:dec12-1}. 
Recall the definition of $t(\bp)$ from Definition~\ref{dfn:D**}.
Since $1 \le j_{1},\dots,j_{t} \le k-2$ are all distinct by (P0), 
by applying Lemma~\ref{lem:int}\,(1) to the directed path \eqref{eq:dec12-3}, we deduce that 
\begin{equation} \label{eq:D2a}
\begin{split}
  (w\,;\, & \dots\dots,(i_{1},k),\dots,(i_{s},k),(j_{t(\bp)},k),
  (j_{t(\bp)+1},k),\dots,(j_{t},k), \\
& (j_{1},k-1),\dots,(j_{t(\bp)-1},k-1),(j_{t(\bp)},k-1))
\end{split}
\end{equation}
is a directed path; let us denote this directed path by $\psi_{\RD_2}(\bp)$. 

\begin{ex} \label{ex:mat1-7c}
Keep the notation and setting of Examples~\ref{ex:D**} and \ref{ex:mat1-7b}. 
Let 
\begin{equation*}
(\bp',M') : =
( (32514 \,;\, 
  (1,5)_{\SB}, (3,4)_{\SQ}, (1,4)_{\SB},(2,4)_{\SB}), \{(1,5),(3,4)\} ) 
\in (\hspw{3}{2})_{\RD_{2}}; 
\end{equation*}
note that $s = 1$, $i_{1}=5$ and $t=3$, $j_{1}=3$, $j_{2}=1$, $j_{3}=2$. 
By running {\bf Algorithm $(\bp_{(*,4)}:(4,5))$} for $\bp'$, we obtain 
\begin{equation*}
( (32514 \,;\, 
  (1,5)_{\SB}, (3,4)_{\SQ}, (1,4)_{\SB},(2,5)_{\SB}, (2,4)_{\SB}); 
\end{equation*}
hence it follows that $t(\bp')=3$. By using Lemma~\ref{lem:int}\,(1) repeatedly, 
we can move $(2,5)_{\SB}$ to the left of the segment $(3,4)_{\SQ},(1,4)_{\SB}$ as
\begin{equation*}
( (32514 \,;\, 
  (1,5)_{\SB}, (2,5)_{\SB}, (3,4)_{\SQ}, (1,4)_{\SB},(2,4)_{\SB}) = \psi_{\RD_{2}}(\bp'); 
\end{equation*}
notice that this directed path is $\bp$ in Example~\ref{ex:mat1-7b}. 
\end{ex}

\begin{clm} \label{c472}
The directed path $\psi_{\RD_2}(\bp)$ is an element of 
$\sfp{k-2}{\g-1}_{\RD_{12}}$.
\end{clm}

\noindent
{\it Proof of Claim~\ref{c472}.}
First, we show that $i_{s} < j_{t(\bp)}$, 
from which it follows that $\psi_{\RD_2}(\bp) \in \sfp{k-2}{\g-1}_{\RD}$. 
Assume that in the directed path \eqref{eq:D2a}, 
the transposition $(i_{s},k)$ is applied to $v$. Then, the directed path 
\begin{equation*}
\begin{split}
(v\,;\, & (i_{s},k),(j_{t(\bp)},k),
  (j_{t(\bp)+1},k),\dots,(j_{t},k), \\
& (j_{1},k-1),\dots,(j_{t(\bp)-1},k-1),(j_{t(\bp)},k-1))
\end{split}
\end{equation*}
is an element of $\SP^{k-2}(v)$.
Applying Lemma~\ref{lem:LS29}\,(2) (with $k$ replaced by $k-2$) 
to the first, second, and last label of this directed path, 
we obtain $i_{s} < j_{t(\bp)}$, as desired. Next, we consider the directed path 
\begin{equation*}
\begin{split}
(w\,;\, & \dots\dots,(i_{1},k),\dots,(i_{s},k),(j_{t(\bp)},k),
  (j_{t(\bp)+1},k),\dots,(j_{t},k), \\
& \underbrace{(j_{1},k-1),\dots,(j_{t(\bp)-1},k-1),(j_{t(\bp)},k-1)}_{=:\,\bs},(k-1,k)), 
\end{split}
\end{equation*}
and run {\bf Algorithm $(\bs:(k-1,k))$} for this directed path; 
it ends with a directed path either of the form: 
\begin{equation} \label{eq:D2x}
\begin{split}
(w\,;\, & \dots\dots,(i_{1},k),\dots,(i_{s},k),(j_{t(\bp)},k),
  (j_{t(\bp)+1},k),\dots,(j_{t},k), \\
& (k-1,k),(j_{1},k),\dots,(j_{t(\bp)-1},k),(j_{t(\bp)},k)),
\end{split}
\end{equation}
or of the from:
\begin{equation*}
\begin{split}
  (w \,;\, & \dots\dots,(i_{1},k),\dots,(i_{s},k),
  (j_{t(\bp)},k),(j_{t(\bp)+1},k),\dots,(j_{t},k), \\
& (j_{1},k-1),\dots,(j_{t'-1},k-1),(j_{t'},k),(j_{t'},k-1),(j_{t'+1},k),\dots,(j_{t(\bp)},k))
\end{split}
\end{equation*}
for some $1 \le t' \le t(\bp)$. 
Suppose, for a contradiction, that the latter case happens. 
Then there exists a directed path of the form: 
\begin{equation*}
\begin{split}
(w\,;\, & \dots\dots,(i_{1},k),\dots,(i_{s},k),(j_{t(\bp)},k),(j_{t(\bp)+1},k),\dots,(j_{t},k), \\
& (j_{t'},k),(j_{t'+1},k).\dots,(j_{t(\bp)},k),
  (j_{1},k-1),\dots,(j_{t'-1},k-1),(j_{t'},k-1)); 
\end{split}
\end{equation*}
notice that this directed path has the segment 
\begin{equation*}
(j_{t(\bp)},k),(j_{t(\bp)+1},k),\dots,(j_{t},k),(j_{t'},k),(j_{t'+1},k).\dots,(j_{t(\bp)},k)
\end{equation*}
whose labels are all contained in $\{ (a,k) \mid 1 \le a \le k-2\}$. 
This contradicts Lemma~\ref{lem:noak}. Hence the former case happens, and so 
$\psi_{\RD_2}(\bp)$ is an element of $\sfp{k-2}{\g-1}_{\RD_{12}}$, as desired. 
This proves the claim. \bqed

\medskip

Also, we define $\vp_{\RD_{2}}(M)$ by replacing each label of the form $(j_{r},k-1)$, 
$t(\bp) \le r \le t$, in $M$ with $(j_{r},k)$. We set 
\begin{equation*}
\pi_{7}'(\bq):=((\psi_{\RD_{2}}(\bp),\vp_{\RD_{2}}(M)) \mid (k-1,k)_{\iota} \ast \bm);
\end{equation*}
we see that $\pi_{7}'(\bq) \in \hspm{k-2}{\g-1}_{\RD_{12}\RX}$, and that 
$\bF{k-2}{\g-1}{\pi_{7}'(\bq)} = - \Q_{k-1} \bF{k-2}{\g-1}{\bq}$. 
It is easily verified that $\pi_{7}'$ is the inverse of $\pi_{7}$.
This proves part (7). 
%
\subsection{Proof of (8).}
\label{subsec:prfmat1-8}
Let $\bq=((\bp,M) \mid \bm) \in \hspm{k-2}{g-1}_{\RD_{12}\RY}$. 
Define $\psi_{\RD_{12}}(\bp)$ and $\vp_{\RD_{12}}(M)$ as in the proof of (7), 
and set 
\begin{equation*}
\pi_{8}(\bq):=((\psi_{\RD_{12}}(\bp),\vp_{\RD_{12}}(M)) \mid (k-1,k)_{\iota} \ast \bm);
\end{equation*}
we see that $\pi_{8}(\bq) \in \hspm{k-2}{\g-1}_{\RD_{2}\RX}$, 
and that $\bF{k-2}{\g-1}{\pi_{8}(\bq)} = - \bF{k-2}{\g-1}{\bq}$.
Let us show the bijectivity of the map $\pi_{8}$ by giving its inverse. 
Let $\bq=((\bp,M) \mid \bm) \in \hspm{k-2}{\g-1}_{\RD_{2}\RX}$. 
Define $\psi_{\RD_{2}}(\bp)$ and $\vp_{\RD_{2}}(M)$ as in the proof of (7), 
and set 
\begin{equation*}
\pi_{8}'(\bq):=( (\psi_{\RD_{2}}(\bp), \vp_{\RD_{2}}(M)) \mid \bm \setminus (k-1,k)_{\iota});
\end{equation*}
we see that $\pi_{8}'(\bq) \in (\hspmw{k-1}{\g})_{\RD_{12}\RY}$, and that 
$\bF{k-2}{\g-1}{\pi_{8}'(\bq)} = - \bF{k-2}{\g-1}{\bq}$. 
It is easily verified that $\pi_{8}'$ is the inverse of $\pi_{8}$.
This proves part (8).

%
\section{Proof of Proposition~\ref{prop:mat2a}.}
\label{sec:prfmat2}

Let $\g \in \{p-1,p\}$. 
Here again we list the conditions needed in this section: 
\begin{itemize}
\item For $((\bp,M) \mid \bm) \in \hspmw{k-1}{\g}$ with $g \in \{p-1,p\}$, 
\begin{equation*}
\begin{cases}
(\RA) & n_{(k-1,k)}(\bp)=0, \\
(\RA_1) & n_{(k-1,k)}(\bp)=0 \text{ and } \bp_{(*,k)} = \emptyset, \\
(\RA_2) & n_{(k-1,k)}(\bp)=0, \, \bp_{(*,k)} \ne \emptyset, \text{ and } \kappa(\bp) \not\in M, \\
(\RA_3) & n_{(k-1,k)}(\bp)=0, \, \bp_{(*,k)} \ne \emptyset, \text{ and } \kappa(\bp) \in M, \\
(\RB_2) & \text{$(k-1,k) \in M$ and $\kappa(\bp)=(k-1,k)$}, \\
(\RB_3) & \text{$(k-1,k) \in M$ and $\kappa(\bp) \ne (k-1,k)$}, \\
(\RB_3^1) & (k-1,k) \in M, \, \kappa(\bp) \ne (k-1,k), \, \kappa(\bp) \notin M, \\
(\RB_3^2) & (k-1,k) \in M, \, \kappa(\bp) \ne (k-1,k), \, \kappa(\bp) \in M, \\
(\RD_2) & \text{see Definition~\ref{dfn:D**}; {\bf Algorithm $(\bp_{(*,k-1)}:(k-1,k))$} ends with} \\
& \text{a directed path of the form \eqref{eq:dec12-3}}, \\
(\RT) & \bp_{(*,k)} \cap \bm_{(*,k)} \ne \emptyset, \\
(\RU) & \bp_{(*,k)} \cap \bm_{(*,k)} = \emptyset, \\
(\RT_1) & \bp_{(*,k)} \cap \bm_{(*,k)} \ne \emptyset \text{ and } \iota(\bm) \in \bp_{(*,k)}, \\
(\RT_2) & \bp_{(*,k)} \cap \bm_{(*,k)} \ne \emptyset \text{ and } \iota(\bm) \notin \bp_{(*,k)}, \\
(\RT_3) & \bp_{(*,k)} \cap \bm_{(*,k)} \ne \emptyset, \, 
          \iota(\bm) \in \bp_{(*,k)}, \text{ and } \kappa(\bp) \prec \iota(\bm), \\
(\RT_4) & \bp_{(*,k)} \cap \bm_{(*,k)} \ne \emptyset \text{ and } 
          (\iota(\bm) \notin \bp_{(*,k)} \text{ or } \kappa(\bp) \succ \iota(\bm)), \\
(\RY) & \iota(\bm) \ne (k-1,k), \\
(\RY_1) & \iota(\bm) \ne (k-1,k) \text{ and } \bm_{(*,k)} = \emptyset, \\
(\RY_2) & \iota(\bm) \ne (k-1,k),\, \bm_{(*,k)} = \emptyset, \text{ and } \bm_{(k,*)} \ne \emptyset, \\
(\RY_3) & \iota(\bm) \ne (k-1,k) \text{ and } \bm_{(*,k)} \ne \emptyset. 
\end{cases}
\end{equation*}
\end{itemize}
Also, recall from \eqref{eq:BA} and \eqref{eq:BB1}--\eqref{eq:decB23} that 
\begin{align*}
\BA & :=
(\hspmw{k-1}{\g})_{\RA_1 \RY_3} \sqcup 
(\hspmw{k-1}{\g})_{\RA_2 \RY} \sqcup (\hspmw{k-1}{\g})_{\RA_3 \RY_3}, \\[1mm]
\BB_1 & :=
  (\hspmw{k-1}{\g})_{\RB_{2}\RY_{3}}^{\RT_{1}} \sqcup 
  (\hspmw{k-1}{\g})_{\RB_{3}^{1}\RY_{3}}^{\RT_{3}} \sqcup 
  (\hspmw{k-1}{\g})_{\RB_{3}^{2}\RY_{3}}^{\RT_{1}}, \\[1mm]
\BB_2 & :=
  (\hspmw{k-1}{\g})_{\RB_{2}\RY_{3}}^{\RT_{2}} \sqcup 
  (\hspmw{k-1}{\g})_{\RB_{3}^{1}\RY_{3}}^{\RT_{4}} \sqcup 
  (\hspmw{k-1}{\g})_{\RB_{3}^{2}\RY_{3}}^{\RT_{2}}, \\[1mm] 
\BB_3 & := 
(\hspmw{k-1}{\g})_{\RB_{2} \RY_3}^{\RU} \sqcup (\hspmw{k-1}{\g})_{\RB_{3} \RY_3}^{\RU}, \\[1mm]
\BB_4 & :=(\hspmw{k-1}{\g})_{\RB_{3}^{1} \RY_1} \sqcup \BB_3, 
\end{align*}
and from \eqref{eq:bF} the definition of 
$\bF{\h}{\g}{\bq}$, $\bq \in \hspmw{\h}{\g}$, 
for $(\h,\g) \in \{(k-1,p-1),\,(k-1,p),\,(k-2,p-1),\,(k-2,p-2)\}$. 
Proposition~\ref{prop:mat2a} follows from the next proposition. 
%
%
\begin{prop} \label{prop:mat2}
Let $g \in \{p-1,p\}$. 
\begin{enu}
\item 
There exists a bijection $\theta_{1}:\BA \rightarrow \BA$ satisfying the condition that 
\begin{equation*}
\bF{k-1}{\g}{\theta_{1}(\bq)} = - \bF{k-1}{\g}{\bq} \quad \text{\rm for $\bq \in \BA$}.
\end{equation*} 

\item There exists a bijection $\theta_{2}:\BB_{2} \rightarrow \BB_{2}$ 
satisfying the condition that
\begin{equation*}
\bF{k-1}{\g}{\theta_{2}(\bq)} = - \bF{k-1}{\g}{\bq} 
\quad \text{\rm for $\bq \in \BB_{2}$}. 
\end{equation*} 

\item There exists a bijection $\theta_{3}:\BB_{4} \rightarrow \BB_{4}$ 
satisfying the condition that
\begin{equation*}
\bF{k-1}{\g}{\theta_{3}(\bq)} = - \bF{k-1}{\g}{\bq} \quad \text{\rm for $\bq \in \BB_{4}$}. 
\end{equation*} 

\item There exists a bijection $\theta_{4}:\hspm{k-2}{\g-1}_{\RD_2\RY} \rightarrow \BB_{1}$ 
satisfying the condition that
\begin{equation*}
\bF{k-1}{\g}{\theta_{4}(\bq)}=\Q_{k-1} \bF{k-2}{\g-1}{\bq} 
\quad \text{\rm for $\bq \in \hspm{k-2}{\g-1}_{\RD_2\RY}$}. 
\end{equation*} 
\end{enu}
\end{prop}

\subsection{Proof of (1).}
\label{subsec:prfmat2-1}
Let $\bq=((\bp,M) \mid \bm) \in \BA = 
(\hspmw{k-1}{\g})_{\RA_1 \RY_3} \sqcup 
(\hspmw{k-1}{\g})_{\RA_2 \RY} \sqcup (\hspmw{k-1}{\g})_{\RA_3 \RY_3}$. 
Let $(a,k)$ be the final label of the $(*,k)$-segment $\bp_{(*,k)}$
of $\bp$; if $\bp_{(*,k)} = \emptyset$, then we set $a:=0$. 
Let $(b,k)$ be the initial label of the $(*,k)$-segment $\bm_{(*,k)}$
of $\bm$; if $\bm_{(*,k)} = \emptyset$, then we set $b:=0$. 
Note that $0 \le a,\,b \le k-2$. Also, it follows from Lemma~\ref{lem:noak} 
that if $b > 0$, then $(b,k) \notin \bp_{(*,k)}$. We define
\begin{equation*} 
\theta_{1}(\bq):=
\begin{cases}
((\bp \setminus (a,k)_{\kappa},M) \mid (a,k)_{\iota} \ast \bm) \\[2mm]
\hspace*{10mm} 
\text{if $\bq \in (\hspmw{k-1}{\g})_{\RA_2 \RY}$ and $a > b$}, \\[3mm]
((\bp \ast (b,k)_{\kappa},M) \mid \bm \setminus (b,k)_{\iota}) \\[2mm]
\hspace*{10mm}
\text{if $\bq \in (\hspmw{k-1}{\g})_{\RA_1 \RY_3} \sqcup (\hspmw{k-1}{\g})_{\RA_3 \RY_3}$, or } \\[2mm]
\hspace*{10mm} 
\text{if $\bq \in (\hspmw{k-1}{\g})_{\RA_2 \RY}$ and $a < b$}. 
\end{cases}
\end{equation*}
We see that $\theta_{1}(\bq) \in \BA$, and 
$\theta_{1}(\theta_{1}(\bq))=\bq$. Furthermore, we deduce that 
$\bF{k-1}{\g}{\theta_{1}(\bq)}=-\bF{k-1}{\g}{\bq}$. 
This proves part (1). 

\subsection{Proof of (2).}
\label{subsec:prfmat2-2}
Let $\bq=((\bp,M) \mid \bm) \in \BB_{2}=
  (\hspmw{k-1}{\g})_{\RB_{2}\RY_{3}}^{\RT_{2}} \sqcup 
  (\hspmw{k-1}{\g})_{\RB_{3}^{1}\RY_{3}}^{\RT_{4}} \sqcup 
  (\hspmw{k-1}{\g})_{\RB_{3}^{2}\RY_{3}}^{\RT_{2}}$. 
Let $(a,k)$ be the final label of $\bp_{(*,k)}^{(k-1,k)}$; 
if $\bp_{(*,k)}^{(k-1,k)} = \emptyset$, then we set $a:=0$. 
Let $(b,k)$ be the initial label of $\bm_{(*,k)}$; 
if $\bm_{(*,k)} = \emptyset$, then we set $b:=0$. 
We define
\begin{equation*} 
\theta_{2}(\bq):=
\begin{cases}
((\bp \setminus (a,k)_{\kappa},M) \mid (a,k)_{\iota} \ast \bm) \\[2mm]
\hspace*{10mm} 
\text{if $\bq \in (\hspmw{k-1}{\g})_{\RB_{3}^{2} \RY_3}^{\RT_2}$ and $a > b$}, \\[3mm]
((\bp \ast (b,k)_{\kappa},M) \mid \bm \setminus (b,k)_{\iota}) \\[2mm]
\hspace*{10mm}
\text{if $\bq \in (\hspmw{k-1}{\g})_{\RB_{2} \RY_3}^{\RT_2} \sqcup 
      (\hspmw{k-1}{\g})_{\RB_{3}^{1} \RY_3}^{\RT_{4}}$, or } \\[2mm]
\hspace*{10mm} 
\text{if $\bq \in (\hspmw{k-1}{\g})_{\RB_{3}^{2} \RY_3}^{\RT_2}$ and $a < b$}.
\end{cases}
\end{equation*}
We see that $\theta_{2}(\bq) \in \BB_{2}$, 
and $\theta_{2}(\theta_{2}(\bq))=\bq$. Furthermore, we deduce that 
$\bF{k-1}{\g}{\theta_{2}(\bq)}=-\bF{k-1}{\g}{\bq}$.
This proves part (2). 

\subsection{Proof of (3).}
\label{subsec:prfmat2-3}
Let $\bq=((\bp,M) \mid \bm) \in \BB_{4}=
(\hspmw{k-1}{\g})_{\RB_{3}^{1} \RY_1} \sqcup 
(\hspmw{k-1}{\g})_{\RB_{2} \RY_3}^{\RU} \sqcup (\hspmw{k-1}{\g})_{\RB_{3} \RY_3}^{\RU}$; 
recall that $(\hspmw{k-1}{\g})_{\RB_{3} \RY_3}^{\RU} = 
(\hspmw{k-1}{\g})_{\RB_{3}^{1} \RY_3}^{\RU} \sqcup (\hspmw{k-1}{\g})_{\RB_{3}^{2} \RY_3}^{\RU}$. 
Let $(a,k)$ be the final label of $\bp_{(*,k)}^{(k-1,k)}$; 
if $\bp_{(*,k)}^{(k-1,k)} = \emptyset$, then we set $a:=0$. 
Let $(b,k)$ be the initial label of $\bm_{(*,k)}$; 
if $\bm_{(*,k)} = \emptyset$, then we set $b:=0$. 
We define
\begin{equation*} 
\theta_{3}(\bq):=
\begin{cases}
((\bp \setminus (a,k)_{\kappa},M) \mid (a,k)_{\iota} \ast \bm) \\[2mm]
\hspace*{10mm} 
\text{if $\bq \in (\hspmw{k-1}{\g})_{\RB_{3}^{1} \RY_1}$, or } \\[2mm]
\hspace*{10mm} 
\text{if $\bq \in (\hspmw{k-1}{\g})_{\RB_{3}^{1} \RY_3}^{\RU}$ and $a > b$}, \\[3mm]
((\bp \ast (b,k)_{\kappa},M) \mid \bm \setminus (b,k)_{\iota}) \\[2mm]
\hspace*{10mm} 
\text{if $\bq \in (\hspmw{k-1}{\g})_{\RB_{2} \RY_3}^{\RU} \sqcup 
      (\hspmw{k-1}{\g})_{\RB_{3}^{2} \RY_3}^{\RU}$, or } \\[2mm]
\hspace*{10mm} 
\text{if $\bq \in (\hspmw{k-1}{\g})_{\RB_{3}^{1} \RY_3}^{\RU}$ and $a < b$}.
\end{cases}
\end{equation*}
We see that $\theta_{3}(\bq) \in \BB_{4}$, 
and $\theta_{3}(\theta_{3}(\bq))=\bq$. Furthermore, we deduce that 
$\bF{k-1}{\g}{\theta_{3}(\bq)}=-\bF{k-1}{\g}{\bq}$. 
This proves part (3). 

\subsection{Proof of (4).}
\label{subsec:prfmat2-4}

Let $\bq=((\bp,M) \mid \bm) \in \hspm{k-2}{\g-1}_{\RD_2\RY}$, 
and write $\bp$ and $\bm$ as:
%
%
\begin{equation} \label{eq:51a1}
\bp = (w\,;\,\dots\dots,
\overbrace{(i_{1},k),\dots,(i_{s},k)}^{ =\,\bp_{(*,k)} },
\overbrace{(j_{1},k-1),\dots,(j_{t},k-1)}^{ =\,\bp_{(*,k-1)} }), 
\end{equation}
with $t \ge 1$, and 
%
%
\begin{equation} \label{eq:51a2}
\bm = (\ed(\bp)\,;\,
\overbrace{(c_{1},k),\dots,(c_{u},k)}^{ =\,\bm_{(*,k)} },
\overbrace{(k,d_{r}),\dots,(k,d_{1})}^{ =\,\bm_{(k,*)} });
\end{equation}
if $u=0$, i.e., $\bm_{(*,k)} = \emptyset$, then we set $c_{1}:=0$. 
Note that $0 \le c_{1} \le k-2$. We consider the directed path 
\begin{equation*}
\begin{split}
\bp_{1}:=
  (w\,;\,& \dots\dots,(i_{1},k),\dots,(i_{s},k), \\
& (j_{1},k-1),\dots,(j_{t},k-1),(k-1,k),(k-1,k) );
\end{split}
\end{equation*}
notice that $\ed(\bp_{1})=\ed(\bp)$ and $\Q(\bp_{1}) = Q_{k-1}Q(\bp)$. 
Recall from \eqref{eq:D2x} that 
\begin{equation*}
\begin{split}
(w\,;\, & \dots\dots,(i_{1},k),\dots,(i_{s},k),(j_{t(\bp)},k),
  (j_{t(\bp)+1},k),\dots,(j_{t},k), \\
& (k-1,k),(j_{1},k),\dots,(j_{t(\bp)-1},k),(j_{t(\bp)},k))
\end{split}
\end{equation*}
is a directed path; 
note that $i_{s} < j_{t(\bp)}$ (see the comment preceding \eqref{eq:D2x}). 
We claim that $j_{t(\bp)} > c_{1}$. If $c_{1}=0$, then the claim is obvious. 
Assume that $c_{1} > 0$. Then, 
\begin{equation} \label{eq:51a3}
\begin{split}
(w\,;\, & \dots\dots,(i_{1},k),\dots,(i_{s},k),(j_{t(\bp)},k),
  (j_{t(\bp)+1},k),\dots,(j_{t},k), \\
& (k-1,k),(j_{1},k),\dots,(j_{t(\bp)-1},k),(j_{t(\bp)},k),(c_1,k))
\end{split}
\end{equation}
is a directed path. By using Lemma~\ref{lem:int}\,(2) repeatedly, 
we see that
\begin{equation*}
\begin{split}
(w\,;\, & \dots\dots,(i_{1},k),\dots,(i_{s},k),(k-1,k),\\
& (j_{t(\bp)},k-1),
  (j_{t(\bp)+1},k-1),\dots,(j_{t},k-1), \\
& (j_{1},k),\dots,(j_{t(\bp)-1},k),(j_{t(\bp)},k),(c_1,k))
\end{split}
\end{equation*}
is a directed path; 
note that $c_{1} \notin \{j_1,\dots,j_{t(\bp)}\}$ by Lemma~\ref{lem:noak}. 
Hence we deduce by Lemma~\ref{lem:LS29b} that
$j_{t(\bp)} > c_{1}$, as desired. 
Define a directed path $\bp'$ by removing 
the segment $(j_{t(\bp)},k),(c_1,k)$ from the directed path \eqref{eq:51a3}. 
Also, define $M'$ by replacing each label of the form $(j_{t'},k-1)$, 
$t(\bp) \le t' \le t$, in $M$ with $(j_{t'},k)$, and then adding $(k-1,k)$ to 
the resulting set. We set 
\begin{align*}
(j_{t(\bp)},k)_{\iota} \ast \bm := 
(\ed(\bp) \cdot (j_{t(\bp)},k) \,;\,(j_{t(\bp)},k),(c_{1},k),\dots,(c_{u},k),
  (k,d_{r}),\dots,(k,d_{1})). 
\end{align*}
We can check that 
\begin{equation*}
\theta_{4}(\bq) := ((\bp',M') \mid (j_{t(\bp)},k)_{\iota} \ast \bm) 
\in \BB_{1} = 
  (\hspmw{k-1}{\g})_{\RB_{2}\RY_{3}}^{\RT_{1}} \sqcup 
  (\hspmw{k-1}{\g})_{\RB_{3}^{1}\RY_{3}}^{\RT_{3}} \sqcup 
  (\hspmw{k-1}{\g})_{\RB_{3}^{2}\RY_{3}}^{\RT_{1}}; 
\end{equation*}
note that $\bF{k-1}{\g}{\theta_{4}(\bq)}=\Q_{k-1}\bF{k-2}{\g-1}{\bq}$.

We show the bijectivity of the map $\theta_{4}$ by giving its inverse. 
Let $\bq=((\bp,M) \mid \bm) \in \BB_{1}$, 
and write $\bp$ and $\bm$ as:
\begin{equation} \label{eq:12e2}
\begin{split}
& \bp = (w\,;\,\dots\dots,
\overbrace{(i_{1},k),\dots,(i_{s},k),(k-1,k),(j_{1},k),\dots,(j_{t},k)}^{ =\,\bp_{(\ast,k)} } ), \\[1mm] 
& \bm = ( \ed(\bp)\,;\,
\underbrace{(c_{1},k),\dots,(c_{u},k)}_{%
 =\,\bm_{(\ast,k)} }, (k,d_{r}),\dots,(k,d_{1}) ), 
\end{split}
\end{equation}
where $s,u \ge 1$, $t,r \ge 0$, $1 \le c_{1} \le k-2$, and 
$c_{1} \in \{i_{1},\dots,i_{s}\}$ (see Remark~\ref{rem:B23Y1}). 
Let $1 \le s' \le s$ be such that $i_{s'}=c_{1}$. 
We consider 
\begin{equation*}
(w\,;\,\dots\dots,
(i_{1},k),\dots,\underbrace{(i_{s'},k)}_{=\,(c_{1},k)},\dots,(i_{s},k),(k-1,k),
(j_{1},k),\dots,(j_{t},k), (c_{1},k)). 
\end{equation*}
By Lemma~\ref{lem:int}\,(2), 
\begin{equation*}
\begin{split}
(w\,;\, & \dots\dots,
  (i_{1},k),\dots,(i_{s'-1},k),(k-1,k), \\ 
& \underbrace{(i_{s'},k-1)}_{=\,(c_{1},k-1)},\dots,(i_{s},k-1),
  (j_{1},k),\dots,(j_{t},k), (c_{1},k))
\end{split}
\end{equation*}
is a directed path. 
Using Lemma~\ref{lem:int}\,(1), we obtain the directed path
\begin{equation*}
\begin{split}
(w\,;\, & \dots\dots,
  (i_{1},k),\dots,(i_{s'-1},k),(k-1,k), \\ 
& (j_{1},k),\dots,(j_{t},k), 
  \underbrace{(i_{s'},k-1)}_{=\,(c_{1},k-1)},
  \underbrace{(c_{1},k)}_{=\,(i_{s'},k)}, 
  (i_{s'+1},k-1), \dots,(i_{s},k-1)). 
\end{split}
\end{equation*}
By Lemma~\ref{lem:int}\,(2), we see that 
\begin{equation*}
\begin{split}
(w\,;\, & \dots\dots,
  (i_{1},k),\dots,(i_{s'-1},k),(k-1,k), \\ 
& (j_{1},k),\dots,(j_{t},k), 
  (k-1,k),(i_{s'},k-1), 
  (i_{s'+1},k-1), \dots,(i_{s},k-1))
\end{split}
\end{equation*}
is a directed path. 
Then, by Lemma~\ref{lem:int}\,(2), 
%
%
\begin{equation} \label{eq:51a4}
\begin{split}
(w\,;\, & \dots\dots,
  (i_{1},k),\dots,(i_{s'-1},k),(k-1,k),(k-1,k), \\ 
& (j_{1},k-1),\dots,(j_{t},k-1), \underbrace{(i_{s'},k-1)}_{=\,(c_{1},k-1)}, 
  (i_{s'+1},k-1), \dots,(i_{s},k-1))
\end{split}
\end{equation}
is a directed path. 
We define a directed path $\bp''$ by removing 
the segment $(k-1,k),(k-1,k)$ from the directed path \eqref{eq:51a4};
note that $\ed(\bp'')=\ed(\bp)$ and $\Q(\bp'') = \Q_{k-1}^{-1}\Q(\bp)$. 
Recall that if $t > 0$ and $n_{(j_{t},*)}(\bp) \ge 2$, 
then $j_{t} < c_{1} = i_{s'}$. 
Also, define $M''$ by replacing each label of the form $(i_{s''},k)$, 
$s' \le s'' \le s$, in $M$ by $(i_{s''},k-1)$, and then removing $(k-1,k)$ from 
the resulting set. We set 
\begin{align*}
\bm \setminus (c_{1},k)_{\iota} : =
(\ed(\bp) \cdot (c_{1},k) \,;\,(c_{2},k),\dots,(c_{u},k),
  (k,d_{r}),\dots,(k,d_{1})). 
\end{align*}
We can check that 
\begin{equation*}
\theta_{4}'(\bq) := ((\bp'',M'') \mid \bm \setminus (c_{1},k)_{\iota}) 
\in \hspm{k-2}{\g-1}_{\RD_{2}\RY}; 
\end{equation*}
note that $\bF{k-2}{\g-1}{\theta_{4}'(\bq)}=\Q_{k-1}^{-1}\bF{k-1}{\g}{\bq}$. 
It is easily verified that $\theta_{4}'$ is the inverse of $\theta_{4}$. 
This proves part (4).

%
\section{Proof of Proposition~\ref{prop:mat3a}.}
\label{sec:prfmat3}

Here again we list the conditions needed in this section: 
\begin{itemize}
\item For $((\bp,M) \mid \bm) \in \hspmw{k-1}{p}$, 
\begin{equation*}
\begin{cases}
(\RA_1) & n_{(k-1,k)}(\bp)=0 \text{ and } \bp_{(*,k)} = \emptyset, \\
(\RY_2) & \iota(\bm) \ne (k-1,k),\, \bm_{(*,k)} = \emptyset, \text{ and } \bm_{(k,*)} \ne \emptyset, \\
(\RE) & \bp_{(*,k)} \ne \emptyset,\,\kappa(\bp) \in M,\,\bm_{(*,k)}=\emptyset, 
        \text{ and } \bm_{(k,*)} \ne \emptyset, \\
(\RG) & \bm = \emptyset, \, \bp_{(*,k)} \ne \emptyset, \text{ and } \kappa(\bp) \in M. 
\end{cases}
\end{equation*}

\item For $((\bp,M) \mid \bm) \in \hspmw{k-1}{p-1}$, 
\begin{equation*}
\begin{cases}
(\RE) & \bp_{(*,k)} \ne \emptyset,\,\kappa(\bp) \in M,\,\bm_{(*,k)}=\emptyset, 
\text{ and } \bm_{(k,*)} \ne \emptyset, \\
(\RF) & \bm=\emptyset,\,\bp_{(*,k)} \ne \emptyset, \text{ and }
\kappa(\bp) \notin M, \\
(\RF_1) & \bm=\emptyset, \, \bp_{(*,k)} \ne \emptyset,\,
\kappa(\bp) \notin M, \text{ and } n_{(a,*)}(\bp) = 1, \\
(\RF_2) & \bm=\emptyset, \, \bp_{(*,k)} \ne \emptyset,\,
\kappa(\bp) \notin M, \text{ and } n_{(a,*)}(\bp) \ge 2, \\
(\RF_2^1) & 
\bm=\emptyset, \, \bp_{(*,k)} \ne \emptyset,\,
\kappa(\bp) \notin M, \, n_{(a,*)}(\bp) \ge 2, \text{ and } 
\kappa'(\bp) \in M, \\
(\RF_2^2) & 
\bm=\emptyset, \, \bp_{(*,k)} \ne \emptyset,\,
\kappa(\bp) \notin M, \, n_{(a,*)}(\bp) \ge 2, \text{ and } 
\kappa'(\bp) \not\in M, 
\end{cases}
\end{equation*}
where in conditions $\RF_1$, $\RF_2$, $\RF_2^1$, and $\RF_2^2$, 
we write $\kappa(\bp) = (a,k)$ for some $1 \le a \le k-1$. 
Also, for the definition of $\kappa'(\bp)$, see Definition~\ref{dfn:kap1}. 

\item For $(\bp, M) \in \hspw{k}{p}$, 
\begin{equation*}
\begin{cases}
(\RR) & n_{(k,*)}(\bp)=0, \\
(\RS) & n_{(k,*)}(\bp) \ge 1, \\
(\RS_1) & n_{(k,*)}(\bp) \ge 1 \text{ and } (k,b(\bp)) \in M, \\
(\RS_2) & n_{(k,*)}(\bp) \ge 1 \text{ and } (k,b(\bp)) \not\in M, \\
(\RS_1^1) & n_{(k,*)}(\bp) \ge 1,\, (k,b(\bp)) \in M, \text{ and } \\
          & \text{$(k,b(\bp))$ is the final label of $\bp_{(\ast,b(\bp))}$}, \\
(\RS_1^2) & n_{(k,*)}(\bp) \ge 1,\, (k,b(\bp)) \in M, \text{ and } \\
          & \text{$(k,b(\bp))$ is not the final label of $\bp_{(\ast,b(\bp))}$}, \\ 
(\RS_1^{\ta}) & (\RS_1^2) \text{ and } \kappa''(\bp) \in M, \\
(\RS_1^{\tb}) & (\RS_1^2) \text{ and } \kappa''(\bp) \not\in M, 
\end{cases}
\end{equation*}
where $b(\bp) = \max \{b \ge k+1 \mid (k,b) \in \bp\bigr\}$. 
Also, for the definition of $\kappa''(\bp)$, see Definition~\ref{dfn:kap2}. 
\end{itemize}
Recall from \eqref{eq:bF} the definition of 
$\bF{\h}{\g}{\bq}$, $\bq \in \hspmw{\h}{\g}$, and 
from \eqref{eq:bEkp} the definition of 
$\bE{k}{p}{\bq}$, $\bq \in \hspw{k}{p}$. 
Proposition~\ref{prop:mat3a} follows from the next proposition. 
%
%
\begin{prop} \label{prop:mat3} \mbox{}
\begin{enu}
\item There exists a bijection 
$\chi_{1}:(\hspmw{k-1}{p})_{\RA_1\RY_2} \rightarrow (\hspw{k}{p})_{\RS_2}$ 
satisfying the condition that 
\begin{equation*}
\bE{k}{p}{\chi_{1}(\bq)} = \bF{k-1}{p}{\bq} 
\quad \text{\rm for $\bq \in (\hspmw{k-1}{p})_{\RA_1\RY_2}$}.
\end{equation*} 

\item There exists a bijection 
$\chi_{2}:(\hspmw{k-1}{p})_{\RE} \rightarrow 
(\hspw{k}{p})_{\RS_1^{\tb}} \sqcup (\hspmw{k-1}{p-1})_{\RF_2^2}$ 
satisfying the conditions that 
\begin{align*}
& \bE{k}{p}{\chi_{2}(\bq)} = \bF{k-1}{p}{\bq} 
\quad \text{\rm for $\bq \in (\hspmw{k-1}{p})_{\RE}$ such that 
$\chi_{2}(\bq) \in (\hspw{k}{p})_{\RS_1^{\tb}}$}, \\
& \bF{k-1}{p-1}{\chi_{2}(\bq)} = 
- \bF{k-1}{p}{\bq} \quad \text{\rm for 
$\bq \in (\hspmw{k-1}{p})_{\RE}$ such that 
$\chi_{2}(\bq) \in (\hspmw{k-1}{p-1})_{\RF_2^2}$}.
\end{align*} 

\item There exists a bijection 
$\chi_{3}:(\hspmw{k-1}{p})_{\RA_1\emptyset} \rightarrow (\hspw{k}{p})_{\RR}$ 
satisfying the condition that 
\begin{equation*} 
\bE{k}{p}{\chi_{3}(\bq)} = \bF{k-1}{p}{\bq}
\quad \text{\rm for $\bq \in (\hspmw{k-1}{p})_{\RA_1\emptyset}$}. 
\end{equation*} 

\item There exists a bijection 
$\chi_{4}:(\hspmw{k-1}{p})_{\RG} \rightarrow (\hspmw{k-1}{p-1})_{\RF_1}$ 
satisfying the condition that 
\begin{equation*} 
\bF{k-1}{p-1}{\chi_{4}(\bq)} = - \bF{k-1}{p}{\bq} 
\quad \text{\rm for $\bq \in (\hspmw{k-1}{p})_{\RG}$}.
\end{equation*} 

\item There exists a bijection
 $\chi_{5}:(\hspmw{k-1}{p-1})_{\RA_1\RY_2} \rightarrow (\hspw{k}{p})_{\RS_1^1}$ 
satisfying the condition that 
\begin{equation*} 
\bE{k}{p}{\chi_{5}(\bq)} = - \bF{k-1}{p-1}{\bq} \quad 
\text{for \rm $\bq \in (\hspmw{k-1}{p-1})_{\RA_1\RY_2}$}. 
\end{equation*} 

\item There exists a bijection 
$\chi_{6}:(\hspmw{k-1}{p-1})_{\RE} \rightarrow 
(\hspw{k}{p})_{\RS_1^{\ta}} \sqcup (\hspmw{k-1}{p-1})_{\RF_2^1}$ 
satisfying the conditions that 
\begin{align*}
& \bE{k}{p}{\chi_{6}(\bq)} = 
  - \bF{k-1}{p-1}{\bq} \quad \text{\rm for 
  $\bq \in (\hspmw{k-1}{p-1})_{\RE}$ such that 
  $\chi_{6}(\bq) \in (\hspw{k}{p})_{\RS_1^{\ta}}$}, \\
& \bF{k-1}{p-1}{\chi_{6}(\bq)} = 
  \bF{k-1}{p-1}{\bq} \quad \text{\rm for 
  $\bq \in (\hspmw{k-1}{p-1})_{\RE}$ such that 
  $\chi_{6}(\bq) \in (\hspmw{k-1}{p-1})_{\RF_2^1}$}. 
\end{align*}
\end{enu}
\end{prop}
In order to prove Proposition~\ref{prop:mat3}, we freely make use of 
two procedures, that is, insertion and deletion, which are defined 
in Appendix~\ref{sec:ins}; 
we also use the notation of Appendix~\ref{sec:ins}. 

\subsection{Proofs of (1) and (5).}
\label{subsec:prfmat3-15}

Let $\g \in \{p-1,p\}$. 
Let $\bq=((\bp,M) \mid \bm) \in (\hspmw{k-1}{\g})_{\RA_1\RY_2}$, and write $\bp$ and $\bm$ as:
\begin{equation} \label{eq:31p}
\bp = (w\,;\,\bp_{(*,d)}, \bp_{(*,d-1)},\dots,\bp_{(*,k+1)},\bp_{(*,k)}), 
\end{equation}
\begin{equation} \label{eq:31m}
\bm = (\ed(\bp)\,;\,(k,d_{r}),\dots,(k,d_{1})),
\end{equation}
for $d \ge d_{r} > \cdots > d_{1} \ge k+1$; 
note that $r \ge 1$. We define 
\begin{equation} \label{eq:zeta}
(\bp \leftarrow \bm):=
( \cdots ((\bp \leftarrow (k,d_{r})) \leftarrow (k,d_{r-1})) 
 \leftarrow \cdots \leftarrow (k,d_{1}) ); 
\end{equation}
note that $\bp \leftarrow \bm$ is the directed path obtained 
by adding $(k,d_{t})$ to the end of $\bp_{(*,d_{t})}$ in $\bp$ (of the form \eqref{eq:31p})
for $1 \le t \le r$. 
If $\g = p$, then we set  $\chi_{1}(\bq):=(\bp \leftarrow \bm,M)$; 
it is easily seen that $\chi_{1}(\bq) \in \hsp{k}{p}_{\RS_2}$, and 
$\bE{k}{p}{\chi_{1}(\bq)} = \bF{k-1}{p}{\bq}$. Similarly, 
if $\g = p-1$, then we set  $\chi_{5}(\bq):=(\bp \leftarrow \bm,M \sqcup \{(k,d_{r})\})$; 
it is easily seen that $\chi_{5}(\bq) \in \hsp{k}{p}_{\RS_1^1}$, and 
$\bE{k}{p}{\chi_{5}(\bq)} = - \bF{k-1}{p-1}{\bq}$. 

We show the bijectivity of the maps $\chi_{1}$ and $\chi_{5}$ 
by giving their inverses. 
Let $\bq=(\bp,M) \in \hsp{k}{p}_{\RS_2} \sqcup \hsp{k}{p}_{\RS_1^1}$. 
Let 
\begin{equation} \label{eq:31d}
\{d_{r} > \cdots > d_{1} \} = \{ d \ge k+1 \mid (k,d) \in \bp \}; 
\end{equation}
note that $(k,d_{t})$ is the final label of 
$\bp_{(*,d_{t})}$ for $1 \le t \le r$. Then we set
\begin{equation} \label{eq:xi}
\begin{split}
\xi(\bq) & := ( \cdots ((\bp \rightarrow (k,d_{1})) \rightarrow (k,d_{2})) 
 \rightarrow \cdots \rightarrow (k,d_{r}) ), \\
\mu(\bq) & := (\ed(\bp)\,;\,(k,d_{r}),\dots,(k,d_{1}));
\end{split}
\end{equation}
observe that $\xi(\bq)$ is the directed path obtained from $\bp$ 
by removing $(k,d_{t})$ from the end of $\bp_{(*,d_{t})}$ in $\bp$ 
for $1 \le t \le r$. 
%
%
If $\bq \in \hsp{k}{p}_{\RS_2}$, then we set 
$\chi_{1}'(\bq):=((\xi(\bq),M) \mid \mu(\bq))$; 
it is easily verified that $\chi_{1}'(\bq) \in (\hspmw{k-1}{p})_{\RA_1\RY_2}$, and 
$\chi_{1}'$ is the inverse of $\chi_{1}$. Similarly, 
if $\bq \in \hsp{k}{p}_{\RS_1^1}$, then we set 
$\chi_{5}'(\bq):=((\xi(\bq),M \setminus \{(k,d_{r})\}) \mid \mu(\bq))$; 
it is easily verified that $\chi_{5}'(\bq) \in (\hspmw{k-1}{p-1})_{\RA_1\RY_2}$, and 
$\chi_{5}'$ is the inverse of $\chi_{5}$. This proves parts (1) and (5). 

\subsection{Proofs of (2) and (6).}
\label{subsec:prfmat3-26}

Let $\g \in \{p-1,p\}$. 
Let $\bq=((\bp,M) \mid \bm) \in (\hspmw{k-1}{\g})_{\RE}$, and 
write $\bp$ and $\bm$ as in \eqref{eq:31p} and \eqref{eq:31m}, respectively 
(see also Remark~\ref{rem:can2}). We define 
\begin{equation*}
\zeta_{t}(\bp):=( \cdots ((\bp \leftarrow (k,d_{r})) \leftarrow (k,d_{r-1})) 
 \leftarrow \cdots \leftarrow (k,d_{t}) ) \quad \text{for $1 \le t \le r$},  
\end{equation*}
and $\zeta_{1}(\bp):=(\bp \leftarrow \bm)$. 

Assume that in the sequence of insertions for obtaining $\bp \leftarrow \bm$, 
a directed path of the form \eqref{eq:Ia} appears when $(k,d_{u})$ is inserted for some $1 \le u \le r$; 
take the largest $u$ satisfying this condition.
Then there exist segments $\bs_{u}',\bs_{u+1}',\dots,\bs_{r-1}',\bs_{r}'$ 
in $\bp_{(*,k)}$ satisfying the following conditions: 
\begin{itemize}
\item[(1)] $\iota(\bs_{u}') = \iota(\bp_{(*,k)})$, 
$\kappa(\bs_{r}') = \kappa(\bp_{(*,k)})$, and 
$\kappa(\bs_{t}') = \iota(\bs_{t+1}')$ for $u \le t \le r-1$; 

\item[(2)] $\zeta_{u}(\bp)$ is the directed path obtained from $\bp$ by removing $\bp_{(*,k)}$, then 
adding $\bs_{t}$ to the end of $\bp_{(*,d_{t})}$ in $\bp$ for $u+1 \le t \le r$, and 
adding $(k,d_{u}),\bs_{u}$ to the end of $\bp_{(*,d_{u})}$, 
where $\bs_{t}$ is defined by replacing $(i,k)$ in $\bs_{t}'$ with $(i,d_{t})$ for $u \le t \le r$. 
\end{itemize}

\noindent
Also, we deduce that $\zeta_{1}(\bp)=(\bp \leftarrow \bm)$ is the directed path obtained by adding 
$(k,d_{t})$ to the end of $\bp_{(*,d_{t})}$ in $\zeta_{u}(\bp)$ for $1 \le t < u$. 
We set $K_{1}:= \bp_{(*,k)} \cap M$; note that for each $(i,k) \in K_{1}$ with $(i,k) \ne \kappa(\bp)$, 
there exists a unique $u+1 \le t_{i} \le r$ such that $(i,k) \in \bs_{t_{i}}$ 
and $(i,k) \ne \kappa(\bs_{t_{i}})$. We set $K_{2}:=\bigl\{ (i,d_{t_i}) \mid 
\text{$(i,k) \in K_{1}$ with $(i,k) \ne \kappa(\bp)$} \bigr\}$, and then 
\begin{equation*}
M_{\bq}:=
 \begin{cases}
 (M \setminus K_{1}) \sqcup K_{2} \sqcup \{(k,d_{u})\}
 & \text{if $\g = p$}, \\[1mm]
 (M \setminus K_{1}) \sqcup K_{2} \sqcup \{(k,d_{u}),\kappa(\bs_{r})\}
 & \text{if $\g = p-1$}. 
\end{cases}
\end{equation*}
We deduce that if $\g = p$, then 
$\chi_{2}(\bq):=(\bp \leftarrow \bm,M_{\bq}) \in (\hspw{k}{p})_{\RS_{1}^{\tb}}$ and 
$\bE{k}{p}{\chi_{2}(\bq)} = \bF{k-1}{p}{\bq}$, and that 
if $\g = p-1$, then 
$\chi_{6}(\bq):=(\bp \leftarrow \bm,M_{\bq}) \in (\hspw{k}{p})_{\RS_{1}^{\ta}}$ and 
$\bE{k}{p}{\chi_{6}(\bq)} = - \bF{k-1}{p-1}{\bq}$. 

Assume that in the sequence of insertions for obtaining $\bp \leftarrow \bm$, 
a directed path of the form \eqref{eq:Ia} does not appear when $(k,d_{t})$ is inserted for any $1 \le t \le r$. 
Then there exist segments $\bs_{0}',\bs_{1}',\dots,\bs_{r-1}',\bs_{r}'$ 
in $\bp_{(*,k)}$ satisfying the following conditions: 
\begin{itemize}
\item[(1)'] $\iota(\bs_{0}') = \iota(\bp_{(*,k)})$, 
$\kappa(\bs_{r}') = \kappa(\bp_{(*,k)})$, and 
$\kappa(\bs_{t}') = \iota(\bs_{t+1}')$ for $0 \le t \le r-1$; 

\item[(2)'] $\zeta_{1}(\bp)$ is the directed path obtained by removing 
$(\bs_{1}' \cup \cdots \cup \bs_{r}') \setminus \{\iota(\bs_{1}')\}$ from $\bp_{(*,k)}$, and then adding 
$\bs_{t}$ to the end of $\bp_{(*,d_{t})}$ in $\bp$ for $1 \le t \le r$, 
where $\bs_{t}$ is defined by replacing $(i,k)$ in $\bs_{t}'$ with $(i,d_{t})$ 
for $1 \le t \le r$. 
\end{itemize}
We set $K_{1}:= (\bs_{1}' \cup \cdots \cup \bs_{r}') \cap M$; 
note that for each $(i,k) \in K_{1}$, 
there exists a unique $1 \le t_{i} \le r$ such that $(i,k) \in \bs_{t_{i}}'$ 
and $(i,k) \ne \kappa(\bs_{t_{i}}')$. We set $K_{2}:=\bigl\{ (i,d_{t_i}) \mid 
(i,k) \in K_{1} \bigr\}$, and
\begin{equation*}
M_{\bq}:=
 \begin{cases}
 (M \setminus K_{1}) \sqcup (K_{2} \setminus \{\kappa(\bs_{r})\})
 & \text{if $\g = p$}, \\[1mm]
 (M \setminus K_{1}) \sqcup K_{2}
 & \text{if $\g = p-1$}. 
\end{cases}
\end{equation*}
We deduce that if $\g = p$, then 
$\chi_{2}(\bq):=( (\bp \leftarrow \bm,M_{\bq}) \mid \emptyset) \in (\hspmw{k-1}{p-1})_{\RF_{2}^{2}}$ and 
$\bF{k-1}{p-1}{\chi_{2}(\bq)} = - \bF{k-1}{p}{\bq}$, and that 
if $\g = p-1$, then 
$\chi_{6}(\bq):=( (\bp \leftarrow \bm,M_{\bq}) \mid \emptyset) \in (\hspmw{k-1}{p-1})_{\RF_{2}^{1}}$ and 
$\bF{k-1}{p-1}{\chi_{6}(\bq)} = \bF{k-1}{p-1}{\bq}$. 


Let us show the bijectivity of the maps $\chi_{2}$ and $\chi_{6}$ 
by giving their inverses. First, let $\bq=(\bp,M) \in 
(\hspw{k}{p})_{\RS_{1}^{\tb}} \sqcup (\hspw{k}{p})_{\RS_{1}^{\ta}}$. 
Recall from Definition~\ref{dfn:kap2} the definitions of $\jp(\bp)$ and 
$f'_{\jp}=f'_{\jp}(\bp)$ for $0 \le \jp \le \jp(\bp)$; observe that 
$f'_{0} < f'_{1} < \cdots < f'_{\jp(\bp)}$. Also, let 
\begin{equation*}
\{d_{u} > \cdots > d_{1} \} = \{ d \ge k+1 \mid (k,d) \in \bp \}; 
\end{equation*}
notice that $d_{u}=b(\bp)=f_{0}'$. 
We set $r:=u+\jp(\bp)$, and $d_{u+\jp}:=f_{\jp}'$ for $0 \le \jp \le \jp(\bp)$. 
Then we define
\begin{equation*}
\begin{split}
\xi(\bq) & :=( \cdots ((\bp \rightarrow (k,d_{1})) \rightarrow (k,d_{2})) 
 \rightarrow \cdots \rightarrow (k,d_{r}) ), \\
\mu(\bq) & :=(\ed(\bp)\,;\,(k,d_{r}),\dots,(k,d_{1})).
\end{split}
\end{equation*}
For each label $(i,k)$ in the $(*,k)$-segment $\xi(\bq)_{(*,k)}$ of $\xi(\bq)$, 
there exists a unique $d(i) \in \{d_{s} \mid u \le s \le r \}$ 
satisfying the conditions that $(i,d(i)) \in \bp$ and that 
$(i,d(i)) \ne \kappa(\bp_{(*,d(i))})$ if $(i,k) \ne \kappa(\xi(\bq))$. 
We set $K_{2}':=M \cap \bigl\{(i,d(i)) \mid (i,k) \in \xi(\bq) \bigr\}$, 
$K_{1}':=\bigl\{ (i,k) \in \xi(\bq) \mid (i,d(i)) \in K_{2}' \bigr\}$, 
and then define 
\begin{equation*}
M^{\bq}:= 
 \begin{cases}
 (M \setminus ( K_{2}' \sqcup \{ (k,d_{u}) \})) \sqcup (K_{1}' \sqcup \{\kappa(\xi(\bq))\})
 & \text{if $(\bp,M) \in \hsp{k}{p}_{\RS_{1}^{\tb}}$}, \\[2mm]
 (M \setminus ( K_{2}' \sqcup \{ (k,d_{u}) \})) \sqcup K_{1}'
 & \text{if $(\bp,M) \in \hsp{k}{p}_{\RS_{1}^{\ta}}$}. 
 \end{cases} 
\end{equation*}
If $(\bp,M) \in \hsp{k}{p}_{\RS_{1}^{\tb}}$, then 
we set $\chi_{2}'(\bq):=((\xi(\bq),M^{\bq}) \mid \mu(\bq))$; 
we see that $\chi_{2}'(\bq) \in (\hspmw{k-1}{p})_{\RE}$.
Similarly, if $(\bp,M) \in \hsp{k}{p}_{\RS_{1}^{\ta}}$, then 
we set $\chi_{6}'(\bq):=((\xi(\bq),M^{\bq}) \mid \mu(\bq))$; 
we see that $\chi_{6}'(\bq) \in (\hspmw{k-1}{p-1})_{\RE}$.

Next, let $\bq=((\bp,M) \mid \emptyset) \in 
(\hspmw{k-1}{p-1})_{\RF_{2}^{2}} \sqcup (\hspmw{k-1}{p-1})_{\RF_{2}^{1}}$. 
Recall from Definition~\ref{dfn:kap1} the definitions of $\ip(\bp)$ and 
$f_{\ip}=f_{\ip}(\bp)$ for $0 \le \ip \le \ip(\bp)$; observe that 
$k=f_{0} < f_{1} < \cdots < f_{\ip(\bp)}$. 
We set $r:=\ip(\bp)$, and $d_{s}:=f_{s}$ for $0 \le s \le r=\ip(\bp)$. 
Then we define
\begin{equation*}
\begin{split}
\xi(\bq) & :=( \cdots ((\bp \rightarrow (k,d_{1})) \rightarrow (k,d_{2})) 
 \rightarrow \cdots \rightarrow (k,d_{r}) ), \\
\mu(\bq) & :=(\ed(\bp)\,;\,(k,d_{r}),\dots,(k,d_{1})).
\end{split}
\end{equation*}
For each label $(i,k)$ in the $(*,k)$-segment $\xi(\bq)_{(*,k)}$ of $\xi(\bq)$, 
there exists a unique $d(i) \in \{d_{s} \mid 0 \le s \le r \}$ 
satisfying the conditions that $(i,d(i)) \in \bp$ and that 
$(i,d(i)) \ne \kappa(\bp_{(*,d(i))})$ if $(i,k) \ne \kappa(\xi(\bq))$. 
We set $K_{2}':=M \cap \bigl\{(i,d(i)) \mid (i,k) \in \xi(\bq), (i,k) \notin \bp \bigr\}$, 
$K_{1}':=\bigl\{ (i,k) \in \xi(\bq) \mid (i,d(i)) \in K_{2}' \bigr\}$, 
and then define 
\begin{equation*}
M^{\bq}:= 
 \begin{cases}
 (M \setminus ( K_{2}' \sqcup \{ (k,d_{u}) \})) \sqcup (K_{1}' \sqcup \{\kappa(\xi(\bq))\})
 & \text{if $\bq = ((\bp,M) \mid \emptyset) \in (\hspmw{k-1}{p-1})_{\RF_{2}^{2}}$}, \\[2mm]
 (M \setminus ( K_{2}' \sqcup \{ (k,d_{u}) \})) \sqcup K_{1}'
 & \text{if $\bq = ((\bp,M) \mid \emptyset) \in (\hspmw{k-1}{p-1})_{\RF_{2}^{1}}$}. 
 \end{cases} 
\end{equation*}
If $\bq=((\bp,M) \mid \emptyset) \in (\hspmw{k-1}{p-1})_{\RF_{2}^{2}}$, then 
we set $\chi_{2}'(\bq):=((\xi(\bq),M^{\bq}) \mid \mu(\bq))$; 
we see that $\chi_{2}'(\bq) \in (\hspmw{k-1}{p})_{\RE}$.
Similarly, if $\bq=((\bp,M) \mid \emptyset) \in (\hspmw{k-1}{p-1})_{\RF_{2}^{1}}$, then 
we set $\chi_{6}'(\bq):=((\xi(\bq),M^{\bq}) \mid \mu(\bq))$; 
we see that $\chi_{6}'(\bq) \in (\hspmw{k-1}{p-1})_{\RE}$.
Hence we obtain the maps $\chi_{2}'$ and $\chi_{6}'$, 
which are the inverses of the maps $\chi_{2}$ and $\chi_{6}$, 
respectively. This proves parts (2) and (6). 

\subsection{Proof of (3).}
\label{subsec:prfmat3-3}

For $\bq=((\bp,M) \mid \emptyset) \in (\hspmw{k-1}{p})_{\RA_1\emptyset}$, 
we set $\chi_{3}(\bq) := (\bp,M)$. It is easily seen that 
$\chi_{3}(\bq) \in \hsp{k}{p}_{\RR}$, and $\bE{k}{p}{\chi_{3}(\bq)} = \bF{k-1}{p}{\bq}$. 
Also, we deduce that the map $\chi_{3}$ is bijective. This proves part (3).

\subsection{Proof of (4).}
\label{subsec:prfmat3-4}

For $\bq=((\bp,M) \mid \emptyset) \in (\hspmw{k-1}{p})_{\RG}$, 
we set $\chi_{4}(\bq) := ((\bp,M \setminus \{\kappa(\bp)\}) \mid \emptyset)$. 
It is easily seen that 
$\chi_{4}(\bq) \in (\hspmw{k-1}{p-1})_{\RF_1}$, and $\bF{k-1}{p-1}{\chi_{4}(\bq)} = - \bF{k-1}{p}{\bq}$. 
Also, we deduce that the map $\chi_{4}$ is bijective. This proves part (4).

\appendix
%
%
\section{Some lemmas on directed paths in the quantum Bruhat graph.}
\label{sec:lemdp}
The following lemma is used in the proof of Claim~\ref{c472} 
(in order to obtain the inequality $i_{s} < j_{t(\bp)}$).
%
%
\begin{lem}[{cf. \cite[Lemma~2.9]{LS}}] \label{lem:LS29} \mbox{}
\begin{enu}
\item There does not exist a directed path of the form\,{\rm:} 
\begin{equation} \label{eq:LS29-1a}
(v\,;\,(j,m),(i,m),(i,l))
\end{equation}
in $\QBG(S_{\infty})$ for any $v \in S_{\infty}$ and $1 \le i < j < l < m$. 

\item For all $w \in S_{\infty}$ and $1 \le i < j \le k < l < m$, 
no element $\bp \in \SP^{k}(w)$ has 
a segment of the form $(j,m),\dots,(i,m),\dots,(i,l)$. 
\end{enu}
\end{lem}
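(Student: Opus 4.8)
First I would establish part (1) directly. Suppose, for a contradiction, that $(v\,;\,(j,m),(i,m),(i,l))$ is a directed path in $\QBG(S_{\infty})$ for some $v$ and $1 \le i < j < l < m$. Set $v_1 := v\cdot(j,m)$ and $v_2 := v_1\cdot(i,m)$. I would analyze each of the three edges using Lemma~\ref{lem:edge}, keeping careful track of the values $v(i), v(j), v(l), v(m)$ and how they are permuted along the path. The edge $v \edge{(j,m)} v_1$ swaps the entries in positions $j$ and $m$; the edge $v_1 \edge{(i,m)} v_2$ then swaps positions $i$ and $m$ of $v_1$, so $v_2(i) = v_1(m) = v(j)$, $v_2(j) = v_1(j) = v(m)$, and $v_2(m) = v_1(i) = v(i)$; finally the edge $v_2 \edge{(i,l)} v_2\cdot(i,l)$ swaps positions $i$ and $l$. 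The point is that position $j$ lies strictly between positions $i$ and $m$, and also position $j$ lies strictly between $i$ and $l$ in the last edge (since $i<j<l$), so the ``betweenness'' conditions (B) and (Q) of Lemma~\ref{lem:edge} impose constraints on $v_2(j) = v(m)$ relative to $v_2(i)$, $v_2(l)$. I expect that combining the constraint coming from the middle edge (which forces an inequality between $v(i)$, $v(j)$, $v(m)$ with $v(j)$ ``in between'' or ``outside'' depending on B vs.\ Q) with the constraint from the third edge yields a contradiction in all four combinations of edge types; this is the routine case-check of \cite[Lemma~2.9]{LS}, which I would simply adapt.

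For part (2), I would argue by contradiction: suppose some $\bp \in \SP^k(w)$ has a segment of the form $(j,m),\dots,(i,m),\dots,(i,l)$ with $1 \le i < j \le k < l < m$. By condition (P0) all labels in $\bp$ are distinct and lie in $\SL_k$; by (P1) the second coordinates are weakly decreasing along $\bp$. Since $m > l$ and the labels $(j,m)$, $(i,m)$, $(i,l)$ appear in that order, (P1) forces every label strictly between $(j,m)$ and $(i,m)$ to have second coordinate $m$, and every label strictly between $(i,m)$ and $(i,l)$ to have second coordinate in $\{l,l+1,\dots,m\}$. The plan is to use the commutation moves of Lemma~\ref{lem:int} to slide labels past one another and extract the forbidden sub-path $(v\,;\,(j,m),(i,m),(i,l))$ of part (1). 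Concretely: among the labels with second coordinate $m$ lying between $(j,m)$ and $(i,m)$, I would use Lemma~\ref{lem:int}(2),(3) (the cases governing two labels sharing a common coordinate, all indices comparable to $k < m$) to commute $(i,m)$ leftward until it is immediately after $(j,m)$; similarly, among the labels with second coordinate in $[l,m]$ between $(i,m)$ and $(i,l)$, I would use Lemma~\ref{lem:int}(1)--(3) to commute $(i,l)$ leftward until it is immediately after $(i,m)$, using distinctness of labels from (P0) to guarantee the hypotheses $\{a,b\}\cap\{c,d\}=\emptyset$ or the shared-coordinate patterns apply. The result is a directed path having $(v\,;\,(j,m),(i,m),(i,l))$ as a consecutive sub-path for some $v \in S_\infty$, contradicting part (1).

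The main obstacle I anticipate is the bookkeeping in the commutation step of part (2): when I slide $(i,m)$ and then $(i,l)$ leftward, the intermediate labels may themselves need to be rewritten (e.g.\ a label $(c,m)$ with $c>i$ might turn into $(c,k')$-type labels, or Lemma~\ref{lem:int}(2),(3) replaces one pair by another), and I must check that no such rewriting destroys the pattern I am trying to isolate or violates the index inequalities $i<j\le k<l<m$ needed to apply part~(1). In particular I need to confirm that after all commutations the labels flanking the extracted triple are irrelevant (they get absorbed into the ``$v$'' and the tail), and that each application of Lemma~\ref{lem:int} is legitimate — this likely requires noting that all the indices involved in the sliding are $\le k$ in the first coordinate and $\ge l > k$ in the second, so the relevant transpositions pairwise satisfy one of the four configurations in Lemma~\ref{lem:int}. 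Once that is in place, the reduction to part~(1) is immediate. I would also remark that this is precisely the infinite-symmetric-group analogue of \cite[Lemma~2.9]{LS}, so the same argument applies \emph{mutatis mutandis}, and cite that as the template.
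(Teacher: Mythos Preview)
Your approach matches the paper's: part~(1) is proved by a direct case analysis on the Bruhat/quantum type of each edge via Lemma~\ref{lem:edge} (note there are $2^3=8$ cases, not four, though several collapse immediately), and for part~(2) the paper, like you, simply invokes the argument of \cite[Lemma~2.9]{LS}. Your anticipated obstacle---that the commutation moves of Lemma~\ref{lem:int} can \emph{rewrite} labels rather than merely slide them, so one cannot literally ``commute $(i,m)$ leftward'' unchanged---is real and is exactly why the reduction in \cite{LS} proceeds by a shortest-counterexample argument rather than a naive sliding; since both you and the paper defer to that source, this is not a gap in your plan.
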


\begin{proof}
(1) Suppose, for a contradiction, that 
there exists a directed path of the form \eqref{eq:LS29-1a}.
%
%
In what follows, we use Lemma~\ref{lem:edge} frequently without mentioning it; 
note that 
$(v \cdot (j,m))(i) = v(i)$, $(v \cdot (j,m))(m) = v(j)$, 
$(v \cdot (j,m))(j) = v(m)$, $(v \cdot (j,m))(l) = v(l)$, and that 
$(v \cdot (j,m)(i,m))(i) = v(j)$, $(v \cdot (j,m)(i,m))(l) = v(l)$, 
$(v \cdot (j,m)(i,m))(j) = v(m)$.

\paragraph{\bf Case 1.} 
Assume that the edge corresponding to $(j,m)$ is a Bruhat edge; 
in this case, we have
\begin{equation} \label{eq:c1}
v(j) < v(m), \qquad v(l) \not\in [v(j),v(m)].
\end{equation}

\paragraph{\bf Subcase 1.1.} 
Assume that the edge corresponding to $(i,m)$ is a Bruhat edge; 
in this case, we have
\begin{equation} \label{eq:c11}
v(i) < v(j), \qquad 
v(m), v(l) \not\in [v(i),v(j)].
\end{equation}
Combining \eqref{eq:c1} and \eqref{eq:c11}, we see that
$v(i) < v(j) < v(m)$, and that either $v(l) < v(i)$ or $v(m) < v(l)$ holds. 

\paragraph{\bf Subsubcase 1.1.1.} 
Assume that the edge corresponding to $(i,l)$ is a Bruhat edge; 
in this case, we have
\begin{equation} \label{eq:c111}
v(j) < v(l), \qquad 
v(m) \not\in [v(j),v(l)].
\end{equation}
Then we obtain $v(i) < v(j) < v(m) < v(l)$, 
which contradicts $v(m) \not\in [v(j),v(l)]$. 

\paragraph{\bf Subsubcase 1.1.2.} 
Assume that the edge corresponding to $(i,l)$ is a quantum edge; 
in this case, we have
\begin{equation} \label{eq:c112}
v(j) > v(l), \qquad 
v(m) \in [v(l),v(j)].
\end{equation}
Then we obtain $v(l) < v(i) < v(j) < v(m)$, 
which contradicts $v(m) \in [v(l),v(j)]$.

\paragraph{\bf Subcase 1.2.} 
Assume that the edge corresponding to $(i,m)$ is a quantum edge; 
in this case, we have
\begin{equation} \label{eq:c12}
v(i) > v(j), \qquad 
v(m), v(l) \in [v(j),v(i)].
\end{equation}
Combining \eqref{eq:c1} and \eqref{eq:c12}, we see that
$v(j) < v(m) < v(l) < v(i)$. 

\paragraph{\bf Subsubcase 1.2.1.} 
Assume that the edge corresponding to $(i,l)$ is a Bruhat edge. 
In this case, \eqref{eq:c111} holds, 
which contradicts $v(j) < v(m) < v(l) < v(i)$. 

\paragraph{\bf Subsubcase 1.2.2.} 
Assume that the edge corresponding to $(i,l)$ is a quantum edge. 
In this case, \eqref{eq:c112} holds, 
which contradicts $v(j) < v(m) < v(l) < v(i)$.

\paragraph{\bf Case 2.} 
Assume that the edge corresponding to $(j,m)$ is a quantum edge; 
in this case, we have
\begin{equation} \label{eq:c2}
v(j) > v(m), \qquad v(l) \in [v(m),v(j)].
\end{equation}

\paragraph{\bf Subcase 2.1.} 
Assume that the edge corresponding to $(i,m)$ is a Bruhat edge; 
in this case, \eqref{eq:c11} holds. 
Combining \eqref{eq:c2} and \eqref{eq:c11}, we see that
$v(m) < v(l) < v(i) < v(j)$. 

\paragraph{\bf Subsubcase 2.1.1.} 
Assume that the edge corresponding to $(i,l)$ is a Bruhat edge. 
In this case, \eqref{eq:c111} holds, 
which contradicts $v(m) < v(l) < v(i) < v(j)$. 

\paragraph{\bf Subsubcase 2.1.2.} 
Assume that the edge corresponding to $(i,l)$ is a quantum edge. 
In this case, \eqref{eq:c112} holds, 
which contradicts $v(m) < v(l) < v(i) < v(j)$.

\paragraph{\bf Subcase 2.2.} 
Assume that the edge corresponding to $(i,m)$ is a quantum edge; 
in this case, \eqref{eq:c12} holds. 
Combining \eqref{eq:c2} and \eqref{eq:c12}, we see that
$v(m) < v(j) < v(i)$, which contradicts 
$v(m) \in [v(j),v(i)]$. 

This proves part (1). 

(2) By using part (1), we can prove part (2) 
by exactly the same argument as for \cite[Lemma~2.9]{LS}. 
This completes the proof of the lemma. 
\end{proof}
The following lemma is used in the proof of Claim~\ref{c471} 
(in order to obtain the equality $u=t$).
%
%
\begin{lem} \label{lem:LS29a}
There does not exist a directed path of the form\,{\rm:} 
\begin{equation} \label{eq:LS29a-1a}
(v\,;\,(i,l),(i,m),(j,m))
\end{equation}
in $\QBG(S_{\infty})$ for any $v \in S_{\infty}$ 
and $1 \le i < j < l < m$. 
\end{lem}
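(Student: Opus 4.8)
The plan is to prove this by the same kind of exhaustive case analysis used in Lemma~\ref{lem:LS29}(1), tracking the values $v(i), v(j), v(l), v(m)$ under the successive transpositions and using Lemma~\ref{lem:edge} to read off the inequalities forced by each edge being Bruhat or quantum. First I would record the effect of the transpositions on the relevant positions: after applying $(i,l)$ to $v$, the values at positions $i,j,l,m$ are $v(l),v(j),v(i),v(m)$; after then applying $(i,m)$ they become $v(m),v(j),v(i),v(l)$; and the positions $i,j,m$ are the ones acted on by the final transposition $(j,m)$. Since the intermediate positions $j$ (for the edges $(i,l)$ and $(i,m)$) and $l$ (for the edge $(j,m)$) matter for the "in-between" conditions in Lemma~\ref{lem:edge}, I would keep careful track of where $v(j)$ sits relative to the other three values.

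Then I would split into $2^3 = 8$ cases according to whether each of the three edges is a Bruhat (B) or quantum (Q) edge, exactly paralleling the proof of Lemma~\ref{lem:LS29}(1). For the first edge $(i,l)$: the B case gives $v(i) < v(l)$ with $v(j) \notin [v(i),v(l)]$, while the Q case gives $v(i) > v(l)$ with $v(j) \in [v(l),v(i)]$. For the second edge $(i,m)$, applied to $v\cdot(i,l)$, the B case forces $v(l) < v(m)$ (recall position $i$ now holds $v(l)$ and position $m$ holds $v(m)$) with $v(j) \notin [v(l),v(m)]$, and the Q case forces $v(l) > v(m)$ with $v(j) \in [v(m),v(l)]$. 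For the third edge $(j,m)$, applied to $v\cdot(i,l)(i,m)$, position $j$ holds $v(j)$ and position $m$ holds $v(l)$, so the B case forces $v(j) < v(l)$ with $v(i) \notin [v(j),v(l)]$ (position $l$ now holds $v(i)$), and the Q case forces $v(j) > v(l)$ with $v(i) \in [v(l),v(j)]$. In each of the eight combinations I would combine these inequalities and derive a contradiction, either from an incompatible chain of strict inequalities among $v(i),v(j),v(l),v(m)$ or from violating one of the "not in the interval" / "in the interval" conditions.

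I expect the main obstacle to be purely bookkeeping: making sure that in each of the eight cases I have correctly identified which value occupies the intermediate position $j$ (resp.\ $l$) at the moment each edge is traversed, since an error there would invalidate the interval conditions. A useful sanity check will be that in every case the constraints on $v(l)$ versus $v(m)$ from the middle edge, together with those on $v(i)$ versus $v(l)$ from the outer edges, over-determine the relative order of all four values; since this is a directed path of length $3$ on $4$ positions with a cyclic-type structure, one anticipates that no consistent linear order exists, which is precisely what each case will confirm. I would then simply note that this completes the proof, writing \begin{proof} ... \end{proof} with the eight cases organized as in Lemma~\ref{lem:LS29}(1) (Case 1 / Case 2 on the first edge, Subcases on the second, Subsubcases on the third).
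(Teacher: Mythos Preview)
Your proposed case analysis is correct in principle and would succeed (with the minor addition that for the middle edge $(i,m)$ you should also record the interval condition on $v(i)$, which sits at position $l$ after the first transposition; you will want this in some subcases). However, the paper takes a much shorter route: it observes that left multiplication by the longest element $w_\circ$ of a sufficiently large $S_n$ reverses every directed path in $\QBG(S_\infty)$, sending a Bruhat edge to a Bruhat edge and a quantum edge to a quantum edge in the opposite direction. Applying this to a hypothetical path $(v;(i,l),(i,m),(j,m))$ yields a path $(w_\circ\cdot\ed(\bp);(j,m),(i,m),(i,l))$, which is exactly the configuration forbidden by Lemma~\ref{lem:LS29}(1). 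Thus the paper reduces Lemma~\ref{lem:LS29a} to Lemma~\ref{lem:LS29} in two lines, avoiding the eightfold case split entirely. Your approach has the virtue of being self-contained and not invoking the $w_\circ$-symmetry, but it duplicates work already done in the proof of Lemma~\ref{lem:LS29}(1); the paper's argument explains \emph{why} the two lemmas are really the same statement.
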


\begin{proof}
Suppose, for a contradiction, that 
there exists a directed path of the form \eqref{eq:LS29a-1a}. 
Let $n \in \BZ_{\ge 1}$ be such that $n > m$ and $v \in S_{n}$, 
and let $\lng \in S_{n}$ be the longest element. 
Then, by multiplying the directed path $\bp$ 
by $w_{\circ}$ on the left, we obtain a directed path 
\begin{equation*}
(\lng \ed(\bp)\,;\,(j,m),(i,m),(i,l)),
\end{equation*}
which contradicts Lemma~\ref{lem:LS29}.
This proves the lemma. 
\end{proof}

The following lemma is used in Section~\ref{subsec:prfmat2-4} 
(in order to obtain the inequality $j_{t(\bp)} > c_{1}$).
%
%
\begin{lem} \label{lem:LS29b}
There does not exist a directed path of the form\,{\rm:} 
\begin{equation} \label{eq:LS29b}
\bigl( v\,;\,(a,k-1),(b_{1},k-1),\dots,(b_{s},k-1),(a_{1},k),\dots,(a_{t},k),(a,k),(b,k) \bigr)
\end{equation}
in $\QBG(S_{\infty})$ for any $v \in S_{\infty}$, $s,t \ge 0$, $1 \le a < b \le k-1$, and 
$1 \le a_{1},\dots,a_{t},b_{1},\dots,b_{s} \le k-1$
such that 
$a,a_{1},\dots,a_{t},b_{1},\dots,b_{s}$ are all distinct, 
and $b \notin \{a_{1},\dots,a_{t}\}$. 
\end{lem}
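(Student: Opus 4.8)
The plan is to argue by contradiction. Suppose a directed path $\bp$ of the form \eqref{eq:LS29b} exists. The idea is to rewrite $\bp$, using only the exchange and commutation moves of Lemma~\ref{lem:int} (which never leave $\QBG(S_\infty)$) together with left multiplication by the longest element, exactly as in the proof of Lemma~\ref{lem:LS29a}, until a three‑label configuration forbidden by Lemma~\ref{lem:LS29}\,(1) is exposed.

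First I would eliminate the ``intermediate'' labels $(b_1,k-1),\dots,(b_s,k-1)$ and $(a_1,k),\dots,(a_t,k)$ by induction on $s+t$. To decrease $t$, I examine the consecutive pair $(a_t,k),(a,k)$: in the main case $a_t<a$, Lemma~\ref{lem:int}\,(2) rewrites it as $(a,k),(a_t,a)$, and then Lemma~\ref{lem:int}\,(1) commutes $(a_t,a)$ past $(b,k)$ — legitimate precisely because $a_t\neq b$, i.e.\ by the hypothesis $b\notin\{a_1,\dots,a_t\}$, together with $a,a_t<b<k$ — so that, after deleting the trailing label, one obtains a directed path of the same shape \eqref{eq:LS29b} with $t$ replaced by $t-1$, to which the induction hypothesis applies. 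The case $a_t>a$ is treated similarly after a preliminary exchange by Lemma~\ref{lem:int}\,(2), using also Lemma~\ref{lem:int}\,(4). A symmetric argument, operating on the pair $(a,k-1),(b_1,k-1)$ at the front and using the distinctness of the first coordinates $b_1,\dots,b_s$, removes the labels $(\ast,k-1)$ one at a time. This reduces the problem to the base case $s=t=0$, namely to the three‑label path $(v\,;\,(a,k-1),(a,k),(b,k))$.

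For the base case I would multiply this path on the left by the longest element $\lng$ of $S_n$ with $n$ chosen large, as in the proof of Lemma~\ref{lem:LS29a}; this reverses the path and preserves edge types, producing the directed path $(\lng\,\ed(\bp)\,;\,(b,k),(a,k),(a,k-1))$. Since $a<b<k-1<k$, this is exactly a path of the form $(u\,;\,(j,m),(i,m),(i,l))$ with $i<j<l<m$, which does not exist by Lemma~\ref{lem:LS29}\,(1); this contradiction finishes the proof.

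The hard part is the combinatorial bookkeeping in the reduction step. Each exchange in Lemma~\ref{lem:int} branches into sub‑cases — according to the relative order of the relevant first coordinates, and, when Lemma~\ref{lem:int}\,(4) is used, according to which of its two alternatives holds — and after each move one must check that the path still has the shape \eqref{eq:LS29b}, with the distinguished labels $(a,k-1)$, $(a,k)$, $(b,k)$ and the hypotheses on the $a_j$'s and $b_i$'s intact, so that the induction hypothesis genuinely applies. The delicate point is to organize the moves so that they always ``progress'', never reintroducing a label that has just been removed; this is where I expect to need auxiliary appeals to Lemma~\ref{lem:LS29}\,(2) and Lemma~\ref{lem:LS29a} to discard the branches that would otherwise loop back.
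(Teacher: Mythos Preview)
Your overall plan --- peel off the intermediate labels and reduce to the three-label path $(v;\,(a,k-1),(a,k),(b,k))$, which is Lemma~\ref{lem:LS29a} --- is exactly the paper's. But both reduction steps you sketch have gaps. For the $(a_j,k)$'s, the paper does not work with the adjacent pair $(a_t,k),(a,k)$ at all: using only that $a_j\notin\{a,b_1,\dots,b_s\}$, it commutes the entire block $(a_1,k),\dots,(a_t,k)$ leftward past all the $(\cdot,k-1)$ labels via Lemma~\ref{lem:int}(1) and absorbs it into $v$, so minimality of a shortest counterexample forces $t=0$ in one stroke --- no case split on $a_t\lessgtr a$ and no appeal to Lemma~\ref{lem:int}(4). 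Your $a_t>a$ case is only gestured at, and the natural first move $(a_t,k),(a,k)\mapsto(a,a_t),(a_t,k)$ via Lemma~\ref{lem:int}(2) destroys the label $(a,k)$, so it is not clear how you recover the shape~\eqref{eq:LS29b}.

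The removal of the $(b_i,k-1)$'s is \emph{not} symmetric, and this is the real missing idea: nothing in the hypotheses forbids $b\in\{b_1,\dots,b_s\}$, and when some $b_i=b$ the transpositions $(b_i,k-1)$ and $(b,k)$ share a letter, so Lemma~\ref{lem:int}(1) cannot separate them. The paper therefore splits on this. If $b\notin\{b_1,\dots,b_s\}$, then $(a,k)$ and $(b,k)$ are disjoint from every $(b_i,k-1)$, so one commutes the pair $(a,k),(b,k)$ leftward to sit immediately after $(a,k-1)$ and invokes Lemma~\ref{lem:LS29a}. If $b\in\{b_1,\dots,b_s\}$, the same commutation argument plus minimality forces $b_s=b$, and then the chain
\[
(b,k-1),(a,k),(b,k)\ \longrightarrow\ (a,k),(b,k-1),(b,k)\ \longrightarrow\ (a,k),(k-1,k),(b,k-1)
\]
via Lemma~\ref{lem:int}(1) and~(3) yields, after dropping the trailing label, a strictly shorter path of the form~\eqref{eq:LS29b} with $b$ replaced by $k-1$, contradicting minimality. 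Your ``symmetric argument'' does not identify this obstruction. (A smaller point: in your base case you assert $a<b<k-1$, but the hypothesis only gives $b\le k-1$.)
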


\begin{proof}
Suppose, for a contradiction, that 
there exists a directed path of the form \eqref{eq:LS29b}; 
we take a shortest one, say $\bp$, among them. 
By Lemma~\ref{lem:LS29a}, we have $s+t \ge 1$. 
Also, by Lemma~\ref{lem:int}\,(1), we see that 
\begin{equation*}
\bigl( v'\,;\,(a,k-1),(b_{1},k-1),\dots,(b_{s},k-1),(a,k),(b,k) \bigr), 
\end{equation*}
with $v'=v \cdot (a_{1},k) \cdots (a_{t},k)$, is a directed path. 
Hence we deduce that $t=0$ (and so $s \ge 1$) by the shortestness of $\bp$. 
If $b \notin \{b_{1},\dots,b_{s}\}$, then 
we see by Lemma~\ref{lem:int}\,(1) that 
\begin{equation*}
\bigl( v\,;\,(a,k-1),(a,k),(b,k),(b_{1},k-1),\dots,(b_{s},k-1) \bigr)
\end{equation*}
is a directed path, and hence so is $(v\,;\,(a,k-1),(a,k),(b,k))$. 
However, this contradicts Lemma~\ref{lem:LS29a}. 
Therefore, it follows that $b \in \{b_{1},\dots,b_{s}\}$. 
By the same argument as above, we obtain $b_{s}=b$. 
Thus, $\bp$ is of the form: 
\begin{equation*}
\bp = 
\bigl( v\,;\,(a,k-1),(b_{1},k-1),\dots,(b_{s-1},k-1),(b,k-1),(a,k),(b,k) \bigr). 
\end{equation*}
Since $b \ne a$, we see by Lemma~\ref{lem:int}\,(1) that 
\begin{equation*}
\bigl( v\,;\,(a,k-1),(b_{1},k-1),\dots,(b_{s-1},k-1),(a,k),(b,k-1),(b,k) \bigr)
\end{equation*}
is a directed path. Also, we see by Lemma~\ref{lem:int}\,(3) that 
\begin{equation*}
\bigl( v\,;\,(a,k-1),(b_{1},k-1),\dots,(b_{s-1},k-1),(a,k),(k-1,k),(b,k-1) \bigr)
\end{equation*}
is a directed path, and hence so is
\begin{equation*}
\bigl( v\,;\,(a,k-1),(b_{1},k-1),\dots,(b_{s-1},k-1),(a,k),(k-1,k) \bigr). 
\end{equation*}
However, this contradicts the shortestness of $\bp$. 
This proves the lemma. 
\end{proof}

The following lemma is used, for example, 
in Section~\ref{subsec:prfmat2-1} (in order to define the bijection $\theta_{1}$).
%
%
\begin{lem} \label{lem:noak}
Let $k \ge 3$. 
There does not exist a directed path of the form\,{\rm:} 
\begin{equation} \label{eq:noak1}
\bp=(v\,;\,(a,k),(b_{1},k),\dots,(b_{s},k),(a,k))
\end{equation}
in $\QBG(S_{\infty})$ for any $v \in S_{\infty}$, $s \ge 0$, and $1 \le a,b_{1},\dots,b_{s} \le k-2$. 
\end{lem}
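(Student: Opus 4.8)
The plan is to argue by contradiction, taking a shortest directed path $\bp=(v\,;\,(a,k),(b_{1},k),\dots,(b_{s},k),(a,k))$ of the prohibited form, and to track the values $v'(k)$ at each vertex $v'$ along $\bp$, where all the labels involve the index $k$. First I would dispose of the case $s=0$: a path of the form $(v\,;\,(a,k),(a,k))$ is impossible since $n_{(a,k)}$ would be $2$, or more directly, because after applying $(a,k)$ once, reapplying $(a,k)$ reverses the previous step, and one checks a Bruhat edge followed by its reverse cannot both be edges of $\QBG$ (a Bruhat edge raises length by $1$, its reverse would have to be a quantum or Bruhat edge with the opposite effect, which Lemma~\ref{lem:edge} rules out because the inequality conditions on $v(a),v(k)$ are incompatible). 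So $s\ge 1$.

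Next, for the inductive step I would use Lemma~\ref{lem:int} together with Lemma~\ref{lem:LS29a} and Lemma~\ref{lem:LS29b}, exactly as in the proofs of those lemmas. The idea is that among $b_{1},\dots,b_{s}$, since $a,b_{1},\dots,b_{s}$ need not be distinct a priori but all lie in $[1,k-2]$, one can try to commute labels past one another: if $b_{i}\ne a$ for the relevant $i$, then by Lemma~\ref{lem:int}\,(1) the label $(b_{i},k)$ can be moved to the end (past the final $(a,k)$), contradicting the shortestness of $\bp$ unless $b_i = a$ forces a collision. This should reduce to the situation where the path essentially looks like $(v\,;\,(a,k),(a,k),\dots)$ after commutation, which is then handled by the $s=0$ case, or to a configuration matching the hypotheses of Lemma~\ref{lem:LS29b} (with the roles of $(a,k-1)$ replaced appropriately, using that all indices are $\le k-2 < k-1 < k$), which is forbidden. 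A key auxiliary observation is that the endpoint of $\bp$ satisfies $\ed(\bp)(k)=v(k)$ only if the net effect of the transpositions is trivial on the $k$-th coordinate, which constrains the intermediate values.

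The main obstacle will be organizing the case analysis so that every configuration genuinely reduces, via Lemma~\ref{lem:int}, to either a shorter path of the same prohibited form or to one of the already-established impossibilities (Lemmas~\ref{lem:LS29a} and \ref{lem:LS29b}); in particular one must be careful about whether the edges along $\bp$ are Bruhat or quantum edges, since the value $v'(k)$ can jump up or down, and the ordering of $v(a),v(b_1),\dots,v(b_s)$ relative to $v(k)$ determines which applications of Lemma~\ref{lem:int} are available. I expect the cleanest route is: first reduce to all of $b_1,\dots,b_s$ distinct from $a$ and from each other by the commutation/shortestness argument, then observe that $(v\,;\,(a,k),(b_1,k),\dots,(b_s,k),(a,k))$ with all indices $\le k-2$ gives, after one application of Lemma~\ref{lem:int}\,(1) to pull $(a,k)$ leftwards or a suitable $(b_i,k)$, a segment of the shape forbidden by Lemma~\ref{lem:LS29a} or \ref{lem:LS29b} once we reintroduce the index $k-1$ via Lemma~\ref{lem:int}\,(2) or (3) — this last maneuver, mirroring the end of the proof of Lemma~\ref{lem:LS29b}, is where the real content lies.
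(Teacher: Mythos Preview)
Your central commutation step is broken: you propose to move $(b_i,k)$ past the final $(a,k)$ using Lemma~\ref{lem:int}\,(1), but that lemma requires the two transpositions to be disjoint, and $(b_i,k)$ and $(a,k)$ both involve $k$. So none of the labels in $\bp$ can be commuted past one another via part~(1); the whole ``move $(b_i,k)$ to the end'' reduction collapses. The same objection applies to ``pull $(a,k)$ leftwards'' later in your sketch. The appeals to Lemmas~\ref{lem:LS29a} and~\ref{lem:LS29b}, and the idea of ``reintroducing $k-1$'', are not connected to any concrete manipulation and do not match the actual shape of the problem (no $k-1$ appears anywhere in the hypothesis).

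The paper's proof fixes exactly this: it uses Lemma~\ref{lem:int}\,(2), not (1), on an adjacent pair $(b_1,k),(b_2,k)$. Part~(2) handles two transpositions sharing the larger index and rewrites the pair so that one of the new labels is $(b_1,b_2)$ or $(b_2,b_1)$, which \emph{does not involve $k$}. That label can then be commuted past the remaining $(*,k)$ labels via part~(1), producing a strictly shorter path of the same forbidden form. The base cases $s=0$ and $s=1$ are handled directly (again via parts~(2) and~(3) of Lemma~\ref{lem:int}, using $a,b\le k-2$ to rule out the double edge $(a,k),(a,k)$), with no need for Lemmas~\ref{lem:LS29a} or~\ref{lem:LS29b}. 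Your overall strategy (shortest counterexample, then shorten) is right; the missing idea is that the only way to untangle two $(*,k)$ labels is through Lemma~\ref{lem:int}\,(2).
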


\begin{proof}
We prove the assertion of the lemma by induction on $s$. 
Since $1 \le a \le k-2$, 
the assertion is obvious if $s=0$. 
Let us prove the assertion for $s=1$. 
Suppose, for a contradiction, that 
$\bp=(v\,;\,(a,k),(b,k),(a,k))$ is a directed path 
for some $v \in S_{\infty}$ and $1 \le a,b \le k-2$; 
it is obvious that $a \ne b$. If $a > b$, then 
it follows from Lemma~\ref{lem:int}\,(2) that 
$(v\,;\,(b,a),(a,k),(a,k))$ is a directed path, 
which contradicts the assumption that $a \le k-2$. 
If $a < b$, then we see by Lemma~\ref{lem:int}\,(2) that 
$(v\,;\,(b,k),(a,b),(a,k))$ is a directed path. 
Hence it follows from Lemma~\ref{lem:int}\,(3) that 
$(v\,;\,(b,k),(b,k),(a,b))$ is a directed path, 
which contradicts the assumption that $b \le k-2$.
This proves the assertion for $s=1$. 

Let us assume that $s \ge 2$. Suppose, for a contradiction, that 
there exists a directed path $\bp$ of the form \eqref{eq:noak1}, and 
take a shortest one among them; by the shortestness, 
we see that $a, b_{1},\dots,b_{s}$ are all distinct. 
If $b_{1} > b_{2}$, then 
it follows from Lemma~\ref{lem:int}\,(2), 
applied to the segment $(b_{1},k),(b_{2},k)$, that
\begin{equation*}
(v\,;\,(a,k),(b_{2},b_{1}),(b_{1},k),(b_{3},k),\dots,(b_{s},k),(a,k))
\end{equation*}
is a directed path. Since $\{a,k\} \cap \{b_{1},b_{2}\} = \emptyset$, 
we deduce by Lemma~\ref{lem:int}\,(1) that 
\begin{equation*}
(v'\,;\,(a,k),(b_{1},k),(b_{3},k)\dots,(b_{s},k),(a,k)), \qquad 
\text{with $v':=v \cdot (b_{2},b_{1})$}, 
\end{equation*}
is a directed path, which contradicts the shortestness of the directed path $\bp$. 
If $b_{1} < b_{2}$, then we see by Lemma~\ref{lem:int}\,(2) that
\begin{equation*}
(v\,;\,(a,k),(b_{2},k),(b_{1},b_{2}),(b_{3},k),\dots,(b_{s},k),(a,k))
\end{equation*}
is a directed path. Since $a,b_{1},\dots,b_{s},k$ are all distinct, 
we can move $(b_{1},b_{2})$ directly to the right of $(b_{3},k),\dots,(b_{s},k),(a,k)$; 
it follows from Lemma~\ref{lem:int}\,(1) that 
\begin{equation*}
(v\,;\,(a,k),(b_{2},k),(b_{3},k),\dots,(b_{s},k),(a,k),(b_{1},b_{2}))
\end{equation*}
is a directed path. In particular, 
\begin{equation*}
(v\,;\,(a,k),(b_{2},k),(b_{3},k),\dots,(b_{s},k),(a,k))
\end{equation*}
is also a directed path, which contradicts the shortestness of the directed path $\bp$. 
This proves the lemma. 
\end{proof}

The following lemma is used at the end of 
the proof of Lemma~\ref{lem:LS217b} below.
\begin{lem}[{cf. \cite[Lemma~2.17]{LS}}] \label{lem:LS217a}
For any $v \in S_{\infty}$ and $1 \le i < j < k < l < m$, 
there does not exist a directed path of the form\,{\rm:} 
\begin{equation} \label{eq:LS217a}
(v\,;\,(i,m),(j,m),(j,l),(i,k)) 
\end{equation}
in $\QBG(S_{\infty})$.
\end{lem}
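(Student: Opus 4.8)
The plan is to argue by contradiction: assume a directed path $\bp = (v\,;\,(i,m),(j,m),(j,l),(i,k))$ exists in $\QBG(S_{\infty})$ for some $v \in S_{\infty}$ and $1 \le i < j < k < l < m$, and derive a contradiction by tracking the five values $v(i), v(j), v(k), v(l), v(m)$ along $\bp$. Writing $v_{0}=v$, $v_{1}=v_{0}\cdot(i,m)$, $v_{2}=v_{1}\cdot(j,m)$, $v_{3}=v_{2}\cdot(j,l)$, each step only permutes the values at positions $i,j,k,l,m$, so these five numbers are merely reshuffled among these positions as we move along $\bp$. Since $j,k,l$ all lie strictly between $i$ and $m$, since $k,l$ lie strictly between $j$ and $m$, since $k$ lies strictly between $j$ and $l$, and since $j$ lies strictly between $i$ and $k$, Lemma~\ref{lem:edge} translates the requirement that each of the four edges be a Bruhat edge (B) or a quantum edge (Q) into a system of strict inequalities among $v(i), v(j), v(k), v(l), v(m)$, exactly as in the proof of Lemma~\ref{lem:LS29}\,(1).

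First I would bring $\bp$ into a convenient normal form. Commuting the disjoint transpositions $(j,l)$ and $(i,k)$, and then $(j,m)$ and $(i,k)$, via Lemma~\ref{lem:int}\,(1) turns $\bp$ into a directed path $(v\,;\,(i,m),(i,k),(j,m),(j,l))$; applying Lemma~\ref{lem:int}\,(3) to the segment $(i,m),(i,k)$, then Lemma~\ref{lem:int}\,(2) to the resulting segment $(k,m),(j,m)$, and commuting $(k,m)$ past $(j,l)$ once more, yields a directed path $(v\,;\,(i,k),(j,k),(j,l),(k,m))$ with the same endpoint. Either form may be used; I will branch on the type of the first edge. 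If $(i,m)$ is a quantum edge, then $v(i) > v(m)$ and $v(j),v(k),v(l) \in [v(m),v(i)]$, which forces $(j,m)$ to be a Bruhat edge (a quantum $(j,m)$-edge would demand $v_{1}(j)=v(j) > v_{1}(m)=v(i)$, impossible); pushing this kind of bookkeeping one step further shows that each of the two choices for the first edge is compatible with only a few choices for the remaining edges, so the nominal $2^{4}$ possibilities collapse to a short list. In each surviving leaf I would write out the chain of inequalities produced by the four edge conditions and exhibit an explicit contradiction — typically a value forced to lie simultaneously inside and outside a closed interval $[v(a),v(b)]$, just as in Subcases~1.1.1–2.2 of the proof of Lemma~\ref{lem:LS29}\,(1).

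The main obstacle is purely organizational: keeping accurate track, case by case, of how the five values are permuted by the partial products $(i,m)$, $(i,m)(j,m)$, $(i,m)(j,m)(j,l)$, and then reading off the correct interval conditions from Lemma~\ref{lem:edge} at each step. To keep the case list manageable I would also exploit the $w_{\circ}$-reversal used in the proof of Lemma~\ref{lem:LS29a}: multiplying $\bp$ on the left by the longest element $\lng$ of a sufficiently large finite symmetric group and reading the resulting directed path backwards shows that the non-existence of $\bp$ is equivalent to the non-existence of a directed path $(v'\,;\,(i,k),(j,l),(j,m),(i,m))$; this auxiliary symmetry, together with the intrinsic reversal of individual segments used in Algorithm~$(\bs:(k,d))$, lets several leaves be dispatched at once. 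Finally — just as Lemmas~\ref{lem:LS29a}, \ref{lem:LS29b}, \ref{lem:noak} were bootstrapped from Lemma~\ref{lem:LS29} — I expect that some of the leaves can instead be phrased as direct consequences of Lemmas~\ref{lem:LS29}\,(1) and~\ref{lem:LS29a} applied to three-term sub-paths extracted after the rearrangements above, which would further shorten the argument; once every leaf of the case tree ends in a contradiction, the lemma is proved.
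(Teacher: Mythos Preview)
Your core strategy---assume the path exists and branch on whether each edge is Bruhat or quantum, using Lemma~\ref{lem:edge} to convert each choice into strict inequalities among $v(i),v(j),v(k),v(l),v(m)$---is exactly what the paper does. The difference is that the paper works directly with the original path $(v\,;\,(i,m),(j,m),(j,l),(i,k))$: it records the relevant values of $v_{1},v_{2},v_{3}$ once, then runs a short tree (Case~1 first edge Bruhat, Case~2 first edge quantum; in Case~2 the second edge is forced Bruhat and the third forced quantum, and in Subcase~1.2 the third is forced Bruhat), ending in about five one-line contradictions of the form ``$v(l)$ (or $v(m)$) must lie both inside and outside $[v(a),v(b)]$''. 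No normal form, no $w_{\circ}$-reversal, no appeal to Lemmas~\ref{lem:LS29} or~\ref{lem:LS29a}.

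Your rearrangements via Lemma~\ref{lem:int} to reach $(v\,;\,(i,k),(j,k),(j,l),(k,m))$ are valid, but they do not buy anything: the transformed path does not match the pattern of Lemma~\ref{lem:LS29}\,(1) or Lemma~\ref{lem:LS29a} on any three-term sub-segment, so you would still face the same case analysis you were trying to shortcut, and the $w_{\circ}$-reversal only trades one four-edge pattern for another of equal difficulty. The proposal as written is a plan rather than a proof; to finish, drop the preprocessing and simply execute the five-leaf case split on the original path as the paper does.
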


\begin{proof}
Suppose, for a contradiction, that 
there exists a directed path $\bp$ of the form \eqref{eq:LS217a}. 
We write $\bp$ as: 
\begin{equation} \label{eq:LS217a1}
\bp:  v = v_{0} \edge{(i,m)} v_{1} \edge{(j,m)} v_{2} \edge{(j,l)} v_{3} \edge{(i,k)} v_{4}. 
\end{equation}
Observe that 
\begin{align*}
& 
v_{1}(j) = v(j), \quad v_{1}(m) = v(i), \quad 
v_{1}(k) = v(k) , \quad v_{1}(l) = v(l), \\[3mm]
& 
v_{2}(j) = v(i), \quad v_{2}(l) = v(l), \quad v_{2}(k) = v(k), \\[3mm]
& 
v_{3}(i) = v(m), \quad v_{3}(k) = v(k), \quad v_{3}(j) = v(l). 
\end{align*}

\paragraph{\bf Case 1.}
%
Assume that the first edge $v_{0} \edge{(i,m)} v_{1}$ in $\bp$ is a Bruhat edge. 
In this case, we have
\begin{equation} \label{eq:217c1}
v(i) < v(m), \qquad v(j), v(k), v(l) \notin [v(i), v(m)]. 
\end{equation}

\paragraph{\bf Subcase 1.1.}
%
Assume that the second edge $v_{1} \edge{(j,m)} v_{2}$ is a Bruhat edge. 
In this case, we have
\begin{equation} \label{eq:217c11}
v(j) < v(i), \qquad v(k), v(l) \notin [v(j), v(i)]. 
\end{equation}

\paragraph{\bf Subsubcase 1.1.1.}
%
Assume that the third edge $v_{2} \edge{(j,l)} v_{3}$ is a Bruhat edge. 
In this case, we have
\begin{equation} \label{eq:217c111}
v(i) < v(l), \qquad v(k) \notin [v(i), v(l)]. 
\end{equation}
From \eqref{eq:217c1}, \eqref{eq:217c11}, and \eqref{eq:217c111}, 
we deduce that $v(j) < v(i) < v(m) < v(l)$, and that
either $v(k) > v(l)$ or $v(k) < v(j)$ holds. 
If $v(k) > v(l)$, then $v(m) < v(k)$. 
Hence the final edge $v_{3} \edge{(i,k)} v_{4}$ is a Bruhat edge. 
However, since $v(l) \in [v(m),v(k)]$, this is a contradiction. 
If $v(k) < v(j)$, then $v(k) < v(m)$. 
Hence the final edge $v_{3} \edge{(i,k)} v_{4}$ is a quantum edge. 
However, since $v(l) \not\in [v(k),v(m)]$, this is a contradiction. 

\paragraph{\bf Subsubcase 1.1.2.}
%
Assume that the third edge $v_{2} \edge{(j,l)} v_{3}$ is a quantum edge. 
In this case, we have
\begin{equation} \label{eq:217c112}
v(i) > v(l), \qquad v(k) \in [v(l), v(i)]. 
\end{equation}
From \eqref{eq:217c1}, \eqref{eq:217c11}, and \eqref{eq:217c112}, 
we deduce that $v(l) < v(k) < v(j) < v(i) < v(m)$, 
which implies that 
the final edge $v_{3} \edge{(i,k)} v_{4}$ is a quantum edge. 
However, since $v(l) \not\in [v(k),v(m)]$, this is a contradiction.

\paragraph{\bf Subcase 1.2.}
%
Assume that the second edge $v_{1} \edge{(j,m)} v_{2}$ is a quantum edge. 
In this case, we have
\begin{equation} \label{eq:217c12}
v(j) > v(i), \qquad v(k), v(l) \in [v(i), v(j)]. 
\end{equation}
Since $v(i) < v(l)$, it follows that 
the third edge $v_{2} \edge{(j,l)} v_{3}$ is a Bruhat edge. 
Hence \eqref{eq:217c111} holds. 
From \eqref{eq:217c1}, \eqref{eq:217c12}, and \eqref{eq:217c111}, 
we deduce that $v(i) < v(m) < v(l) < v(k) < v(j)$. 
Since $v(m) < v(k)$, the final edge $v_{3} \edge{(i,k)} v_{4}$ is a Bruhat edge. 
However, since $v(l) \in [v(m),v(k)]$, this is a contradiction.

\paragraph{\bf Case 2.}
%
Assume that the first edge $v_{0} \edge{(i,m)} v_{1}$ in $\bp$ is a quantum edge. 
In this case, we have
\begin{equation} \label{eq:217c2}
v(i) > v(m), \qquad v(j), v(k), v(l) \in [v(m), v(i)]. 
\end{equation}
Since $v(j) < v(i)$, the second edge $v_{1} \edge{(j,m)} v_{2}$ is a Bruhat edge, 
and hence \eqref{eq:217c11} holds. 
Since $v(i) > v(l)$, the third edge $v_{2} \edge{(j,l)} v_{3}$ is a quantum edge, 
and hence \eqref{eq:217c112} holds. 
From \eqref{eq:217c2}, \eqref{eq:217c11}, and \eqref{eq:217c112}, 
we deduce that $v(m) < v(l) < v(k) < v(j) < v(i)$. 
Since $v(m) < v(k)$, the final edge $v_{3} \edge{(i,k)} v_{4}$ is a Bruhat edge. 
However, since $v(l) \in [v(m),v(k)]$, this is a contradiction. 
This completes the proof of the lemma. 
\end{proof}

The following lemma is used in the definition of deletion procedure 
in Section~\ref{subsec:deletion} below.
%
%
\begin{lem}[{cf. \cite[Lemma~2.17]{LS}}] \label{lem:LS217b}
For any $w \in S_{\infty}$ and $1 \le i, j < k \le l < m$, 
there does not exist an element $\bp \in \SP^{k-1}(w)$ having a segment $\bs$ of the form\,{\rm:} 
\begin{equation} \label{eq:LS217b}
(i,m),\, \dots, \, (j,m), \, \dots, \, (j,l), \, \dots, \,(i,k) 
\end{equation}
in which any label of the form $(i,d)$, 
with $k \le d \le m$, does not appear between $(i,m)$ and $(i,k)$. 
\end{lem}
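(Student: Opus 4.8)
The plan is to deduce this statement from its ``consecutive'' counterpart, Lemma~\ref{lem:LS217a}, by the same kind of argument used to pass from Lemma~\ref{lem:LS29}\,(1) to Lemma~\ref{lem:LS29}\,(2), and to prove Lemma~\ref{lem:LS29b} and Lemma~\ref{lem:noak}: a shortest-counterexample argument combined with repeated use of the exchange relations in Lemma~\ref{lem:int} (which are statements about directed paths and do not themselves require the Pieri conditions). So I would suppose, for a contradiction, that some $\bp \in \SP^{k-1}(w)$ has a segment $\bs$ of the form \eqref{eq:LS217b} with no label $(i,d)$, $k \le d \le m$, occurring strictly between $(i,m)$ and $(i,k)$, and among all such data (ranging over $w$, over $\bp \in \SP^{k-1}(w)$, and over the choice of the four distinguished labels inside $\bp$) fix one for which the sub-segment of $\bp$ from $(i,m)$ to $(i,k)$ is as short as possible.

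The first step is to exploit condition (P1): since the second coordinates weakly decrease along $\bp$, and hence along $\bs$, every label strictly between $(i,m)$ and $(j,m)$ has second coordinate $m$; every label strictly between $(j,m)$ and $(j,l)$ has second coordinate in $[l,m]$; and every label strictly between $(j,l)$ and $(i,k)$ has second coordinate in $[k,l]$. This locates the four distinguished labels and splits $\bs$ into three ``gap regions'' $G_{1}$ (from $(i,m)$ to $(j,m)$), $G_{2}$ (from $(j,m)$ to $(j,l)$) and $G_{3}$ (from $(j,l)$ to $(i,k)$); by (P0) every label of $\bs$ occurs only once.

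The second step is to empty the gaps one label at a time, working on the directed path underlying $\bs$. A label of $G_{3}$ adjacent to $(i,k)$ whose two entries avoid $\{i,k\}$ is interchanged with $(i,k)$ by Lemma~\ref{lem:int}\,(1); a label sharing an entry with a neighbouring distinguished label is pushed toward the nearest distinguished label by Lemma~\ref{lem:int}\,(2) or (3), or --- when only a dichotomy is available --- by Lemma~\ref{lem:int}\,(4). Each such move is checked to produce either a strictly shorter instance of the same forbidden pattern (still with no $(i,d)$, $k\le d\le m$, between $(i,m)$ and $(i,k)$), contradicting minimality, or a configuration already ruled out by Lemma~\ref{lem:LS217a}, Lemma~\ref{lem:LS29}, Lemma~\ref{lem:LS29a}, Lemma~\ref{lem:LS29b} or Lemma~\ref{lem:noak}; the left-multiplication-by-$\lng$ device used in the proof of Lemma~\ref{lem:LS29a} may be invoked to interchange the roles of two indices where convenient. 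Treating $G_{2}$ and $G_{1}$ in the same way (in $G_{1}$ all labels have second coordinate $m$, so Lemma~\ref{lem:int}\,(2) is used to trade a label $(a,m)$ for $(a,j)$ or $(j,m)$ and then Lemma~\ref{lem:int}\,(1) to remove it), one reduces to the case where all three gaps are empty, i.e. to the consecutive segment $(i,m),(j,m),(j,l),(i,k)$ with $i,j<k\le l<m$. After, if necessary, first disposing of the boundary cases $l=k$ and $i>j$ by the same exchange arguments (or by $\lng$), the remaining case $i<j<k<l<m$ contradicts Lemma~\ref{lem:LS217a}.

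The main obstacle is precisely the gap-emptying case analysis. The labels that share an entry with one of $(i,m),(j,m),(j,l),(i,k)$ cannot be evacuated by a single commutation, and each application of Lemma~\ref{lem:int}\,(2)--(4) changes the labels in play; one therefore has to check, in a lengthy bookkeeping organized by the position of the offending label within $\bs$ and by the relative orders of $i,j$ and of $k,l,m$, that at every step (i) the four-label pattern \eqref{eq:LS217b} is preserved, or else a strictly shorter or already-forbidden configuration appears; (ii) the standing hypothesis that no $(i,d)$ with $k\le d\le m$ lies strictly between $(i,m)$ and $(i,k)$ survives; and (iii) the chosen measure, the length of the sub-segment from $(i,m)$ to $(i,k)$, strictly decreases, so the minimal-counterexample scheme terminates. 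Keeping this within reason hinges on offloading as much of the Bruhat-versus-quantum edge analysis as possible onto Lemma~\ref{lem:LS217a} and the forbidden-pattern lemmas already established, exactly as in the proof of \cite[Lemma~2.17]{LS}.
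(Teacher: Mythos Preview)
Your overall strategy---shortest counterexample plus repeated use of Lemma~\ref{lem:int} to empty the three gaps, then invoke Lemma~\ref{lem:LS217a}---is exactly the paper's approach, and your identification of the gap regions via (P1) is correct. A few organizational points from the paper's execution are worth noting: the paper establishes $i<j$ at the very start (from Lemma~\ref{lem:LS29b}), not as a late boundary case; it deduces $l>k$ from condition (P2) only \emph{after} $G_{3}$ is emptied (so that $(j,l)$ and $(i,k)$ are adjacent); and crucially, its treatment of $G_{3}$ does not proceed by commuting labels past $(i,k)$ one at a time, but rather shows that the labels $(a_{1},k),\dots,(a_{s},k)$ immediately preceding $(i,k)$ must satisfy $a_{1}>a_{2}>\cdots>a_{s}>i$ (else Lemma~\ref{lem:int}\,(2) produces a shorter counterexample), hence each appears only once in $\bp$, and the whole block can be moved to the \emph{beginning} of $\bp$ by Lemma~\ref{lem:int}\,(1), contradicting minimality. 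The paper never needs Lemma~\ref{lem:int}\,(4) or the $\lng$-trick here. Your proposal is really an outline rather than a proof---you acknowledge that the case analysis is ``the main obstacle'' without carrying it out---and the paper's specific reductions above are what make that obstacle tractable.
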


\begin{proof}
Suppose, for a contradiction, that 
for some $w \in S_{\infty}$ and $1 \le i, j < k \le l < m$, 
there exists an element of $\SP^{k-1}(w)$ having a segment of the form 
\eqref{eq:LS217b}; we take a shortest one, say $\bp$, among them. 
By Lemma~\ref{lem:LS29b}, we see that $i < j$; in particular, $i \le k-2$. 
By the shortestness, $\bp$ is identical to $\bs$, that is, 
\begin{equation*}
\bp = (w\,;\,(i,m),\, \dots, \, (j,m), \, \dots, \, (j,l), \, \dots, \,(i,k)). 
\end{equation*}
We write the segment of the directed path $\bp$ between $(j,l)$ and $(i,k)$ as:
\begin{equation*}
(j,l),(b_{1},c_{1}),\dots,(b_{t},c_{t}),(a_{1},k),\dots,(a_{s},k),(i,k), 
\end{equation*}
with $s, t \ge 0$ and $l \ge c_{1} \ge \cdots \ge c_{t} > k$; 
we set $a_{s+1}:=i$. 
Suppose, for a contradiction, that $s \ge 1$. 
If $a_{u} < a_{u+1}$ for some $1 \le u \le s$, 
then we deduce by Lemma~\ref{lem:int}\,(2), applied to the 
segment $(a_{u},k),(a_{u+1},k)$ in $\bp$, that 
\begin{equation*}
\begin{split}
(w\,;\, & (i,m),\, \dots, \, (j,m), \, \dots, \, (j,l), \, \dots, \, \\
 & \dots, (a_{u-1},k), (a_{u+1},k), (a_{u},a_{u+1}), (a_{u+2},k), \dots, (a_{s+1},k))
\end{split}
\end{equation*}
is a directed path. By moving $(a_{u},a_{u+1})$ to the end of this directed path 
(by Lemma~\ref{lem:int}\,(1)) and then removing it, we deduce that 
\begin{equation*}
\begin{split}
(w\,;\, & (i,m),\, \dots, \, (j,m), \, \dots, \, (j,l), \, \dots, \, \\
 & \dots, (a_{u-1},k), (a_{u+1},k), (a_{u+2},k), \dots, (a_{s+1},k))
\end{split}
\end{equation*}
is also a directed path; 
it is easily seen that this directed path is an element of $\SP^{k-1}(w)$, 
which contradicts the shortestness of $\bp$. 
Thus we get $a_{1} > a_{2} > \cdots > a_{s} > a_{s+1}=i$, 
which implies that $n_{(a_{u},*)}(\bp) = 1$ for all $1 \le u \le s$. 
Hence we can move the segment $(a_{1},k),\dots,(a_{s},k)$ to the beginning of $\bp$, 
and obtain an element of $\SP^{k-1}(w')$, with $w':=w \cdot (a_{1},k) \cdots (a_{s},k)$. 
The resulting directed path has a segment of the form \eqref{eq:LS217b}, and is shorter than $\bp$; 
this contradicts the shortestness of $\bp$. Hence we obtain $s=0$, as desired. 
Next, suppose, for a contradiction, that $t \ge 1$. 
By Lemma~\ref{lem:int}\,(1), together with the fact that 
$i \notin \{b_{1},\dots,b_{t}\}$ and $s = 0$, 
we can move the segment $(b_{1},c_{1}),\dots,(b_{t},c_{t})$ to the end of the directed path $\bp$; 
by removing this segment, we obtain a directed path 
which has a segment of the form \eqref{eq:LS217b}, and is shorter than $\bp$. 
This contradicts the shortestness of $\bp$. Hence we obtain $t=0$, as desired. 
Since $s=t=0$, the label $(i,k)$ is next to $(j,l)$. 
By (P2) for $\bp$ and the fact that $i < j$, we deduce that $l > k$. 

By exactly the same argument as above, we find that 
there exists no label between $(j,m)$ and $(j,l)$ in $\bp$; we write $\bp$ as: 
\begin{equation*}
\bp = (w\,;\, (i,m),(d_{1},m),(d_{2},m),\dots,(d_{r},m), (j,m),(j,l),(i,k)), 
\end{equation*}
with $r \ge 0$. Suppose, for a contradiction, that $r \ge 1$. 
If $i > d_{1}$, then we see by Lemma~\ref{lem:int}\,(2) that 
\begin{equation*}
(w'\,;\, (i,m),(d_{2},m),\dots,(d_{r},m), (j,m),(j,l),(i,k))
\end{equation*}
is an element of $\SP^{k-1}(w')$, with $w':=w \cdot (d_{1},i)$. 
This contradicts the shortestness of $\bp$. 
If $i < d_{1}$, then  we see by Lemma~\ref{lem:int}\,(2) that 
\begin{equation*}
(w\,;\, (d_{1},m),(i,d_{1}),(d_{2},m),\dots,(d_{r},m), (j,m),(j,l),(i,k))
\end{equation*}
is a directed path. 
By using Lemma~\ref{lem:int}\,(1) repeatedly, we deduce that 
\begin{equation*}
(w\,;\, (d_{1},m),(d_{2},m),\dots,(d_{r},m), (j,m),(j,l),(i,d_{1}),(i,k))
\end{equation*}
is a directed path. By Lemma~\ref{lem:int}\,(3), 
\begin{equation*}
(w\,;\, (d_{1},m),(d_{2},m),\dots,(d_{r},m), (j,m),(j,l),(d_{1},k),(i,d_{1}))
\end{equation*}
is a directed path, and hence so is
\begin{equation*}
(w\,;\, (d_{1},m),(d_{2},m),\dots,(d_{r},m), (j,m),(j,l),(d_{1},k)); 
\end{equation*}
note that this directed path is an element of $\SP^{k-1}(w)$ 
having a segment of the form \eqref{eq:LS217b}, with $i$ replaced by $d_{1}$. 
This contradicts the shortestness of $\bp$. Therefore, we conclude that $r = 0$, as desired, 
and hence that $\bp$ is of the form: 
\begin{equation*}
\bp = (w\,;\, (i,m),(j,m),(j,l),(i,k)). 
\end{equation*}
However, since $1 \le i < j < k < l < m$, 
this contradicts Lemma~\ref{lem:LS217a}. 
This proves the lemma. 
\end{proof}

%
\section{Insertion and deletion.}
\label{sec:ins}

We explain two procedures, that is, insertion and deletion, 
which are needed in the proof of Proposition~\ref{prop:mat3}.
%
%
\subsection{Insertion.}
\label{subsec:ins}
Let $w \in S_{\infty}$ and $k \ge 1$. 
Let $\bp= (w\,;\,(a_{1},b_{1}),\dots,(a_{r},b_{r}))$ be a directed path 
in $\QBG(S_{\infty})$ starting at $w$ and satisfying the following conditions:
\begin{enu}
\item[(P0)'] $(a_{i},b_{i}) \in \SL_{k-1} \cup \SL_{k}$ for all $1 \le i \le r$, and 
$n_{(a,b)}(\bp) \in \{0,1\}$ for each $(a,b) \in \SL_{k-1} \cup \SL_{k}$. 
Also, if $n_{(k,*)}(\bp) \ge 1$, then $n_{(*,k)}(\bp) = 0$; 
\item[(P1)'] $b_{1} \ge b_{2} \ge \cdots \ge b_{r}$; 
\item[(P2)'] If $r \ge 3$, and if $a_{j}=a_{i}$ 
for some $1 \le j < i \le r-1$, 
then $(a_{i},b_{i}) \prec (a_{i+1},b_{i+1})$. 
\end{enu}

\noindent
We write $\bp$ as:
\begin{equation*}
\bp=(w\,;\,\dots\dots,\bp_{(*,k+2)},\bp_{(*,k+1)},
\underbrace{(i_{1},k),\dots,(i_{s},k)}_{ \text{$=\bp_{(*,k)}$; possibly, $\emptyset$} }). 
\end{equation*}
Assume that $d \ge k+1$ satisfies the following conditions: 
\begin{enu}

\item[(C1)] 
\begin{equation} \label{eq:Ins0}
(w\,;\,\dots\dots,\bp_{(*,k+2)},\bp_{(*,k+1)},
\underbrace{(i_{1},k),\dots,(i_{s},k)}_{ \text{$=\bp_{(*,k)}$; possibly, $\emptyset$} },(k,d))
\end{equation}
is a directed path; 

\item[(C2)] If $n_{(k,*)}(\bp) \ge 1$, then 
$d < \min \{c \ge k+1 \mid (k,c) \in \bp \}$; 

\item[(C3)] If $n_{(k,*)}(\bp) = 0$ and $s \ge 1$, then 
$(i_{s},l) \notin \bp_{(*,l)}$ for any $k+1 \le l \le d$. 

\end{enu}

\noindent
Now, we define a directed path $\bp \leftarrow (k,d)$ as follows. 
Run {\bf Algorithm $(\bp_{(*,k)}:(k,d))$} for the directed path \eqref{eq:Ins0}; 
this algorithm ends with a directed path $\bp_{1}$ either of the form \eqref{eq:Ia} or of the form \eqref{eq:Ib}: 
\begin{equation} \label{eq:Ia}
(w\,;\,\underbrace{\dots\dots,\bp_{(*,k+2)},\bp_{(*,k+1)}}_{\heartsuit},
(k,d), (i_{1},d),\dots,(i_{s},d)), 
\end{equation}
\begin{equation} \label{eq:Ib}
\begin{split}
(w\,;\, & \dots\dots,\bp_{(*,k+2)},\bp_{(*,k+1)}, \\
& (i_{1},k),\dots,(i_{t-1},k),(i_{t},d),(i_{t},k),(i_{t+1},d),\dots,(i_{s},d)) \quad 
\text{for some $1 \le t \le s$}. 
\end{split}
\end{equation}

\paragraph{\bf Case 1.}
%
If $n_{(k,*)}(\bp) \ge 1$, then we see by (P0)' that $s=0$, 
and hence $\bp_{1}$ is of the form \eqref{eq:Ia}. Also, by (C2), we can move 
$(k,d)$ directly to the right of $\bp_{(*,d)}$ in $\bp_{1}$ as follows: 
\begin{equation*}
(w\,;\,\dots\dots,\bp_{(*,d+1)},\bp_{(*,d)},(k,d),\bp_{(*,d-1)},\dots,\bp_{(*,k+1)}).
\end{equation*}
We call the procedure, which assigns $\bp \leftarrow (k,d)$ to $\bp$, an insertion; 
notice that the resulting directed path $\bp \leftarrow (k,d)$ satisfies (P0)', (P1)', (P2)', 
with $n_{(k,*)}(\bp \leftarrow (k,d)) > 1$. 

\paragraph{\bf Case 2.}
%
Assume next that $n_{(k,*)}(\bp) = 0$, and that
$\bp_{1}$ is of the form \eqref{eq:Ia}. 

\begin{clm} \label{c:ins1}
We have $(i_{u},l) \notin \bp_{(*,l)}$ 
for any $1 \le u \le s$ and $k+1 \le l \le d$.
\end{clm}

\noindent
{\it Proof of Claim~\ref{c:ins1}.} Suppose, for a contradiction, that
there exist $1 \le u \le s$ and $k+1 \le l \le d$ such that 
$(i_{u},l) \in \bp_{(*,l)}$; notice that $1 \le u < s$ by condition (C3). 
Let $(a,l)$ be the rightmost label in the segment $\heartsuit$ in $\bp_{1}$ 
(of the form \eqref{eq:Ia}) such that 
$a \in \{i_{1},\dots,i_{s}\}$; note that $k+1 \le l \le d$ by our assumption. 
Let $1 \le u \le s$ be such that $(a,l) = (i_{u},l)$: 
\begin{equation*}
\bp_{1}=
(w\,;\,\dots\dots,(i_{u},l),
 \underbrace{\dots\dots}_{\diamondsuit},
 (k,d), (i_{1},d),\dots,(i_{u},d),(i_{u+1},d),(i_{u+2},d),\dots,(i_{s},d)), 
\end{equation*}
where in the segment $\diamondsuit$, 
a label of the form $(i_{u},m)$ does not exist 
for any $1 \le u \le s$ and $k+1 \le m \le l$. 
By condition (P2)' on $\bp$, we see that $i_{u} < i_{u+1}$. 
Suppose first that $l < d$. By Lemma~\ref{lem:int}\,(1), 
we deduce that 
\begin{equation*}
(w\,;\,\dots\dots,
 (k,d), (i_{1},d),\dots,(i_{u},l),(i_{u},d),(i_{u+1},d),
 \underbrace{\dots\dots}_{\diamondsuit},(i_{u+2},d),\dots,(i_{s},d))
\end{equation*}
is a directed path, which has the segment 
$(i_{u},l),(i_{u},d),(i_{u+1},d)$. 
Since $l < d$ and $i_{u} < i_{u+1}$, this contradicts Lemma~\ref{lem:LS29a}. 
Suppose next that $l = d$. We write $\bp_{(*,d)}$ in $\bp_{1}$ as:
\begin{equation*}
\begin{split}
(w\,;\,& \dots,\overbrace{(a_{1},d),\dots,(a_{t},d),(i_{u},d),
  (b_{1},d),\dots,(b_{q},d)}^{=\,\bp_{(*,d)}},\bp_{(*,d-1)},\dots\dots, \\
& \bp_{(*,k+1)},(k,d), (i_{1},d),\dots,(i_{u},d),(i_{u+1},d),\dots,(i_{s},d)). 
\end{split}
\end{equation*}
By Lemma~\ref{lem:int}\,(1), we see that 
\begin{equation*}
\begin{split}
(w\,;\,&\dots,\overbrace{(a_{1},d),\dots,(a_{t},d),(i_{u},d),
  (b_{1},d),\dots,(b_{q},d)}^{=\,\bp_{(*,d)}}, \\
& (k,d),(i_{1},d),\dots,(i_{u},d),(i_{u+1},d),\dots,(i_{s},d), 
  \bp_{(*,d-1)}, \dots, \bp_{(*,k+1)})
\end{split}
\end{equation*}
is a directed path. 
Hence it follows from Lemma~\ref{lem:int}\,(2) that 
\begin{equation*}
\begin{split}
(w\,;\, & \dots\dots,(a_{1},d),\dots,(a_{t},d),(k,d),(i_{u},k),
  (b_{1},k),\dots,(b_{q},k), \\
& (i_{1},d),\dots,(i_{u},d),(i_{u+1},d),\dots,(i_{s},d),
  \bp_{(*,d-1)}, \dots, \bp_{(*,k+1)})
\end{split}
\end{equation*}
is a directed path. 
Then, by Lemma~\ref{lem:int}\,(1), 
we deduce that 
\begin{equation*}
\begin{split}
(w\,;\, & \dots\dots,(a_{1},d),\dots,(a_{t},d),(k,d),(i_{1},d),\dots,(i_{u-1},d), \\
& (i_{u},k),(i_{u},d),(i_{u+1},d),
  (b_{1},k),\dots,(b_{q},k),(i_{u+2},d),\dots,(i_{s},d),
  \bp_{(*,d-1)}, \dots, \bp_{(*,k+1)})
\end{split}
\end{equation*}
is a directed path, which has the segment 
$(i_{u},k),(i_{u},d),(i_{u+1},d)$. 
Since $k < d$ and $i_{u} < i_{u+1}$, 
this contradicts Lemma~\ref{lem:LS29a}. 
This proves the claim. \bqed

\medskip

By Lemma~\ref{lem:int}\,(1), together with this claim, 
we can move the segment 
$(k,d), (i_{1},d),\dots,(i_{s},d)$ in $\bp_{1}$ (of the form \eqref{eq:Ia}) 
directly to the right of $\bp_{(*,d)}$ as follows: 
\begin{equation*}
\begin{split}
(w\,;\, & \dots\dots,\bp_{(*,d+1)},\\
& \bp_{(*,d)},(k,d), (i_{1},d),\dots,(i_{s},d),
  \bp_{(*,d-1)},\dots,\bp_{(*,k+1)}).
\end{split}
\end{equation*}
We call the procedure, which assigns $\bp \leftarrow (k,d)$ to $\bp$, an insertion; 
notice that the resulting directed path $\bp \leftarrow (k,d)$ satisfies (P0)', (P1)', (P2)', 
with $n_{(k,*)}(\bp \leftarrow (k,d)) = 1$. 
%
%
\begin{ex} \label{ex:ins2}
Assume that $k = 6$, and let 
\begin{equation*}
\bp := (62173845; (5,7)_{\SB}, (2,7)_{\SB}, (3,7)_{\SB}, (4,6)_{\SB}, (1,6)_{\SB}). 
\end{equation*}
\begin{enu}
\item Assume that $d = 7$. Let us compute $\bp \leftarrow (6,7)$. 
Running {\bf Algorithm $(\bp_{(*,6)}:(6,7))$} for $\bp$, we obtain 
\begin{equation*}
\bp_{1} = ( 62173845 \,;\, \underbrace{(5,7)_{\SB}, (2,7)_{\SB}, (3,7)_{\SB}}_{=\, \bp_{(*,7)}}, 
\underbrace{(6,7)_{\SQ}, (4,7)_{\SB}, (1,7)_{\SB}}_{=: \, \bs}); 
\end{equation*}
in particular, $\bp_{1}$ is of the form \eqref{eq:Ia}. 
Since the segment $\bs$ is to the right of $\bp_{(*,d)} =\bp_{(*,7)}$, 
it follows that $\bp \leftarrow (6,7)$ is identical to 
the directed path above. 

\item Assume that $d = 9$. Let us compute $\bp \leftarrow (6,9)$. 
Running {\bf Algorithm $(\bp_{(*,6)}:(6,9))$} for $\bp$, we obtain 
\begin{equation*}
\bp_{1} = ( 62173845 \,;\, \underbrace{(5,7)_{\SB}, (2,7)_{\SB}, (3,7)_{\SB}}_{=\, \bp_{(*,7)}}, 
\underbrace{(6,9)_{\SQ}, (4,9)_{\SB}, (1,9)_{\SB}}_{=: \, \bs}); 
\end{equation*}
in particular, $\bp_{1}$ is of the form \eqref{eq:Ia}. 
We move the segment $\bs$ directly to the right of $\bp_{(*,d)} = \bp_{(*,9)}=\emptyset$, 
and obtain the directed path
\begin{equation*}
(\bp \leftarrow (6,9)) = ( 62173845 \,;\, 
(6,9)_{\SQ}, (4,9)_{\SB}, (1,9)_{\SB}, (5,7)_{\SB}, (2,7)_{\SB}, (3,7)_{\SB}).
\end{equation*}
\end{enu}
\end{ex}

\paragraph{\bf Case 3.}
%
Assume that $\bp_{1}$ is of the form \eqref{eq:Ib}; 
note that $n_{(k,*)}(\bp) = 0$ in this case. 
By the same argument as for Claim~\ref{c:ins1}, we deduce that 
%
%
\begin{equation} \label{eq:Ibc1}
(i_{u},l) \notin \bp_{(*,l)} \quad 
\text{for any $t \le u \le s$ and $k+1 \le l < d$}.
\end{equation}
By Lemma~\ref{lem:int}\,(1) and \eqref{eq:Ibc1}, 
we can move $(i_{t},d), (i_{t+1},d),\dots,(i_{s},d)$ 
directly to the right of $\bp_{(*,d)}$ as follows: 
\begin{align*}
& (w\,;\,\dots\dots,\bp_{(*,d+1)},
   \overbrace{%
   \bp_{(*,d)},(i_{t},d), (i_{t+1},d),\dots,(i_{s},d)}^{\text{the $(*,d)$-segment of this directed path}},
   \bp_{(*,d-1)},\dots \\ 
& \qquad \dots,\bp_{(*,k+1)},(i_{1},k),\dots,(i_{t-1},k),(i_{t},k)); 
\end{align*}
we call the procedure, which assigns $\bp \leftarrow (k,d)$ to $\bp$, an insertion. 

\begin{clm} \label{c:ins2}
The directed path $\bp':= (\bp \leftarrow (k,d))$ satisfies 
{\rm(P0)'}, {\rm (P1)'}, {\rm (P2)'}, 
with $n_{(k,*)}(\bp') = 0$. 
\end{clm}

\noindent
{\it Proof of Claim~\ref{c:ins2}.} 
It is obvious that $n_{(k,*)}(\bp') = 0$, and 
$\bp'$ satisfies (P0)' and (P1)'. 
Also, if $\bp_{(*,d)} = \emptyset$, then 
it is obvious that $\bp'$ satisfies (P2)'. 
Assume that $\bp_{(*,d)} \ne \emptyset$. 
By Lemma~\ref{lem:noak}, we deduce that 
$(i_{u},d) \notin \bp_{(*,d)}$ for any $p \le u \le s$. 
Let $(i,d)$ be the final label of $\bp_{(*,d)}$, and 
assume that $(i,d)$ is applied to $v \in W$. Then we see that 
\begin{align*}
(v\,;\, & (i,d), (i_{t},d), (i_{t+1},d),\dots,(i_{s},d), \\
& \bp_{(*,d-1)},\dots\dots,\bp_{(*,k+1)},(i_{1},k),\dots,(i_{t-1},k),(i_{t},k))
\end{align*}
is an element of $\SP^{k-1}(v)$. By Lemma~\ref{lem:LS29}\,(2), applied to 
the first, second, and last label of the directed path above, we deduce that 
$i < i_{t}$. Hence we conclude that $\bp'$ satisfies (P2)', 
as desired. This proves the claim. 
\nolinebreak[4] \bqed
%
%
\begin{ex} \label{ex:ins3}
Assume that $k = 4$ and $d = 5$. 
\begin{enu}
\item Let 
$\bp := ( 32514 \,;\, (1,5)_{\SB}, (3,4)_{\SQ}, (1,4)_{\SB}, (2,4)_{\SB})$. 
Let us compute $\bp \leftarrow (4,5)$. 
Running {\bf Algorithm $(\bp_{(*,4)}:(4,5))$} for $\bp$, we obtain 
\begin{equation*}
\bp_{1} = ( 32514 \,;\, (1,5)_{\SB}, (3,4)_{\SQ}, (1,4)_{\SB}, (2,5)_{\SB}, (2,4)_{\SB}); 
\end{equation*}
in particular, $\bp_{1}$ is of the form \eqref{eq:Ib}. 
We move $(2,5)_{\SB}$ to the right of $\bp_{(*,d)} = \bp_{(*,5)}$ as follows: 
\begin{equation*}
(\bp \leftarrow (4,5)) = 
( 32514 \,;\, (1,5)_{\SB}, (2,5)_{\SB}, (3,4)_{\SQ}, (1,4)_{\SB}, (2,4)_{\SB}). 
\end{equation*}

\item  Let 
$\bp := ( 32514 \,;\, (3,4)_{\SQ}, (1,4)_{\SQ}, (2,4)_{\SB})$. 
Let us compute $\bp \leftarrow (4,5)$. 
Running {\bf Algorithm $(\bp_{(*,4)}:(4,5))$} for $\bp$, we obtain 
\begin{equation*}
\bp_{1} = ( 32514 \,;\, (3,4)_{\SQ}, (1,5)_{\SB}, (1,4)_{\SB}, (2,5)_{\SB}); 
\end{equation*}
in particular, $\bp_{1}$ is of the form \eqref{eq:Ib}. 
We move $(1,5)_{\SB}$ and $(2,5)_{\SB}$ to 
the right of $\bp_{(*,d)} = \bp_{(*,5)} = \emptyset$ as follows: 
\begin{equation*}
(\bp \leftarrow (4,5)) = 
( 32514 \,;\, (1,5)_{\SB}, (2,5)_{\SB}, (3,4)_{\SQ}, (1,4)_{\SB}). 
\end{equation*}
\end{enu}
\end{ex}

%
\subsection{Deletion.}
\label{subsec:deletion}

Let $k \ge 1$. 
Let $\bp$ be a directed path starting at $w \in S_{\infty}$ and 
satisfying conditions (P0)', (P1)', and (P2)'. 
In addition, we assume that $\bp$ satisfies the following condition: 
\begin{enu}
\item[(P3)'] 
If $n_{(k,*)}(\bp) = 0$, 
then $\kappa(\bp)=(a,k)$ for some $1 \le a \le k-1$, 
and $n_{(a,*)}(\bp) \ge 2$. 
\end{enu}
Now we define $d(\bp) \ge k+1$, and a directed path 
$\bp \rightarrow (k,d(\bp))$ as follows. 

\paragraph{\bf Case 1.}
%
Assume that $n_{(k,*)}(\bp) \ge 1$; 
recall from (P1)' that $n_{(*,k)}(\bp)=0$ in this case. 
We define
\begin{equation}
d(\bp):=\min \{ d \ge k+1 \mid (k,d) \in \bp \}. 
\end{equation}
We write $\bp_{(*,d(\bp))}$ as: 
\begin{equation*}
(w\,;\, \dots,\bp_{(*,d(\bp)+1)},
\underbrace{\bs,(k,d(\bp)), (i_{1},d(\bp)),\dots,(i_{s},d(\bp))}_{ =\,\bp_{(*,d(\bp))} },
  \bp_{(*,d(\bp)-1)},\dots,\bp_{(*,k+1)}), 
\end{equation*}
with $s \ge 0$. Note that 
$(k,l) \notin \bp_{(*,l)}$ for any $k+1 \le l \le d(\bp)-1$ 
by the definition of $d(\bp)$. 
For each $1 \le u \le s$, since $i_{u} < k$, 
it follows from Lemma~\ref{lem:LS29b} that 
$(i_{u},l) \notin \bp_{(*,l)}$ for any $k+1 \le l \le d(\bp)-1$.
Hence, by Lemma~\ref{lem:int}, 
we can move the segment $(k,d(\bp)), (i_{1},d(\bp)),\dots,(i_{s},d(\bp))$ 
to the end of $\bp$ as follows: 
\begin{equation*}
(w\,;\, \dots,\bp_{(*,d(\bp)+1)},
 \bs, \bp_{(*,d(\bp)-1)},\dots,\bp_{(*,k+1)},(k,d(\bp)), (i_{1},d(\bp)),\dots,(i_{s},d(\bp))). 
\end{equation*}
Then, by Lemma~\ref{lem:int}\,(1), 
\begin{equation*}
(w\,;\, \dots,\bp_{(*,d(\bp)+1)},
 \bs, \bp_{(*,d(\bp)-1)},\dots,\bp_{(*,k+1)},(i_{1},k),\dots,(i_{s},k),(k,d(\bp)))
\end{equation*}
is a directed path. We define a path $\bp \rightarrow (k,d(\bp))$ to be 
the directed path obtained from this directed path by removing the final edge labeled by $(k,d(\bp))$, and call the procedure, which assigns $\bp \rightarrow (k,d(\bp))$ to $\bp$, a deletion;
observe that the resulting directed path $\bp \rightarrow (k,d(\bp))$ satisfies 
(P0)', (P1)', (P2)'. In addition, we see that 
$\bp \rightarrow (k,d(\bp))$ and $(k,d(\bp))$ satisfy (C1), (C2), (C3). 
Thus, the directed path $(\bp \rightarrow (k,d(\bp))) \leftarrow (k,d(\bp))$ is defined;
we deduce that a directed path of the form \eqref{eq:Ia} appears in the procedure, and that the resulting directed path is 
identical to $\bp$. Conversely, assume that $\bp$ and $(k,d)$ satisfy 
(P0)', (P1)', (P2)' and (C1), (C2), (C3), and that a directed path of the form \eqref{eq:Ia} appears 
in the insertion procedure for obtaining $\bp \leftarrow (k,d)$. We see that the resulting directed path 
$\bp \leftarrow (k,d)$ satisfies (P0)', (P1)', (P2)', (P3)', 
and that $n_{(k,*)}(\bp \leftarrow (k,d)) = 1$. 
Also, it is easily verified that 
$d(\bp \leftarrow (k,d)) = d$, and 
$((\bp \leftarrow (k,d)) \rightarrow (k,d)) = \bp$. 
%
%
\begin{ex} \label{ex:del1}
Assume that $k=6$, $d=9$, and let
\begin{equation*}
\bp':=( 62173845 \,;\, 
(6,9)_{\SQ}, (4,9)_{\SB}, (1,9)_{\SB}, (5,7)_{\SB}, (2,7)_{\SB}, (3,7)_{\SB}),
\end{equation*}
which is the directed path $\bp \leftarrow (6,9)$ in Example~\ref{ex:ins2}\,(2). 
Let us compute $\bp' \rightarrow (6,9)$. 
We have $d(\bp')=9$, and $s = 2$, $i_{1}=4$, $i_{2}=5$. 
By using Lemma~\ref{lem:int}, 
we move the segment $(6,9)_{\SQ}, (4,9)_{\SB}, (1,9)_{\SB}$ to the end of $\bp$ as follows:
\begin{equation*}
( 62173845 \,;\, 
(5,7)_{\SB}, (2,7)_{\SB}, (3,7)_{\SB}, (6,9)_{\SQ}, (4,9)_{\SB}, (1,9)_{\SB}). 
\end{equation*}
Then, by using Lemma~\ref{lem:int}\,(1), we obtain the following directed path: 
\begin{equation*}
( 62173845 \,;\, 
(5,7)_{\SB}, (2,7)_{\SB}, (3,7)_{\SB}, (4,6)_{\SB}, (1,6)_{\SB}, (6,9)_{\SQ}). 
\end{equation*}
By removing $(6,9)_{\SQ}$, we get
\begin{equation*}
(\bp' \rightarrow (6,9)) = ( 62173845 \,;\, 
(5,7)_{\SB}, (2,7)_{\SB}, (3,7)_{\SB}, (4,6)_{\SB}, (1,6)_{\SB}), 
\end{equation*}
which is the directed path $\bp$ in Example~\ref{ex:ins2}. 
\end{ex}

\paragraph{\bf Case 2.}
%
Assume that $n_{(k,*)}(\bp) = 0$; 
recall from (P3)' that 
$\kappa(\bp)=(a,k)$ for some $1 \le a \le k-1$, 
and that $n_{(a,*)}(\bp) \ge 2$ in this case. We define
\begin{equation}
d(\bp):=\min \{ d \ge k+1 \mid (a,d) \in \bp \}. 
\end{equation}
We write $\bp$ as: 
\begin{equation*}
\bp = (w\,;\, \dots, 
 \underbrace{\bs,(a,d(\bp)),(j_{1},d(\bp)),\dots,(j_{t},d(\bp))}_{ =\,\bp_{(*,d(\bp))} },
 \dots\dots,\underbrace{(i_{1},k),\dots,(i_{s},k),(a,k)}_{ =\,\bp_{(*,k)} }),
\end{equation*}
where $s, t \ge 0$. It follows from Lemma~\ref{lem:LS217b} that 
$(j_{u},d) \notin \bp_{(*,d)}$ for any $1 \le u \le t$ and 
$k \le d < d(\bp)$. Hence, by Lemma~\ref{lem:int}, 
we can move the segment $(a,d(\bp)),(j_{1},d(\bp)),\dots,(j_{t},d(\bp))$ as follows: 
\begin{equation*}
(w \,;\, \dots, \bs, \dots\dots, (i_{1},k),\dots,(i_{s},k), 
(a,d(\bp)),(a,k),(j_{1},d(\bp)),\dots,(j_{t},d(\bp))). 
\end{equation*}
Then, by using Lemma~\ref{lem:int}\,(3) and (2), 
we deduce that 
\begin{equation*}
(w\,;\, \dots, \bs, \dots\dots, 
  \underbrace{(i_{1},k),\dots,(i_{s},k),(a,k),}_{=\,\bp_{(*,k)}} \\
 (j_{1},k),\dots,(j_{t},k),\,(k,d(\bp)))
\end{equation*}
is a directed path. Now we define a path $\bp \rightarrow (k,d(\bp))$ to be 
the directed path obtained from this directed path by removing the final edge 
labeled by $(k,d(\bp))$, and call the procedure, which assigns $\bp \rightarrow (k,d(\bp))$ to $\bp$, a deletion.
As in Case 1, we deduce that 
$((\bp \rightarrow (k,d(\bp))) \leftarrow (k,d(\bp)))=\bp$ and 
$((\bp \leftarrow (k,d)) \rightarrow (k,d)) = \bp$. 
%
%
\begin{ex} \label{ex:del2}
Assume that $k=4$, $d = 5$, and let 
\begin{equation*}
\bp' : = 
( 32514 \,;\, (1,5)_{\SB}, (2,5)_{\SB}, (3,4)_{\SQ}, (1,4)_{\SB}, (2,4)_{\SB}), 
\end{equation*}
which is the directed path $\bp \leftarrow (4,5)$ in Example~\ref{ex:ins3}\,(1). 
Let us compute $\bp' \rightarrow (4,5)$. We have $a=2$, $d(\bp') = 5$, and $t=0$. 
By using Lemma~\ref{lem:int}, we move $(2,5)_{\SB}$ as follows: 
\begin{equation*}
( 32514 \,;\, (1,5)_{\SB}, (3,4)_{\SQ}, (1,4)_{\SB}, (2,5)_{\SB}, (2,4)_{\SB}). 
\end{equation*}
Using Lemma~\ref{lem:int}\,(3) (in this case, we do not need to use Lemma~\ref{lem:int}\,(2)), 
we obtain the following directed path: 
\begin{equation*}
( 32514 \,;\, (1,5)_{\SB}, (3,4)_{\SQ}, (1,4)_{\SB}, (2,4)_{\SB}, (4,5)_{\SB}). 
\end{equation*}
By removing $(4,5)_{\SB}$, we get
\begin{equation*}
(\bp' \rightarrow (4,5)) = 
( 32514 \,;\, (1,5)_{\SB}, (3,4)_{\SQ}, (1,4)_{\SB}, (2,4)_{\SB}), 
\end{equation*}
which is the directed path $\bp$ in Example~\ref{ex:ins3}\,(1). 
\end{ex}

%
\section{Examples.}
\label{sec:example}

Recall the notation from Example~\ref{ex:algo}.

\begin{ex}[{cf. \cite[Example~7.4]{LM}}] \label{ex1}
Let us compute $\FG_{321}^{\Q}\FG_{231}^{\Q} = \FG_{321}^{\Q}\G{2}{2}$ 
by using Theorem~\ref{thm:pieri}. We can verify that 
the set $\SP^{2}(w)$ for $w = 321$ consists of the following 14 elements:
\begin{equation*}
\begin{array}{c|c|c}
\bp & \Mark_{2}(\bp) & \ed(\bp) \\ \hline\hline
(w \,;\, \emptyset) & \emptyset & 321 \\ \hline
(w \,;\, (1,4)_{\SB}) & \emptyset & 4213 \\ \hline
(w \,;\, (1,4)_{\SB}, (2,4)_{\SB}) & \{(1,4),(2,4)\} & 4312 \\ \hline
(w \,;\, (1,4)_{\SB}, (2,4)_{\SB}, (1,3)_{\SQ}) & \{(1,4),(2,4)\} & 1342 \\ \hline
(w \,;\, (1,4)_{\SB}, (2,4)_{\SB}, (1,3)_{\SQ}, (2,3)_{\SB}) & \{(1,4),(2,4)\} & 1432 \\ \hline
(w \,;\, (1,4)_{\SB}, (2,4)_{\SB}, (2,3)_{\SQ}) & \{(1,4),(2,4)\} & 4132 \\ \hline
(w \,;\, (1,4)_{\SB}, (1,3)_{\SQ}) & \emptyset & 1243 \\ \hline
(w \,;\, (1,4)_{\SB}, (1,3)_{\SQ}, (2,3)_{\SB}) & \{(1,4),(2,3)\} & 1423 \\ \hline
(w \,;\, (1,4)_{\SB}, (2,3)_{\SQ}) & \{(1,4),(2,3)\} & 4123 \\ \hline
(w \,;\, (2,4)_{\SB}) & \emptyset & 3412 \\ \hline
(w \,;\, (2,4)_{\SB},(2,3)_{\SQ}) & \emptyset & 3142 \\ \hline
(w \,;\, (1,3)_{\SQ}) & \emptyset & e \\ \hline
(w \,;\, (1,3)_{\SQ}, (2,3)_{\SB}) & \{(1,3),(2,3)\} & 132 \\ \hline
(w \,;\, (2,3)_{\SQ}) & \emptyset & 312 
\end{array}
\end{equation*}
Therefore, $\sfp{2}{2}$ (and $\hsp{2}{2}$) consists of $7$ elements, and so it follows that
\begin{align*}
\FG_{321}^{\Q}\FG_{231}^{\Q} = \,& 
\FG_{4312}^{\Q}-\Q_1\Q_2\FG_{1342}^{\Q}+\Q_1\Q_2\FG_{1432}^{\Q}-\Q_{2}\FG_{4132}^{\Q} \\
& - \Q_1\Q_2\FG_{1423}^{\Q} + \Q_2\FG_{4123}^{\Q} + \Q_1\Q_2\FG_{132}^{\Q}. 
\end{align*}
\end{ex}

\begin{ex} \label{ex2}
Let us compute $\FG_{32514}^{\Q}\FG_{1342}^{\Q} = \FG_{32514}^{\Q}\G{3}{2}$ 
by using Theorem~\ref{thm:pieri}. We can verify that 
the set $\SP^{3}(w)$ for $w = 32514$ consists of the following 26 elements:
\begin{equation*}
\begin{array}{c|c|c}
\bp & \Mark_{2}(\bp) & \ed(\bp) \\ \hline\hline
(w \,;\, \emptyset) & \emptyset & 32514 \\ \hline
(w \,;\, (3,6)_{\SB}) & \emptyset & 326145 \\ \hline
(w \,;\, (3,6)_{\SB}, (1,5)_{\SB}) & \{(3,6),(1,5)\} & 426135 \\ \hline
(w \,;\, (3,6)_{\SB}, (1,5)_{\SB}, (2,5)_{\SB}) & \{(3,6),(1,5)\},\{(3,6),(2,5)\} & 436125 \\ \hline
(w \,;\, (3,6)_{\SB}, (1,5)_{\SB}, (2,5)_{\SB}, (3,4)_{\SQ}) & \{(3,6),(1,5)\},\{(3,6),(2,5)\} & 431625 \\ \hline
(w \,;\, (3,6)_{\SB}, (1,5)_{\SB}, (3,4)_{\SQ}) & \{(3,6),(1,5)\} & 421635 \\ \hline
(w \,;\, (3,6)_{\SB}, (2,5)_{\SB}) & \{(3,6),(2,5)\} & 346125 \\ \hline
(w \,;\, (3,6)_{\SB}, (2,5)_{\SB}, (3,4)_{\SQ}) & \{(3,6),(2,5)\} & 341625 \\ \hline
(w \,;\, (3,6)_{\SB}, (3,4)_{\SQ}) & \emptyset & 321645 \\ \hline
(w \,;\, (1,5)_{\SB}) & \emptyset & 42513 \\ \hline
(w \,;\, (1,5)_{\SB}, (2,5)_{\SB}) & \{(1,5),(2,5)\} & 43512 \\ \hline
(w \,;\, (1,5)_{\SB}, (2,5)_{\SB}, (3,4)_{\SQ}) & \{(1,5),(2,5)\},\{(1,5),(3,4)\} & 43152 \\ \hline
(w \,;\, (1,5)_{\SB}, (2,5)_{\SB}, (3,4)_{\SQ}, (1,4)_{\SB}) & \{(1,5),(3,4)\} & 53142 \\ \hline
(w \,;\, (1,5)_{\SB}, (2,5)_{\SB}, (3,4)_{\SQ}, (1,4)_{\SB},(2,4)_{\SB}) & \{(1,5),(3,4)\} & 54132 \\ \hline
(w \,;\, (1,5)_{\SB}, (2,5)_{\SB}, (3,4)_{\SQ}, (2,4)_{\SB}) & \{(1,5),(3,4)\} & 45132 \\ \hline
(w \,;\, (1,5)_{\SB}, (3,4)_{\SQ}) & \{(1,5),(3,4)\} & 42153 \\ \hline
(w \,;\, (1,5)_{\SB}, (3,4)_{\SQ}, (1,4)_{\SB}) & \{(1,5),(3,4)\} & 52143 \\ \hline
(w \,;\, (1,5)_{\SB}, (3,4)_{\SQ}, (1,4)_{\SB}, (2,4)_{\SB}) & \{(1,5),(3,4)\} & 54123 \\ \hline
(w \,;\, (1,5)_{\SB}, (3,4)_{\SQ}, (2,4)_{\SB}) & \{(1,5),(3,4)\} & 45123 \\ \hline
(w \,;\, (2,5)_{\SB}) & \emptyset & 34512 \\ \hline
(w \,;\, (2,5)_{\SB}, (3,4)_{\SQ}) & \{(2,5),(3,4)\} & 34152 \\ \hline
(w \,;\, (2,5)_{\SB}, (3,4)_{\SQ}, (2,4)_{\SB}) & \{(2,5),(3,4)\} & 35142 \\ \hline
(w \,;\, (3,4)_{\SQ}) & \emptyset & 32154 \\ \hline
(w \,;\, (3,4)_{\SQ}, (1,4)_{\SB}) & \{(3,4),(1,4)\} & 52134 \\ \hline
(w \,;\, (3,4)_{\SQ}, (1,4)_{\SB}, (2,4)_{\SB}) & \{(3,4),(1,4)\} & 53124 \\ \hline
(w \,;\, (3,4)_{\SQ}, (2,4)_{\SB}) & \{(3,4),(2,4)\} & 35124
\end{array}
\end{equation*}
Therefore, $\sfp{3}{2}$ consists of $20$ elements, and so it follows that
\begin{align*}
\FG_{32514}^{\Q}\FG_{1342}^{\Q} = \,& 
\FG_{426135}^{\Q} - 2 \FG_{436125}^{\Q} + 
2 \Q_{3} \FG_{431625}^{\Q}  - 
\Q_{3} \FG_{421635}^{\Q}  + 
\FG_{346125}^{\Q}  - 
\Q_{3} \FG_{341625}^{\Q} \\
& + \FG_{43512}^{\Q}  
  - 2 \Q_{3} \FG_{43152}^{\Q}
  + \Q_{3} \FG_{53142}^{\Q}
  - \Q_{3} \FG_{54132}^{\Q}
  + \Q_{3} \FG_{45132}^{\Q} \\
& + \Q_{3} \FG_{42153}^{\Q}
  - \Q_{3} \FG_{52143}^{\Q}
  + \Q_{3} \FG_{54123}^{\Q}
  - \Q_{3} \FG_{45123}^{\Q} \\
& + \Q_{3} \FG_{34152}^{\Q}
  - \Q_{3} \FG_{35142}^{\Q}
  + \Q_{3} \FG_{52134}^{\Q}
  - \Q_{3} \FG_{53124}^{\Q}
  + \Q_{3} \FG_{35124}^{\Q}. 
\end{align*}
\end{ex}

%


\begin{thebibliography}{LNSXX}

\bibitem[ACT]{ACT}
D. Anderson, L. Chen, and H.-H. Tseng,
On the finiteness of quantum $K$-theory of a homogeneous space, 
{\it Int. Math. Res. Not.} {\bf 2022} (2022), 1313--1349.

\bibitem[BFP]{BFP}
F. Brenti, S. Fomin, A. Postnikov, 
Mixed Bruhat operators and Yang-Baxter equations for Weyl groups, 
{\it Int. Math. Res. Not.} {\bf 1999} (1999), no.~8, 419--441.

\bibitem[FGP]{FGP}
S. Fomin, S. Gelfand, and A. Postnikov, 
Quantum Schubert polynomials, 
{\it J. Amer. Math. Soc.} {\bf 10} (1997), 565--596.

\bibitem[Giv]{Giv} 
A. Givental, 
On the WDVV equation in quantum $K$-theory, 
{\it Michigan Math. J.} {\bf 48} (2000), 295--304.

\bibitem[K1]{Kat1} 
S. Kato, 
Loop structure on equivariant $K$-theory of semi-infinite flag manifolds, 
arXiv:1805.01718.

\bibitem[K2]{Kat2} 
S. Kato, 
Frobenius splitting of Schubert varieties of semi-infinite flag manifolds, 
{\it Forum Math., Pi} {\bf 9} (2021), Paper No.\,e56.







\bibitem[LaS]{LaS3}
A. Lascoux and M.-P. Sch\"utzenberger, 
Symmetry and flag manifolds, Invariant theory (Montecatini, 1982), 
118--144, Lecture Notes in Math. Vol.~996, Springer, Berlin, 1983. 

\bibitem[Lee]{Lee}
Y.-P. Lee, Quantum $K$-theory. I. Foundations, 
{\it Duke Math. J.} {\bf 121} (2004), 389--424.

\bibitem[Len]{Len}
C. Lenart, From Macdonald polynomials to a charge statistic beyond type $A$, 
{\it J. Combin. Theory Ser. A} {\bf 119} (2012), 683--712. 

\bibitem[LeM]{LM}
C. Lenart and T. Maeno, 
Quantum Grothendieck Polynomials, 
arXiv: math/0608232. 

\bibitem[LeNS$^{3}$]{LNSSS1}
C. Lenart, S. Naito, D. Sagaki, A. Schilling, and M. Shimozono, 
A uniform model for Kirillov-Reshetikhin crystals I: 
Lifting the parabolic quantum Bruhat graph, 
{\it Int. Math. Res. Not.} {\bf 2015} (2015), 1848--1901. 

\bibitem[LNS]{LNS}
C. Lenart, S. Naito, and D. Sagaki, 
A general Chevalley formula for semi-infinite flag manifolds and quantum $K$-theory, 
{\it Selecta Math. (N.S.)} {\bf 30} (2024), Paper No.\,39.



\bibitem[LeS]{LS}
C. Lenart and F. Sottile, 
A Pieri-type formula for the $K$-theory of a flag manifold, 
{\it Trans. Amer. Math. Soc.} {\bf 359} (2007), 2317--2342.

\bibitem[MNS1]{MNS1}
T. Maeno, S. Naito, and D. Sagaki, 
A presentation of the torus-equivariant quantum $K$-theory ring of flag 
manifolds of type $A$, Part I: the defining ideal, 
arXiv:2302.09485.

\bibitem[MNS2]{MNS2}
T. Maeno, S. Naito, and D. Sagaki, 
A presentation of the torus-equivariant quantum $K$-theory ring of flag 
manifolds of type $A$, Part II: quantum double Grothendieck polynomials, 
arXiv:2305.17685.

\bibitem[P]{P}
A. Postnikov, 
On a quantum version of Pieri's formula, 
In {\it Advances in Geometry}, Progr. Math. Vol.~172, 
pp.371--383, Birkh\"auser Boston, Boston, MA, 1999.

\end{thebibliography}
\end{document}